\newcommand{\one}{\mathbbm{1}}
\newcommand{\R}{\mathbb{R}}
\newcommand{\N}{\mathbb{N}}
\newcommand{\cc}{\mathscr{C}}
\newcommand{\Ls}{\mathcal{L}}
\newcommand{\Mcal}{\mathcal{M}}
\newcommand{\id}{\mathop{\textup{Id}}}
\newcommand{\veps}{\varepsilon}
\newcommand{\Qast}{Q^\ast\hspace{-2pt}}
\newcommand{\inner}[1]{\langle #1\rangle}
\newcommand{\overbar}[1]{\mkern 1.0mu \overline{\mkern-1.0mu#1\mkern-1.0mu}\mkern 1.0mu}
\newcommand{\Xint}[1]{\mathchoice
{\XXint\displaystyle\textstyle{#1}}%
{\XXint\textstyle\scriptstyle{#1}}%
{\XXint\scriptstyle\scriptscriptstyle{#1}}%
{\XXint\scriptscriptstyle\scriptscriptstyle{#1}}%
\!\int}
\newcommand{\XXint}[3]{{\setbox0=\hbox{$#1{#2#3}{\int}$}
\vcenter{\hbox{$#2#3$}}\kern-.5\wd0}}
\newtheorem{algorithm}{Algorithm}
\begin{document}

\title{Convergence and regularization for monotonicity-based shape reconstruction in electrical impedance tomography
}

\titlerunning{Convergence and regularization for monotonicity-based EIT shape reconstruction}        

\author{Henrik Garde         \and
        Stratos Staboulis 
}


\institute{H. Garde \and S. Staboulis \at
              Department of Applied Mathematics and Computer Science\\
              Technical University of Denmark\\ 
              2800 Kgs. Lyngby, Denmark  \\
              \email{ssta@dtu.dk}           
           \and
           H. Garde \\
           \email{hgar@dtu.dk}
}


\maketitle

\begin{abstract}
The inverse problem of electrical impedance tomography is severely ill-posed, meaning that, only limited information about the conductivity can in practice be recovered from boundary measurements of electric current and voltage. Recently it was shown that a simple monotonicity property of the related Neumann-to-Dirichlet map can be used to characterize shapes of inhomogeneities in a known background conductivity. In this paper we formulate a monotonicity-based shape reconstruction scheme that applies to approximative measurement models, and regularizes against noise and modelling error. We demonstrate that for admissible choices of regularization parameters the inhomogeneities are detected, and under reasonable assumptions, asymptotically exactly characterized. Moreover, we rigorously associate this result with the complete electrode model, and describe how a computationally cheap monotonicity-based reconstruction algorithm can be implemented. Numerical reconstructions from both simulated and real-life measurement data are presented.
\keywords{electrical impedance tomography \and inverse problems \and monotonicity method \and regularization \and complete electrode model \and direct reconstruction methods}
\subclass{35R30 \and 35Q60 \and 35R05 \and 65N21}
\end{abstract}

\section{Introduction}

In {\em electrical impedance tomography} (EIT), the aim is to extract information about the internal properties of a physical object by external measurements of electric current and voltage. In practice, through a set of surface electrodes, currents of prescribed magnitudes are conducted into the object and the voltages needed for maintaining the currents are recorded. The obtained current-voltage data are used for imaging the internal electrical conductivity distribution of the object. Examples of EIT applications include, among others, monitoring patient lung function, control of industrial processes, non-destructive testing of materials, and locating mineral deposits \cite{Borcea2002a,Cheney1999,Hanke2003,Uhlmann2009}.

In EIT, electricity inside the domain is modelled by the {\em conductivity equation}
\begin{equation}\label{eq:intro-eq}
\nabla \cdot (\gamma \nabla u) = 0, \quad {\rm in} \ \, \Omega
\end{equation}
where $\Omega \subset \R^n$ describes the spatial dimensions of the object, $\gamma = \gamma(x)$ the conductivity (or admittivity) distribution, and $u = u(x)$ the potential of the electric field. The ideal data obtainable by current-voltage measurements are characterized by the {\em Neumann-to-Dirichlet operator} $\Lambda(\gamma)$ which relates a boundary current density to the corresponding boundary potential through a Neumann-problem for \eqref{eq:intro-eq}. 

The {\em inverse conductivity problem} is stated as {``given $\Lambda(\gamma)$, determine $\gamma$''}. The problem has been extensively studied during the past decades \cite{Calder'on1980,Nachman1996,Uhlmann2009} --- the known solvability conditions depend on the spatial dimension. In the plane, a general bounded real-valued $\gamma$ with a positive lower bound is uniquely determined by $\Lambda(\gamma)$ as long as $\Omega$ is simply connected \cite{Astala2006a}. In the three-dimensional space, more regularity of $\Omega$ and $\gamma$ is in general required \cite{Sylvester1987}. Despite the unique solvability, the mapping $\Lambda(\gamma) \mapsto \gamma$ has points of discontinuity in the $L^\infty$-norm \cite{Borcea2002a}. Therefore, the problem of reconstructing a general $\gamma$ from $\Lambda(\gamma)$ is ill-posed in the sense of Hadamard. 

A practical version of the inverse conductivity problem is {``given a noisy and discrete approximation of $\Lambda(\gamma)$, reconstruct information about $\gamma$''}. Arguably, the most flexible framework for computational EIT reconstruction is provided by iterative output least-squares type methods \cite{Cheney1990a,Heikkinen02,Lechleiter2006,Vauhkonen1998a} which work with realistic measurement models and allow incorporation of prior information into the model. However, in many applications, full-scale imaging may not be the top priority --- especially if it is computationally expensive due to high dimensionality of the computational domain. Instead, one may be interested in locating conductivity inhomogeneities in a known and/or uninteresting background medium. A variety of iterative \cite{Chung2005,Hyvonen2010b} and direct inhomogeneity detection methods have been introduced and elaborately studied. Two of the most prominent direct EIT methods relevant to this work are {\em the factorization method} \cite{Bruhl2001,Bruhl2000,Lechleiter2008a} and {\em the enclosure method} \cite{Ikehata1999a,Ikehata2000c}, which rely on repetitive but computationally very cheap testing of criteria that theoretically characterize shapes of conductivity inhomogeneities. 

In this paper we study the {\em monotonicity method}, the idea of which originates in \cite{Tamburrino2006,Tamburrino2002}. This direct method detects conductivity inhomogeneities by utilizing the fact that the forward map $\gamma \mapsto \Lambda(\gamma)$ is monotonically decreasing. To describe the method's main idea, suppose for simplicity that $\Omega$ has a smooth boundary and that the target conductivity is of form $\gamma = 1 + \chi_D$ where $D \subseteq \Omega$ is an unknown open set we would like to reconstruct. The monotonicity property implies that if a ball $B \subseteq D$ then $\Lambda(1 + \chi_B)-\Lambda(\gamma) \geq 0$ in the sense of semidefiniteness. Therefore, the collection $\Mcal$ of all open balls satisfying the latter criterion forms a cover for $D$. Recently, it was shown that, if $D$ has no holes, the converse holds: $\cup \Mcal$ coincides with $D$. Moreover, the result remains valid even if $\Lambda(1 + \chi_B)$ is replaced with the affine approximation $\Lambda(1) + \tfrac{1}{2}\Lambda'(1)\chi_B$ (note the factor $\tfrac{1}{2}$) regardless of the possibly large linearization error \cite{Harrach13}. The affine formulation is numerically tempting as the Fr\'echet derivative can be calculated ahead of time and applied by evaluating computationally cheap matrix-vector products. 

A key property of the monotonicity method is that it can be trivially formulated using the realistic {\em complete electrode model} (CEM) --- just by replacing $\Lambda(\gamma)$ by its CEM counterpart. The feasibility of the CEM-based monotonicity method was first considered in \cite{Harrach15}. By suitably relaxing the monotonicity test, it was shown that a non-trivial upper bound for $D$ can be obtained from inaccurate CEM data. In this paper, we consider the converse, that is, whether a sequence of suitably regularized reconstructions $\Mcal_\alpha$ in some sense converges to a limit as the discretization error and measurement noise level tend to zero. We study this question by extending the ideas of \cite{Harrach13} where {\em $B$-dependent} regularization of the monotonicity method is discussed.

Our main theoretical result (Theorem~\ref{thm:h-mono}) states that, as a suitable sequence of regularization parameters tends to zero, the noisy and discrete semidefiniteness test converges {\em uniformly} in $B$ to the continuum counterpart. As a corollary, we obtain the set-theoretic convergence of the regularized reconstruction $\Mcal_\alpha$ to the idealistic $\Mcal$. To be more precise, we formulate the above results in terms of a certain class of admissible approximate models for $\Lambda(\gamma)$. However, we also rigorously show how the results apply to the CEM by constructing approximate sequences for a Fr\'echet derivative of $\gamma\mapsto \Lambda(\gamma)$ of arbitrary order (Theorems~\ref{thm:cem-cm}--\ref{thm:cem-cm-lin} and Remark~\ref{rem:cem-cm-lin}). The approximation technique generalizes the ideas of \cite{Hyvonen09} which studies the CEM in the limit as the number of electrodes grows towards infinity. All in all, this paper is thematically comparable to \cite{Lechleiter2008a} where a rigorous asymptotic connection between the factorization method and the CEM is established.  

Before describing the structure of this paper, let us make some general comments on the extensions and limitations of the considered reconstruction method. Compared to other direct inhomogeneity detection techniques, the monotonicity method has the advantage that the inhomogeneity characterization results naturally extend to the {\em indefinite case} \cite{Harrach13} (both jumps and drops off the background conductivity). Although fast and easy to implement, a drawback of the standard implementation is that it requires the knowledge of the background conductivity. Fortunately, as in the context of the factorization method \cite{Harrach2009}, this problem can be partly avoided if A/C measurements with different frequencies are available. The appropriate analysis is carried out in \cite{Harrach2015} where the monotonicity method is generalized to complex-valued admittivities.

The contents of this article are organised as follows. In section~\ref{sec:eit} the idealized continuum model (CM) of EIT is rigorously defined, and its fundamental monotonicity properties are revised. In section~\ref{sec:reg-mono} the regularized monotonicity method is formulated and the main result (Theorem~\ref{thm:h-mono}) of the paper is proven. Section~\ref{sec:cem} introduces the CEM and demonstrates how it can be used to construct approximations to the CM. Two alternative algorithmic implementations of the monotonicity method are described in section~\ref{sec:alg-impl}. Moreover, a theoretical justification for the algoritmic use of the CEM is given. Finally, in section~\ref{sec:numerical}, the CEM-based reconstruction algorithms are tested numerically in two and three spatial dimensions for simulated measurements, and in two dimensions for real-life (cylindrically symmetric) water tank measurement data. 

\section{Electrical impedance tomography based on monotonicity} \label{sec:eit}

In this section we formulate the continuum model (CM) and revise its fundamental monotonicity property that motivates the monotonicity method. For simplicity, we only consider static EIT where all electric quantities are modelled by real-numbers. Generalizations of the method for A/C measurements, that is, complex-valued quantities, can be found in \cite{Harrach2015}. 

\subsection{Continuum model} 

Let $\Omega \subset \R^n$, $n = 2$ or $3$, be a bounded domain with Lipschitz-regular boundary $\partial\Omega$. Consider the elliptic boundary value problem
\begin{equation}
\label{eq:cm}
\begin{array}{ll}
\nabla\cdot(\gamma\nabla u) = 0, & \text{ in } \Omega, \\[4pt]
\nu\cdot\gamma\nabla u = f,  & \text{ on }\partial\Omega,
\end{array}
\end{equation}
where $\nu$ denotes the outward-pointing unit normal of $\partial\Omega$ and the real-valued coefficient function $\gamma = \gamma(x)$ belongs to 
\begin{equation*}
L_+^\infty(\Omega) = \{w\in L^\infty(\Omega)\, \colon {\rm ess}\inf w > 0\}.
\end{equation*}
By the Lax--Milgram theorem, for a given Neumann-boundary datum
\begin{equation}\label{eq:H-curr}
f \in L^2_\diamond({\partial\Omega}) = \{ w\in L^2({\partial\Omega}) \, \colon \langle w,\one\rangle = 0 \},
\end{equation}
problem \eqref{eq:cm} has a unique weak solution
\begin{equation}\label{eq:H-pot}
u\in H_\diamond^1(\Omega) = \{w \in H^1(\Omega)\, \colon  \langle w|_{\partial\Omega}, \one \rangle = 0\}.
\end{equation}
Above and from here on $\langle\cdot ,\cdot\rangle$ denotes the $L^2(\partial\Omega)$-inner product, $w|_{\partial\Omega}$ stands for the trace of $w$ on $\partial\Omega$, and $\one \equiv 1$ on $\partial\Omega$. 
In electrostatics, $u$ models the electric potential in $\Omega$ induced by the electrical conductivity distribution $\gamma$ and the input current density $f$. The extra conditions in the function space definitions \eqref{eq:H-curr} and \eqref{eq:H-pot} correspond to current conservation law and choice of ground level of potential, respectively.

The idealistic infinite precision data related to electric current-voltage boundary measurements are characterized by the {\em Neumann-to-Dirichlet (ND) map}
\begin{equation*}
\Lambda(\gamma) : L^2_\diamond({\partial\Omega})\to L^2_\diamond({\partial\Omega}), \qquad f \mapsto u|_{{\partial\Omega}},
\end{equation*}
where $u$ is the solution to \eqref{eq:cm}. 
Note that by density, knowledge of $\Lambda(\gamma)$ is tantamount to knowing every boundary current-voltage density pair 
\[
(f,u|_{\partial\Omega})\in H_\diamond^{-1/2}(\partial\Omega)\times H_\diamond^{1/2}(\partial\Omega)
\] 
connected via a more general formulation of \eqref{eq:cm} for less regular input current densities. The operator $\Lambda(\gamma)$ is linear and bounded, that is, it belongs to the space $\Ls(L_\diamond^2(\partial\Omega))$ of bounded linear operators from $L_\diamond^2(\partial\Omega)$ to itself. Furthermore, it is straightforward to show that $\Lambda(\gamma)$ is self-adjoint (see e.g.\ \cite{Borcea2002a}). On the other hand, the mapping 
\[
\gamma \mapsto \Lambda(\gamma) \colon L_+^\infty(\Omega) \to \Ls(L_\diamond^2(\partial\Omega))
\] 
is non-linear since clearly $\Lambda(c\gamma) = \Lambda(\gamma)/c$ for any constant $c > 0$. Despite being non-linear, the mapping is still very regular as it is analytic in $\gamma \in L_+^\infty(\Omega)$ (cf. Appendix~\ref{append:B}). 

Although the CM is feasible in proving interesting solvability and uniqueness results, it should be emphasized that the model is not very accurate in predicting real-life measurements. This is because, in practice, point-wise boundary current densities are out of reach and realistic electrodes cause a shunting effect which the CM does not directly account for \cite{Cheng1989,Somersalo1992}. Be that as it may, the operator $\Lambda(\gamma)\in \Ls(L_\diamond^2(\partial\Omega))$ as well as its linearization is compact due to the compact embedding $H^{1/2}(\partial\Omega)\subset\subset L^2(\partial\Omega)$. Therefore, it seems natural that they can be approximated accurately by finite-dimensional electrode model-based matrices. Before returning to this question in section \ref{sec:cem}, we recapitulate the ideas behind the monotonicity method.

\subsection{Monotonicity-based characterization of inclusions}

The following principle forms the basis of the monotonicity method in EIT \cite{Harrach10,Harrach13,Tamburrino2002}. 
\begin{proposition}\label{prop:mono}
For two arbitrary conductivities $\gamma,\tilde{\gamma} \in L_+^\infty(\Omega)$ it holds
%
\begin{equation}\label{eq:mono}
\int_\Omega \frac{\tilde{\gamma}}{\gamma}(\gamma - \tilde{\gamma})|\nabla \tilde{u}|^2 dx \leq \langle (\Lambda(\tilde{\gamma}) - \Lambda(\gamma))f,f \rangle \leq \int_\Omega (\gamma - \tilde{\gamma})|\nabla \tilde{u}|^2 dx
\end{equation}
where $f\in L_\diamond^2(\partial\Omega)$ is arbitrary and $\tilde{u} \in H^1_\diamond(\Omega)$ solves 
\[
\nabla \cdot (\tilde{\gamma}\nabla \tilde{u}) = 0 \ \ {\rm in} \ \Omega, \qquad \nu\cdot\tilde{\gamma}\nabla \tilde{u} = f \ \ {\rm on} \ \partial\Omega.
\]
\end{proposition}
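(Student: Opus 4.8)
The plan is to derive both inequalities in \eqref{eq:mono} from the variational (weak) formulation of \eqref{eq:cm} for the two conductivities, together with the elementary fact that the energy bilinear form is symmetric and positive. Let $u,\tilde u\in H^1_\diamond(\Omega)$ be the weak solutions for $\gamma$ and $\tilde\gamma$ respectively, with the same Neumann datum $f$; thus $\langle \Lambda(\gamma)f,f\rangle=\int_\Omega\gamma|\nabla u|^2\,dx$ and likewise $\langle\Lambda(\tilde\gamma)f,f\rangle=\int_\Omega\tilde\gamma|\nabla\tilde u|^2\,dx$, using that the trace of $u$ against $f$ equals the energy by the weak form with test function $u$ itself (and similarly for $\tilde u$). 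The key algebraic identity I would use is the ``polarization-type'' rearrangement
\[
\int_\Omega \gamma|\nabla u|^2\,dx - \int_\Omega \tilde\gamma|\nabla\tilde u|^2\,dx
= \int_\Omega (\gamma-\tilde\gamma)|\nabla\tilde u|^2\,dx \;-\; \int_\Omega \gamma\,|\nabla u-\nabla\tilde u|^2\,dx,
\]
which one checks by expanding $|\nabla u-\nabla\tilde u|^2$ and using the weak formulation for $u$ tested against $u-\tilde u$ (the cross term $\int_\Omega\gamma\nabla u\cdot(\nabla u-\nabla\tilde u)\,dx$ vanishes since $u-\tilde u\in H^1_\diamond(\Omega)$ is an admissible test function). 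Since $\gamma>0$, the last term is $\le 0$, and the right-hand inequality $\langle(\Lambda(\tilde\gamma)-\Lambda(\gamma))f,f\rangle\le\int_\Omega(\gamma-\tilde\gamma)|\nabla\tilde u|^2\,dx$ follows immediately after rewriting $\langle(\Lambda(\tilde\gamma)-\Lambda(\gamma))f,f\rangle=\int_\Omega\tilde\gamma|\nabla\tilde u|^2\,dx-\int_\Omega\gamma|\nabla u|^2\,dx$.

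For the lower bound I would exploit the symmetry between the roles of $\gamma$ and $\tilde\gamma$: applying the same identity with the two conductivities interchanged gives
\[
\int_\Omega \tilde\gamma|\nabla\tilde u|^2\,dx - \int_\Omega \gamma|\nabla u|^2\,dx
= \int_\Omega (\tilde\gamma-\gamma)|\nabla u|^2\,dx \;-\; \int_\Omega \tilde\gamma\,|\nabla u-\nabla\tilde u|^2\,dx .
\]
This expresses $\langle(\Lambda(\tilde\gamma)-\Lambda(\gamma))f,f\rangle$ with $|\nabla u|^2$ in place of $|\nabla\tilde u|^2$, but the quantity we want to bound below is stated purely in terms of $\tilde u$. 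So the remaining work is to convert $\int_\Omega(\gamma-\tilde\gamma)|\nabla u|^2\,dx$ and the correction term into an expression controlled by $\int_\Omega\frac{\tilde\gamma}{\gamma}(\gamma-\tilde\gamma)|\nabla\tilde u|^2\,dx$. The clean way is to write, for the first identity above,
\[
\int_\Omega\gamma\,|\nabla u-\nabla\tilde u|^2\,dx \;\le\; \int_\Omega \gamma\Bigl|\nabla u-\tfrac{\tilde\gamma}{\gamma}\nabla\tilde u\Bigr|^2\,dx \;+\; (\text{completion of square terms}),
\]
i.e.\ to optimize the quadratic $t\mapsto\int_\Omega\gamma|\nabla u - t\,(\tilde\gamma/\gamma)\nabla\tilde u|^2$-type expression; more directly, the standard trick is to add and subtract $\tfrac{\tilde\gamma}{\gamma}\nabla\tilde u$ inside $\int_\Omega\gamma|\nabla u-\nabla\tilde u|^2$ and use the weak formulation for $u$ tested against $u-\tilde u$ once more. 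Carrying this out yields
\[
\int_\Omega \gamma|\nabla u|^2\,dx - \int_\Omega\tilde\gamma|\nabla\tilde u|^2\,dx \;\le\; \int_\Omega \frac{\tilde\gamma}{\gamma}(\tilde\gamma-\gamma)|\nabla\tilde u|^2\,dx ,
\]
equivalently $-\int_\Omega\frac{\tilde\gamma}{\gamma}(\gamma-\tilde\gamma)|\nabla\tilde u|^2\,dx \le \langle(\Lambda(\tilde\gamma)-\Lambda(\gamma))f,f\rangle$, which is the desired left inequality.

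The main obstacle I anticipate is getting the lower bound to come out in exactly the asymmetric form $\int_\Omega\frac{\tilde\gamma}{\gamma}(\gamma-\tilde\gamma)|\nabla\tilde u|^2\,dx$ rather than a symmetric one: the naive energy comparison produces a bound with $|\nabla u|^2$, and one must pass back to $\tilde u$ at the cost of the weight $\tilde\gamma/\gamma$. This is precisely where the completion-of-squares / choice-of-optimal-test-direction step is essential, and where sign-tracking of $\gamma-\tilde\gamma$ (which need not have a fixed sign) must be done carefully — the inequality is really an $L^1$ inequality of the integrands only after the $\gamma$-weighted $L^2$ error term has been absorbed, so one has to make sure the discarded term indeed has the right (nonpositive) sign pointwise in the relevant combination. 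Once that algebraic identity is pinned down, everything else is a routine application of Lax–Milgram (for well-posedness, already granted in the excerpt) and the self-adjointness of $\Lambda$.
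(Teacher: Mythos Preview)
The paper does not actually prove Proposition~\ref{prop:mono}; it is stated with a citation to \cite{Harrach10,Harrach13,Tamburrino2002}. Your overall strategy---an energy identity plus dropping a nonnegative $\gamma$-weighted $L^2$ remainder for the upper bound, and a Cauchy--Schwarz/completion-of-squares step for the lower bound---is exactly the standard argument in those references. However, your execution contains a concrete error that, as written, leads to the wrong inequality.

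Your displayed ``polarization-type'' identity has the wrong sign on the left, and your justification for it is incorrect. You claim that $\int_\Omega\gamma\nabla u\cdot\nabla(u-\tilde u)\,dx$ vanishes because $u-\tilde u$ is an admissible test function; but the weak formulation gives $\int_\Omega\gamma\nabla u\cdot\nabla w\,dx=\langle f,w\rangle$, so this cross term equals $\langle f,u-\tilde u\rangle$, which is \emph{not} zero. The correct manipulation uses \emph{both} weak formulations: $\int_\Omega\gamma\nabla u\cdot\nabla\tilde u\,dx=\langle f,\tilde u\rangle=\int_\Omega\tilde\gamma|\nabla\tilde u|^2\,dx$. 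Expanding $\int_\Omega\gamma|\nabla(u-\tilde u)|^2\,dx$ with this substitution yields
\[
\langle(\Lambda(\tilde\gamma)-\Lambda(\gamma))f,f\rangle
=\int_\Omega\tilde\gamma|\nabla\tilde u|^2\,dx-\int_\Omega\gamma|\nabla u|^2\,dx
=\int_\Omega(\gamma-\tilde\gamma)|\nabla\tilde u|^2\,dx-\int_\Omega\gamma|\nabla(u-\tilde u)|^2\,dx,
\]
i.e.\ the opposite sign on the left of your display. With \emph{your} identity as stated, dropping the nonnegative term would give $\langle(\Lambda(\tilde\gamma)-\Lambda(\gamma))f,f\rangle\geq\int_\Omega(\tilde\gamma-\gamma)|\nabla\tilde u|^2\,dx$, the reverse of what you claim to conclude. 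Once the sign is corrected, the upper bound follows immediately.

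For the lower bound your sketch is in the right direction but too vague. The clean way is to observe that $\phi:=u-\tilde u$ satisfies $\int_\Omega\gamma\nabla\phi\cdot\nabla w\,dx=\int_\Omega(\tilde\gamma-\gamma)\nabla\tilde u\cdot\nabla w\,dx$ for all test functions $w$; taking $w=\phi$ and applying Cauchy--Schwarz with weight $\gamma$ gives $\int_\Omega\gamma|\nabla\phi|^2\,dx\leq\int_\Omega\frac{(\gamma-\tilde\gamma)^2}{\gamma}|\nabla\tilde u|^2\,dx$. Inserting this into the corrected identity produces precisely $\int_\Omega\frac{\tilde\gamma}{\gamma}(\gamma-\tilde\gamma)|\nabla\tilde u|^2\,dx$ as the lower bound, without any need to swap the roles of $\gamma$ and $\tilde\gamma$ or to ``pass back'' from $|\nabla u|^2$ to $|\nabla\tilde u|^2$.
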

In what follows, we focus on detecting (definite) conductivity inhomogeneities, or {\em inclusions}, lying in a known background. For the ease of presentation, we define the notion of an inclusion. 
\begin{definition}\label{def:inc}
Consider a conductivity distribution of the form $\gamma = \gamma_0 + \kappa\chi_D$, where $\gamma_0\in \cc^\infty(\overbar{\Omega})$, $\kappa \in L_+^\infty(\Omega)$, and $D$ is open with $\overbar{D} \subseteq \Omega$. The set $D$ is called a positive {\em inclusion} with respect to the background conductivity $\gamma_0$.   
\end{definition} 
In the rest of the paper, $\gamma$ is implicitly assumed to be as in Definition~\ref{def:inc} unless otherwise mentioned.

Proposition~\ref{prop:mono} gives rise to the following method for computing upper estimates for the inclusion $D$. 
Assume $0 < \beta \leq \kappa$ and let $B\subseteq \Omega$ be an arbitrary open ball. As a consequence of Proposition~\ref{prop:mono} we have
\begin{equation}\label{eq:cm-mono}
B \subseteq D \quad {\rm implies} \quad \Lambda(\gamma_0+\beta\chi_B) - \Lambda(\gamma) \geq 0
\end{equation}
in the sense of semidefiniteness. Here $\chi_B$ is the characteristic function of $B$. Denoting the collection of all admissible open balls by 
\begin{equation}\label{eq:R-cal}
\Mcal = \{B\subseteq\Omega \ {\rm open \ ball} \, \colon \Lambda(\gamma_0+\beta\chi_B) - \Lambda(\gamma) \geq 0 \},
\end{equation} 
we get the upper estimate
\begin{equation}\label{eq:R}
D \subseteq \cup \Mcal. 
\end{equation}
Accurate numerical approximation of $\Mcal$ can be costly, as it typically requires computation of a large number of forward solutions to \eqref{eq:cm}. 

Faster formulation of the semidefiniteness tests in \eqref{eq:R-cal} can be derived by linearizing the operator $\gamma \mapsto \Lambda(\gamma)$ around $\gamma = \gamma_0$. In fact, this modification also yields an upper bound for $D$ analogous to \eqref{eq:R}. To see this, suppose that $\gamma$ is as in Definition~\ref{def:inc}. As a consequence of Proposition~\ref{prop:mono} and the Fr\'echet derivative (see~\eqref{eq:fre-cm} in Appendix \ref{append:B}), we obtain
\begin{equation*}
\langle (\Lambda(\gamma_0) + \beta\Lambda'(\gamma_0)\chi_B - \Lambda(\gamma))f,f \rangle \geq \int_\Omega \bigg( \frac{\gamma_0\kappa}{\gamma}\chi_D - \beta\chi_B \bigg)|\nabla u_0|^2 dx
\end{equation*}
for any $f\in L_\diamond^2(\partial\Omega)$ and $u_0 \in H_\diamond^1(\Omega)$ solving 
\[
\nabla \cdot (\gamma_0\nabla u_0) = 0 \ \ {\rm in} \ \Omega, \qquad \nu\cdot\gamma_0\nabla u_0 = f \ \ {\rm on} \ \partial\Omega.
\]
In particular (cf.~\eqref{eq:cm-mono}) we deduce
\begin{equation}\label{eq:cm-mono-lin-2}
B \subseteq D \quad {\rm implies} \quad \Lambda(\gamma_0) + \beta\Lambda'(\gamma_0)\chi_B - \Lambda(\gamma) \geq 0
\end{equation}
provided that $0 < \beta \leq \gamma_0\kappa / \gamma$. By defining
\begin{equation}\label{eq:Mcal-dot}
\Mcal' = \left\{B\subseteq\Omega \ {\rm open \, ball} \, \colon \Lambda(\gamma_0) + \beta\Lambda'(\gamma_0)\chi_B - \Lambda(\gamma) \geq 0 \right\},
\end{equation}
we have
\begin{equation}\label{eq:R-dot}
D \subseteq \cup \Mcal'
\end{equation}
which is analogous to \eqref{eq:R}. Computational approximation of the set $\Mcal '$ is the main idea behind the reconstruction algorithm studied in this paper. A particular advantage of this approach is that the Fr\'echet derivative $\Lambda'(\gamma_0)$ can be computed ahead of time since it only depends on the (known) background conductivity $\gamma_0$ and object $\Omega$ but not on $B$ or $\beta$. More theoretical plausibility for the monotonicity method is given by the following result which, in a sense, complements relations \eqref{eq:cm-mono} and \eqref{eq:cm-mono-lin-2} \cite{Harrach13}.   
\begin{proposition}\label{prop:D=R}
Let $\gamma\in L_+^\infty(\Omega)$ be as in Definition~\ref{def:inc}. Assume that the background conductivity $\gamma_0$ is piecewise analytic and that the domain $\Omega$ has a $\cc^\infty$-regular boundary. 
Then, for any constant $\beta > 0$,
\begin{equation}\label{eq:co-mono}
\Lambda(\gamma_0 + \beta\chi_B) - \Lambda(\gamma) \geq 0 \ \ \ {\it or} \ \ \Lambda(\gamma_0) + \beta\Lambda'(\gamma_0)\chi_B - \Lambda(\gamma) \geq 0
\end{equation}
implies 
\[
B\subseteq D^\bullet = \overbar{\Omega} \setminus \cup \{U\subseteq \mathbb{R}^n\setminus D \ {\it open \ and \ connected} \,\colon U\cap \partial\Omega \neq \emptyset\}. 
\] 
Note that $D^\bullet$ corresponds to the smallest closed set containing $D$ and having connected complement. Consequently, by \eqref{eq:R-cal} and \eqref{eq:Mcal-dot}, the conditions
\begin{align}
& D \subseteq  \cup \Mcal \subseteq D^\bullet \quad {\it if} \quad 0 < \beta \leq {\rm ess}\inf \kappa, \label{eq:beta-nonlin} \\ 
& D \subseteq \cup \Mcal' \subseteq D^\bullet \quad {\it if} \quad 0 < \beta \leq {\rm ess}\inf \Big( \frac{\gamma_0\kappa}{\gamma} \Big). \label{eq:beta-lin}
\end{align}
hold true. Note also that if $D$ has connected complement, then $D^\bullet = \overbar{D}$.
\end{proposition}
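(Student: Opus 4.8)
The plan is to prove the contrapositive by a concentration-of-energy argument, combining the monotonicity estimates of Proposition~\ref{prop:mono} with the \emph{localized potentials} technique (as in \cite{Harrach13}). The point is that, since $\gamma_0\in\cc^\infty(\overbar\Omega)$ and, for an open ball $B$, also $\gamma_0+\beta\chi_B$ is piecewise analytic, and since $\Omega$ is a connected domain with $\cc^\infty$-regular boundary so that the unique continuation principle applies, one has available the following: for a fixed compact set $K\subseteq\Omega$ with $K\cap\overbar D=\emptyset$ that is \emph{not separated from $\partial\Omega$ by $\overbar D$}, there is a sequence of Neumann data $(f_k)\subseteq L^2_\diamond(\partial\Omega)$ whose associated potentials $v_k$ (solutions of the pertinent forward problem) satisfy $\int_K|\nabla v_k|^2\,dx\to\infty$ and $\int_D|\nabla v_k|^2\,dx\to0$. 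The idea is to feed such a sequence into \eqref{eq:mono} so as to contradict either of the two assumptions in \eqref{eq:co-mono}.

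First I would extract the required geometry from ``$B\not\subseteq D^\bullet$''. Since $\overbar D\subseteq D^\bullet$, the set $B\setminus D^\bullet$ is nonempty, open and disjoint from $\overbar D$; fix an open ball $B_0$ with $\overbar{B_0}\subseteq B\setminus D^\bullet$, so that $B_0\subseteq B\setminus D$ and ${\rm dist}(\overbar{B_0},\overbar D)>0$. By the definition of $D^\bullet$, every point of $B_0$ lies in a connected open subset of $\R^n\setminus D$ that meets $\partial\Omega$; a topological argument (using that $D$ is open, $\overbar D\subseteq\Omega$, and $\partial\Omega$ is smooth) upgrades this to the statement that $\overbar{B_0}$ is not separated from $\partial\Omega$ by $\overbar D$ — precisely the hypothesis of the localized-potentials step with $K=\overbar{B_0}$.

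Next I would run the estimate in each of the two cases of \eqref{eq:co-mono}. In the non-linear case, assuming $\Lambda(\gamma_0+\beta\chi_B)-\Lambda(\gamma)\geq0$, I apply the right-hand inequality of \eqref{eq:mono} with $\tilde\gamma=\gamma_0+\beta\chi_B$, so that $\gamma-\tilde\gamma=\kappa\chi_D-\beta\chi_B$: for any $f\in L^2_\diamond(\partial\Omega)$ with associated $\tilde\gamma$-potential $\tilde u$,
\begin{equation*}
0\leq\langle(\Lambda(\gamma_0+\beta\chi_B)-\Lambda(\gamma))f,f\rangle\leq\int_\Omega(\kappa\chi_D-\beta\chi_B)|\nabla\tilde u|^2\,dx\leq\|\kappa\|_{L^\infty(\Omega)}\int_D|\nabla\tilde u|^2\,dx-\beta\int_{B_0}|\nabla\tilde u|^2\,dx ,
\end{equation*}
where the last bound uses $B_0\subseteq B\setminus D$ and $\kappa\chi_D-\beta\chi_B\leq\kappa\chi_D$ a.e. In the linearized case, assuming $\Lambda(\gamma_0)+\beta\Lambda'(\gamma_0)\chi_B-\Lambda(\gamma)\geq0$, I use the right-hand inequality of \eqref{eq:mono} with $\tilde\gamma=\gamma_0$ (which gives $\langle(\Lambda(\gamma_0)-\Lambda(\gamma))f,f\rangle\leq\int_D\kappa|\nabla u_0|^2\,dx$) together with the quadratic-form identity $\langle\Lambda'(\gamma_0)\eta\,f,f\rangle=-\int_\Omega\eta|\nabla u_0|^2\,dx$ from~\eqref{eq:fre-cm}, where $u_0$ is the $\gamma_0$-potential for $f$, to obtain
\begin{equation*}
0\leq\langle(\Lambda(\gamma_0)+\beta\Lambda'(\gamma_0)\chi_B-\Lambda(\gamma))f,f\rangle\leq\int_D\kappa|\nabla u_0|^2\,dx-\beta\int_B|\nabla u_0|^2\,dx\leq\|\kappa\|_{L^\infty(\Omega)}\int_D|\nabla u_0|^2\,dx-\beta\int_{B_0}|\nabla u_0|^2\,dx .
\end{equation*}
In either case I now insert the localized-potentials sequence $(f_k)$ — for $\gamma_0+\beta\chi_B$ in the first case, for $\gamma_0$ in the second — with $K=\overbar{B_0}$: the first right-hand term stays bounded (indeed tends to $0$) while the second tends to $-\infty$, contradicting the nonnegativity on the left. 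Hence $B\subseteq D^\bullet$, which is the claim. The chain \eqref{eq:beta-nonlin}–\eqref{eq:beta-lin} then follows by combining this converse with the forward inclusions \eqref{eq:R} and \eqref{eq:R-dot} (valid in the stated ranges of $\beta$ via \eqref{eq:cm-mono} and \eqref{eq:cm-mono-lin-2}) and the definitions \eqref{eq:R-cal}, \eqref{eq:Mcal-dot}; the final remark is immediate from the definition of $D^\bullet$.

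I expect the main obstacle to be the localized-potentials step itself: forcing the Dirichlet energy to blow up on $B_0$ while simultaneously suppressing it on $D$ along a single Neumann sequence rests on unique continuation for the divergence-form equation with piecewise-analytic coefficient, and the genuinely delicate point is to verify — from the precise definition of $D^\bullet$ — that $\overbar D$ does not shield $\overbar{B_0}$ from $\partial\Omega$, so that the hypotheses of the localized-potentials construction are actually satisfied. Everything else (the two monotonicity inequalities, the Fr\'echet-derivative formula, and the final set inclusions) is bookkeeping.
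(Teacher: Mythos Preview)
Your proposal is correct and follows essentially the same route as the paper. The paper's proof simply cites \cite[Theorems~4.1 and~4.3]{Harrach13} and sketches exactly the contrapositive localized-potentials argument you have written out --- concentrating the energy on $B\setminus D^\bullet$ while suppressing it on $D^\bullet$ and feeding this into Proposition~\ref{prop:mono} --- so you have effectively reproduced the content behind that citation.
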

\begin{proof}
The claim follows directly from \cite[Theorem 4.1 and Theorem 4.3]{Harrach13} which are proven using the theory of {\em localized potentials} \cite{Gebauer2008b}. The intuition behind \eqref{eq:co-mono} can roughly be described as follows. If $B\not\subseteq D^\bullet$, then $\gamma_0 + \beta\chi_B \geq \gamma$ in $B \setminus D^\bullet$. Consequently, each of the semidefiniteness conditions in \eqref{eq:co-mono} can be contradicted by constructing sequences of potentials having simultaneously very large energy in $B\setminus D^\bullet$ and very small energy in $D^\bullet$.  \qed
\end{proof}
Relations \eqref{eq:beta-nonlin} and \eqref{eq:beta-lin} indicate that both $\Mcal$ and $\Mcal'$ can contain significant information about $D$. However, due to ill-posedness, the associated semidefiniteness tests can be expected to be sensitive with respect to measurement noise and modelling error. Aiming for stable numerical implementation we introduce a regularized variant of the semidefiniteness tests.

\section{Regularized monotonicity-based reconstruction} \label{sec:reg-mono}

In practice, the infinite precision measurement $\Lambda(\gamma)$ is out of reach, and moreover, only approximate numerical models are available for computational monotonicity tests. We model a 
collection of abstract approximate forward models by a family of compact self-adjoint operators $\{\Lambda_h(\gamma)\}_{h>0}$ such that 
\begin{equation}\label{eq:h-Lambda}
\|\Lambda(\gamma) - \Lambda_h(\gamma)\|_{\Ls(L^2_\diamond(\partial\Omega))} \leq \omega(h)\|\gamma\|_{L^\infty(\Omega)}, \quad \lim_{h\to 0}\omega(h) = 0
\end{equation}
for any $\gamma\in L_+^\infty(\Omega)$, where $\omega$ is independent of $\gamma$. In addition to systematic modelling error, real-life measurements are also corrupted by noise caused by imperfections of the measurement device. We assume the following additive noise model
\begin{equation*}
\Lambda_{h}^{\delta}(\gamma) = \Lambda_h(\gamma) + N^\delta,
\end{equation*} 
where the noise is modelled by a family of compact and self-adjoint operators
\begin{equation}\label{eq:noise}
N^\delta \colon L^2_\diamond(\partial\Omega) \to L^2_\diamond(\partial\Omega), \qquad \|N^\delta\|_{\Ls(L^2_\diamond(\partial\Omega))} \leq \delta.
\end{equation} 
Note that, by symmetrizing if necessary, the self-adjointness assumption of the error operator can be made without loss of generality.

To facilitate reading, we use the following abbreviations for certain operators and the related infimal eigenvalues. For a given open set $B\subseteq \Omega$ we denote 
\begin{equation}\label{eq:T}
\begin{array}{rl}
T(B) & = \Lambda(\gamma_0 + \beta\chi_B) - \Lambda(\gamma),\\[3pt]
T_h(B) & = \Lambda_h(\gamma_0 + \beta\chi_B) - \Lambda_h(\gamma),\\[3pt] 
T_h^\delta(B) & = \Lambda_h(\gamma_0 + \beta\chi_B) - \Lambda_h^\delta(\gamma).
\end{array}
\end{equation} 
As all operators in \eqref{eq:T} are self-adjoint, their spectra are contained in $\R$. Moreover, by Hilbert--Schmidt theorem \cite{Reed1990}, the eigenvalues of any infinite dimensional, compact and self-adjoint Hilbert space operator $S$ accumulate at zero implying 
\begin{equation*}
-\|S\| \leq \inf\sigma(S) \leq 0
\end{equation*}
where $\sigma(S)$ denotes the spectrum of $S$. 

To guarantee a meaningful noisy reconstruction, we introduce a regularized version of $\Mcal$ in \eqref{eq:R-cal} defined by  
\begin{equation}\label{eq:Mcal-hd}
\Mcal_\alpha(T_h^\delta) = \{B\subseteq\Omega \ {\rm open \ ball} \, \colon  T_h^\delta(B) + \alpha\id \geq 0 \}
\end{equation} 
where $\alpha \in \R$ is a regularization parameter. Next we investigate in which sense and under which conditions the set $\Mcal_\alpha(T_h^\delta)$ converges to $\Mcal_0(T)$, as the error parameters $h$ and $\delta$ tend to zero. To establish an asymptotic relationship, we resort to the following lemma which is a consequence of spectral continuity \cite{Kato1995}.

\begin{lemma}\label{lem:cont-spec}
Let $S$ and $T$ be bounded self-adjoint operators on a Hilbert space $H$. Then 
\begin{equation}\label{eq:infST}
|\inf\sigma(S) - \inf\sigma(T)| \leq \|S - T\|_{\Ls(H)}
\end{equation}
\end{lemma}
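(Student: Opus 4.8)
The plan is to derive the bound from the variational (min--max) characterization of the infimum of the spectrum of a bounded self-adjoint operator. Recall that for a bounded self-adjoint operator $S$ on a Hilbert space $H$ one has
\[
\inf\sigma(S) = \inf_{\substack{v\in H\\ \|v\|=1}} \langle Sv,v\rangle,
\]
where $\langle\cdot,\cdot\rangle$ now temporarily denotes the inner product of $H$. This identity is the key tool; it replaces the spectral-continuity machinery of \cite{Kato1995} by an elementary argument, so the ``hard part'' is really just recalling and applying this characterization cleanly.

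First I would fix an arbitrary unit vector $v\in H$ and write
\[
\langle Sv,v\rangle = \langle Tv,v\rangle + \langle (S-T)v,v\rangle \geq \inf\sigma(T) - \|S-T\|_{\Ls(H)},
\]
using the Cauchy--Schwarz inequality $|\langle (S-T)v,v\rangle| \leq \|S-T\|_{\Ls(H)}\|v\|^2 = \|S-T\|_{\Ls(H)}$ together with $\langle Tv,v\rangle \geq \inf\sigma(T)$. Taking the infimum over all unit vectors $v$ on the left-hand side gives
\[
\inf\sigma(S) \geq \inf\sigma(T) - \|S-T\|_{\Ls(H)},
\]
that is, $\inf\sigma(T) - \inf\sigma(S) \leq \|S-T\|_{\Ls(H)}$.

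Next I would observe that the roles of $S$ and $T$ are symmetric: repeating the same estimate with $S$ and $T$ interchanged (and using $\|T-S\|_{\Ls(H)} = \|S-T\|_{\Ls(H)}$) yields $\inf\sigma(S) - \inf\sigma(T) \leq \|S-T\|_{\Ls(H)}$. Combining the two inequalities gives $|\inf\sigma(S) - \inf\sigma(T)| \leq \|S-T\|_{\Ls(H)}$, which is exactly \eqref{eq:infST}.

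I do not anticipate any genuine obstacle here; the only point requiring a word of justification is the min--max formula $\inf\sigma(S) = \inf_{\|v\|=1}\langle Sv,v\rangle$ for bounded self-adjoint $S$, which is standard (it follows, e.g., from the spectral theorem, or from the fact that $\inf\sigma(S)$ is the largest $c$ with $S - c\,\id \geq 0$). If one prefers to avoid even this, one can instead argue directly from the definition of semidefiniteness: for any $\lambda$ with $S \geq \lambda\,\id$ one has $T = S + (T-S) \geq (\lambda - \|S-T\|_{\Ls(H)})\,\id$, so $\inf\sigma(T) \geq \inf\sigma(S) - \|S-T\|_{\Ls(H)}$; the rest is as above. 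Either route is short, so the proof is essentially immediate.
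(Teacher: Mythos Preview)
Your proof is correct and takes a genuinely different, more elementary route than the paper. The paper argues via spectral continuity in the Hausdorff sense (citing \cite[Chapter~V~\S4, Theorem~4.10]{Kato1995}): assuming without loss of generality $\inf\sigma(S) < \inf\sigma(T)$, it picks a sequence $\lambda_j\in\sigma(S)$ with $\lambda_j\to\inf\sigma(S)$, invokes $\textup{dist}(\lambda_j,\sigma(T))\leq\|S-T\|_{\Ls(H)}$, observes that $\textup{dist}(\lambda_j,\sigma(T))=\inf\sigma(T)-\lambda_j$ since $\lambda_j<\inf\sigma(T)$, and passes to the limit. You instead go straight through the variational characterization $\inf\sigma(S)=\inf_{\|v\|=1}\langle Sv,v\rangle$ and a one-line Cauchy--Schwarz estimate, then swap $S$ and $T$. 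Your approach is self-contained and avoids the external citation; the paper's approach has the virtue of making explicit that \eqref{eq:infST} is a special case of a stronger statement about the full spectra, which is conceptually nice but not needed here. Either way the lemma is essentially immediate, as you note.
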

\begin{proof}
We begin by noting that \eqref{eq:infST} trivially holds if its left-hand side vanishes. Hence, by symmetry we may assume that $\inf\sigma(S) < \inf\sigma(T)$. There exists a sequence $\{\lambda_j\}_{j=1}^\infty \subseteq \sigma(S)$ such that
\[
\lim_{j\to\infty} \lambda_j = \inf\sigma(S)
\]
and $\inf \sigma(S) \leq \lambda_j < \inf \sigma(T)$ for all $j$. By the continuity of the spectrum \cite[Chapter V \S 4, Theorem 4.10]{Kato1995}, we have 
\begin{equation*}
\textup{dist}(\lambda_j,\sigma(T)) \leq \sup_{\lambda \in \sigma(S)} \textup{dist}(\lambda,\sigma(T)) \leq \|S- T\|_{\Ls(H)}.
\end{equation*}
Consequently 
\[
\|S-T\|_{\Ls(H)} \geq \textup{dist} (\lambda_j, \sigma(T)) = \inf_{\mu \in \sigma(T)} |\lambda_j - \mu| = \inf \sigma(T)  - \lambda_j
\]
and hence, by taking the limit, we deduce
\[
\inf\sigma(T) - \inf\sigma(S) = \lim_{j\to\infty} \textup{dist}(\lambda_j,\sigma(T)) \leq \|S - T\|_{\Ls(H)}
\]
which concludes the proof. \qed
\end{proof}
As a special consequence of Lemma~\ref{lem:cont-spec}, \eqref{eq:h-Lambda}, and \eqref{eq:noise} we obtain
\[
|\inf\sigma(T_h^\delta(B)) - \inf\sigma(T(B))| \leq \omega(h)\left( \|\gamma_0\|_{L^\infty(\Omega)} + \beta + \|\gamma\|_{L^\infty(\Omega)} \right) + \delta
\]
which implies that 
\begin{equation}\label{eq:B-uni}
\lim_{h,\delta \to 0} \inf\sigma(T_h^\delta(B)) = \inf\sigma(T(B))
\end{equation}
uniformly in $B\subseteq \Omega$. The following theorem shows that, with a suitable sequence of regularization parameters $\alpha\in \R$, the set $\Mcal_0(T)$ can be, in a sense, stably approximated by the sequence $\Mcal_\alpha(T_h^\delta)$.
\begin{theorem}\label{thm:h-mono} 
Suppose that the regularization parameter $\alpha = \alpha(h,\delta) \in \R$ satisfies
\begin{equation}\label{eq:reg-param}
\delta - \alpha(h,\delta) \leq \inf_{B\in\Mcal_0(T)}\inf\sigma(T_h(B)) \quad {\it and} \quad \lim_{h,\delta \to 0} \alpha(h,\delta) = 0.
\end{equation}
Then for any given $\lambda > 0$ there exists an $\veps_\lambda > 0$ such that
\begin{equation}\label{eq:R-lim}
\Mcal_0(T) \subseteq \Mcal_{\alpha(h,\delta)}(T_h^\delta) \subseteq \Mcal_\lambda(T)
\end{equation}
for all $h,\delta \in (0,\veps_\lambda]$. 
\end{theorem}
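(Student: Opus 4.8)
The plan is to establish the two inclusions in \eqref{eq:R-lim} separately, relying on the uniform spectral-perturbation bound behind \eqref{eq:B-uni} together with the two defining properties of the regularization parameter in \eqref{eq:reg-param}.

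For the first inclusion $\Mcal_0(T) \subseteq \Mcal_{\alpha(h,\delta)}(T_h^\delta)$, I would take an arbitrary ball $B \in \Mcal_0(T)$ and show $T_h^\delta(B) + \alpha\id \geq 0$, i.e.\ $\inf\sigma(T_h^\delta(B)) \geq -\alpha$. The key observation is that $T_h^\delta(B) = T_h(B) - N^\delta$, so Lemma~\ref{lem:cont-spec} applied to $S = T_h^\delta(B)$ and $T = T_h(B)$ (whose difference is $-N^\delta$, of norm at most $\delta$ by \eqref{eq:noise}) gives $\inf\sigma(T_h^\delta(B)) \geq \inf\sigma(T_h(B)) - \delta$. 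Since $B \in \Mcal_0(T)$, the first condition in \eqref{eq:reg-param} yields $\inf\sigma(T_h(B)) \geq \delta - \alpha(h,\delta)$, hence $\inf\sigma(T_h^\delta(B)) \geq -\alpha(h,\delta)$, which is exactly the membership condition for $\Mcal_{\alpha(h,\delta)}(T_h^\delta)$. Note that this inclusion holds for \emph{all} $h,\delta>0$, with no smallness required.

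For the second inclusion $\Mcal_{\alpha(h,\delta)}(T_h^\delta) \subseteq \Mcal_\lambda(T)$, fix $\lambda>0$. I would first use the uniform estimate preceding \eqref{eq:B-uni}: for every ball $B\subseteq\Omega$,
\[
|\inf\sigma(T_h^\delta(B)) - \inf\sigma(T(B))| \leq \omega(h)\bigl(\|\gamma_0\|_{L^\infty(\Omega)} + \beta + \|\gamma\|_{L^\infty(\Omega)}\bigr) + \delta.
\]
Since $\gamma_0$ and $\gamma$ are fixed, the right-hand side is a function of $(h,\delta)$ alone that tends to $0$; combining it with $\lim_{h,\delta\to0}\alpha(h,\delta)=0$ from \eqref{eq:reg-param}, I can pick $\veps_\lambda>0$ so that for all $h,\delta\in(0,\veps_\lambda]$ both this bound and $\alpha(h,\delta)$ are strictly less than $\lambda/2$ (say), hence their sum is less than $\lambda$. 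Then, for any $B\in\Mcal_{\alpha(h,\delta)}(T_h^\delta)$, we have $\inf\sigma(T_h^\delta(B)) \geq -\alpha(h,\delta) > -\lambda/2$, and therefore
\[
\inf\sigma(T(B)) \geq \inf\sigma(T_h^\delta(B)) - \Bigl(\omega(h)\bigl(\|\gamma_0\|_{L^\infty(\Omega)} + \beta + \|\gamma\|_{L^\infty(\Omega)}\bigr) + \delta\Bigr) > -\lambda/2 - \lambda/2 = -\lambda,
\]
which says precisely $T(B) + \lambda\id \geq 0$, i.e.\ $B\in\Mcal_\lambda(T)$. Shrinking $\veps_\lambda$ if necessary also guarantees, via the first condition in \eqref{eq:reg-param}, that $\alpha(h,\delta)$ may be taken nonnegative in the relevant range should that be needed for cleanliness, but it is not essential to the argument.

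The main obstacle — more a subtlety than a genuine difficulty — is the \emph{uniformity in $B$}: the argument only works because the perturbation bound from \eqref{eq:h-Lambda} and \eqref{eq:noise} does not depend on $B$ (the $L^\infty$-norms appearing are $\|\gamma_0+\beta\chi_B\|_{L^\infty} \le \|\gamma_0\|_{L^\infty}+\beta$, uniformly bounded over all balls $B\subseteq\Omega$), so a single $\veps_\lambda$ works simultaneously for every ball. One must also be slightly careful that the first condition in \eqref{eq:reg-param} is not vacuous: the infimum $\inf_{B\in\Mcal_0(T)}\inf\sigma(T_h(B))$ could in principle be very negative, but \eqref{eq:reg-param} is phrased as a hypothesis on $\alpha$, so this is taken as given and simply propagated through the estimates above.
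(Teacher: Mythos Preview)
Your proposal is correct and follows essentially the same route as the paper's own proof: the first inclusion is obtained by combining the hypothesis $\inf\sigma(T_h(B))\ge\delta-\alpha$ with the norm bound on $N^\delta$ (the paper does this via the operator inequalities $T_h(B)\ge\inf\sigma(T_h(B))\id$ and $N^\delta\le\delta\id$, you via Lemma~\ref{lem:cont-spec}, but these are equivalent one-line manipulations), and the second inclusion is obtained by choosing $\veps_\lambda$ so that both $\alpha(h,\delta)$ and the uniform spectral error are below $\lambda/2$, exactly as in the paper. Your remark that the first inclusion holds for all $h,\delta>0$ without smallness, and your identification of the uniformity in $B$ as the key point, also mirror the paper's presentation.
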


\begin{proof}
Let us start by noting that according to \eqref{eq:B-uni} and Lemma~\ref{lem:A-1} of Appendix~\ref{append:A}, the conditions \eqref{eq:reg-param} are not contradictory, and thus, the set of admissible sequences of regularization parameters is not empty. To prove the left-hand set inclusion in \eqref{eq:R-lim}, let $B\in \Mcal_0(T)$ be an arbitrary open ball. First we note that, by a basic property of the infimal eigenvalue and \eqref{eq:reg-param}, we have 
\begin{equation}\label{eq:h-mono-1}
T_h(B) \geq \inf\sigma(T_h(B)) \id \geq (\delta - \alpha(h,\delta))\id.
\end{equation}
From \eqref{eq:noise} we obtain $\delta\id \geq N^\delta$, which together with \eqref{eq:h-mono-1} yields
\[
T_h^\delta(B) + \alpha(h,\delta)\id = T_h(B) - N^\delta + \alpha(h,\delta)\id \geq \delta \id - N^\delta \geq 0.  
\]
This shows that $B\in \Mcal_{\alpha(h,\delta)}(T_h^\delta)$ for all $h,\delta > 0$. In particular, the left-hand set inclusion in \eqref{eq:R-lim} holds.  

To prove the right-hand set inclusion in \eqref{eq:R-lim}, let $\lambda > 0$ be arbitrary. According to \eqref{eq:reg-param} and the uniform convergence \eqref{eq:B-uni}, there exists an $\veps_\lambda > 0$ such that 
\begin{equation*}
\alpha(h,\delta) \leq \frac{\lambda}{2},\quad \inf\sigma(T_h^\delta(B)) \leq \inf\sigma(T(B)) + \frac{\lambda}{2}
\end{equation*}
for all $h,\delta \in (0,\veps_\lambda]$ and open balls $B\subseteq \Omega$. Moreover, by definition \eqref{eq:Mcal-hd}, any open ball $B \in \Mcal_{\alpha(h,\delta)}(T_h^\delta)$ satisfies  
\begin{equation}\label{eq:lambda-ineq}
0 \leq \inf\sigma(T_h^\delta(B)) + \alpha(h,\delta) \leq \inf \sigma(T(B))+\lambda
\end{equation}
which implies that $\Mcal_{\alpha(h,\delta)}(T_h^\delta) \subseteq \Mcal_\lambda(T)$ for all $h,\delta \in (0,\veps_\lambda]$.
\qed
\end{proof}
Compared to $\Mcal_{\alpha}(T_h^\delta)$, the family $\Mcal_\lambda(T)$ has the favourable monotone decreasing property
\begin{equation}\label{eq:mono-decr}
0 < \lambda \leq \mu \ \ {\rm implies} \ \ \Mcal_\lambda(T) \subseteq \Mcal_\mu(T).
\end{equation}
This yields the set-theoretic limit (defined as in e.g.\ \cite{Resnick2014})
\begin{equation}\label{eq:set-limit}
\lim_{\lambda\to 0}\Mcal_\lambda(T) = \bigcap_{\lambda > 0}\Mcal_\lambda(T) = \Mcal_0(T)
\end{equation}
where the left and right equalities follow from the monotone decreasing property \eqref{eq:mono-decr} and the definition of $\Mcal_\lambda(T)$, respectively. As a consequence of \eqref{eq:set-limit} and Theorem~\ref{thm:h-mono}, we obtain a corresponding limit for $\Mcal_\alpha(T_h^\delta)$.
\begin{corollary}\label{cor:h-mono}
Let the regularization parameter be as in \eqref{eq:reg-param}. Then we have the set-theoretic limit 
\begin{equation}\label{eq:hd-set-limit}
\lim_{h,\delta \to 0} \Mcal_{\alpha(h,\delta)}(T_h^\delta) = \Mcal_0(T) = \bigcap_{h,\delta > 0} \Mcal_{\alpha(h,\delta)}(T_h^\delta).
\end{equation}
\end{corollary}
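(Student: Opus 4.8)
The plan is to derive the corollary directly from Theorem~\ref{thm:h-mono} together with the set-theoretic limit \eqref{eq:set-limit}, without reopening any spectral arguments. The key observation is that Theorem~\ref{thm:h-mono} already provides a two-sided sandwich $\Mcal_0(T) \subseteq \Mcal_{\alpha(h,\delta)}(T_h^\delta) \subseteq \Mcal_\lambda(T)$ valid for all sufficiently small $h,\delta$; the task is simply to convert this into the statement about $\liminf$ and $\limsup$ of the sequence of sets in the sense of \cite{Resnick2014}.

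First I would recall that, for a family of sets indexed by the parameters $(h,\delta)\to 0$, the set-theoretic limit inferior consists of all points eventually belonging to every $\Mcal_{\alpha(h,\delta)}(T_h^\delta)$, and the limit superior consists of all points belonging to $\Mcal_{\alpha(h,\delta)}(T_h^\delta)$ for arbitrarily small $h,\delta$. From the left inclusion in \eqref{eq:R-lim}, which holds for \emph{all} $h,\delta > 0$, we immediately get $\Mcal_0(T) \subseteq \bigcap_{h,\delta>0}\Mcal_{\alpha(h,\delta)}(T_h^\delta) \subseteq \liminf_{h,\delta\to 0}\Mcal_{\alpha(h,\delta)}(T_h^\delta)$, and this simultaneously establishes the rightmost equality claimed in \eqref{eq:hd-set-limit}. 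For the reverse, fix $\lambda > 0$; by the right inclusion in \eqref{eq:R-lim} there is $\veps_\lambda>0$ with $\Mcal_{\alpha(h,\delta)}(T_h^\delta)\subseteq \Mcal_\lambda(T)$ for all $h,\delta\in(0,\veps_\lambda]$, hence $\limsup_{h,\delta\to 0}\Mcal_{\alpha(h,\delta)}(T_h^\delta)\subseteq \Mcal_\lambda(T)$. Since $\lambda>0$ was arbitrary, intersecting over $\lambda$ and invoking \eqref{eq:set-limit} gives $\limsup_{h,\delta\to 0}\Mcal_{\alpha(h,\delta)}(T_h^\delta)\subseteq \bigcap_{\lambda>0}\Mcal_\lambda(T) = \Mcal_0(T)$.

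Combining the two bounds, $\limsup \subseteq \Mcal_0(T) \subseteq \liminf$, forces equality throughout, so the set-theoretic limit exists and equals $\Mcal_0(T)$; the intersection formula is the byproduct already noted. I do not anticipate a genuine obstacle here: the content is entirely in Theorem~\ref{thm:h-mono}, and the corollary is a bookkeeping exercise in the definitions of $\liminf$ and $\limsup$ of sets. The only point requiring a modicum of care is making sure the index set $(h,\delta)\to(0,0)$ is handled correctly in the definition of the set-theoretic limit — i.e.\ that ``eventually'' means ``for all $(h,\delta)$ in some punctured neighbourhood of the origin in the first quadrant'' — but this is exactly the regime in which \eqref{eq:R-lim} is stated, so the matching is immediate.
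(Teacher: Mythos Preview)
Your argument is correct and follows essentially the same squeeze strategy as the paper: the left inclusion of \eqref{eq:R-lim} (valid for all $h,\delta>0$) gives $\Mcal_0(T)\subseteq\bigcap\subseteq\liminf$, the right inclusion gives $\limsup\subseteq\Mcal_\lambda(T)$ for every $\lambda>0$, and \eqref{eq:set-limit} closes the sandwich. The only cosmetic difference is that for the inclusion $\bigcap_{h,\delta>0}\Mcal_{\alpha(h,\delta)}(T_h^\delta)\subseteq\Mcal_0(T)$ the paper goes back to the spectral inequality \eqref{eq:lambda-ineq} and the uniform convergence \eqref{eq:B-uni}, whereas you obtain it more economically from $\bigcap\subseteq\limsup\subseteq\Mcal_0(T)$; your route avoids reopening any spectral estimates, which is a small gain in cleanliness.
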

\begin{proof}
The fact that the set-theoretic limit exists and coincides with $\Mcal_0(T)$ is a direct consequence of the ``squeeze principle'' enforced by \eqref{eq:R-lim} and \eqref{eq:set-limit}. Note that the considered family of sets is not necessarily decreasing; hence, the right-hand equality in \eqref{eq:hd-set-limit} has to be proven separately. In the proof of Theorem~\ref{thm:h-mono} it is shown that $\Mcal_0(T) \subseteq \Mcal_{\alpha(h,\delta)}(T_h^\delta)$ for all $h,\delta > 0$, implying the ``$\supseteq$''-direction. The ``$\subseteq$''-direction follows from \eqref{eq:lambda-ineq} by letting $h,\delta \to 0$ and recalling \eqref{eq:B-uni}.  
\qed
\end{proof}
\begin{remark}\label{rem:h-mono}
Suppose that the inclusion $D$ has a connected complement, and that $|D| = |\overbar{D}|$. Assume further that the approximate operator family satisfies the monotonicity property: $\gamma \geq \tilde{\gamma}$ implies $\Lambda_h(\tilde{\gamma}) - \Lambda_h(\gamma) \geq 0$ for all $\gamma, \tilde{\gamma} \in L_+^\infty(\Omega)$. By Proposition~\ref{prop:D=R} we have $\cup\Mcal_0(T) \subseteq \overbar{D}$. Hence, by monotonicity and choosing $\beta$ as in \eqref{eq:beta-nonlin} yields
\[
T_h(B) \geq T_h(\cup\Mcal_0(T)) \geq T_h(\overbar{D}) = T_h(D) \geq 0
\]  
for any $B\in \Mcal_0(T)$. Consequently, the choice $\alpha(h,\delta) = \delta$ is in this case sufficient for obtaining \eqref{eq:hd-set-limit}.
\end{remark}

\begin{remark}
The results indicated by Theorem \ref{thm:h-mono}, Corollary \ref{cor:h-mono}, and Remark \ref{rem:h-mono} straightforwardly adapt to the linearized version \eqref{eq:R-dot}. To complete the proofs it is sufficient, in addition to \eqref{eq:h-Lambda}, to assume that we have
\begin{equation}\label{eq:h-linLambda}
\|\Lambda'(\gamma)\eta - \Lambda'_h(\gamma)\eta\|_{\Ls(L^2_\diamond(\partial\Omega))} \leq \omega(h)\|\gamma\|_{L^\infty(\Omega)}\|\eta\|_{L^\infty(\Omega)}
\end{equation}
for all $\eta\in L^\infty(\Omega)$. Furthermore, the definitions \eqref{eq:T} have to be modified accordingly.

\end{remark}

While the above asymptotic results are formulated between the somewhat abstract ball collections, a relevant question is whether an analogue of \eqref{eq:hd-set-limit} holds for their unions which --- according to Proposition~\ref{prop:D=R} --- can be directly compared to conductivity inclusions. In particular, such a result would imply 
$$
D\subseteq \lim_{h,\delta \to 0}\cup\Mcal_{\alpha(h,\delta)}(T_h^\delta) \subseteq D^\bullet
$$ 
with the convergence in the sense of the Lebesgue measure. With straightforward modifications, the proof of Theorem~\ref{thm:h-mono} can be adapted to the case where the ball collections are replaced with the unions 
$$ 
\cup \Mcal_\lambda(T) \ \ {\rm and} \ \  \cup\Mcal_\alpha(T_h^\delta).
$$ 
Moreover, the former family is monotonously decreasing (cf.~\eqref{eq:mono-decr}) allowing a set-theoretic limit and convergence in the Lebesgue measure. However, currently we are not aware of a non-trivial relationship between (cf~\eqref{eq:set-limit}) 
$$
\bigcap_{\lambda > 0} \cup\Mcal_\lambda(T) \ \ {\rm and} \ \ \cup\Mcal_0(T).
$$
To demonstrate the present difficulty, pick a point $x$ that belongs to $\cup\Mcal_\lambda(T)$ for all $\lambda > 0$. Then for each $\lambda > 0$ there exists an open ball $B_\lambda \in \Mcal_\lambda(T)$ which contains $x$. However, without further specifications, this does not imply the existence of a {\em fixed} ball $B$ that would contain $x$ and lie in $\Mcal_\lambda(T)$ for all $\lambda > 0$.

\section{Approximating infinite-precision data by realistic models} \label{sec:cem}

The most widely used model for real-life EIT measurements is the CEM which is capable of predicting measurement data up to instrument precision \cite{Cheney1999,Cheng1989,Somersalo1992}. In this section, we define the CEM and review it's fundamental monotonicity property analogous to \eqref{eq:mono}. Subsequently, we point out that --- under some reasonable regularity assumptions --- the CEM can be used to construct sequences of approximate operators of type \eqref{eq:h-Lambda} and \eqref{eq:h-linLambda} for both $\Lambda(\gamma)$ and $\Lambda'(\gamma)$, respectively. 

\subsection{Complete electrode model}

The CEM is formally defined by the boundary value problem
\begin{equation}
\label{eq:cem}
\begin{array}{ll}
\displaystyle{\nabla\cdot(\gamma\nabla v) = 0, }\quad  &{\rm in}\;\; \Omega, \\[5pt] 
{\displaystyle{\nu\cdot\gamma\nabla v} = 0, }\quad &{\rm on}\;\;{\partial\Omega}\setminus \bigcup_{j=1}^k\overbar{E_j},\\[5pt] 
{\displaystyle v+z_j{\nu\cdot\gamma\nabla v} = V_j, } \quad &{\rm on}\;\; E_j, \\ 
{\displaystyle \int_{E_j}\nu\cdot\gamma\nabla v\, dS = I_j},\phantom{\enskip} \quad & j=1,2,\ldots k, \\[2pt]
\end{array}
\end{equation} 
where the open, connected and mutually disjoint sets $E_j \subseteq {\partial\Omega}$ model the electrode patches attached to the outer boundary of the object. For a given conductivity $\gamma\in L_+^\infty(\Omega)$, contact impedance $z \in \R_+^k$, and net input current pattern 
$$
I\in \R_\diamond^k = \Bigg\{W\in\R^k: \sum_{j=1}^k W_j = 0\Bigg\},
$$ 
a unique weak solution pair
\[
(v,V) \in H^1(\Omega)\oplus \R_\diamond^k
\] 
to problem \eqref{eq:cem} exists \cite{Somersalo1992,Hyvonen2004}. The electrode measurement data related to $\gamma$ (and $z$) are fully characterized by the bounded, linear, and self-adjoint \cite{Somersalo1992} {\em measurement map}
\begin{equation*}
R(\gamma) : \R_\diamond^k \to \R_\diamond^k, \qquad I \mapsto V.
\end{equation*}
Here $I$ and $V$ model the net input currents and the net voltages perceived by the electrodes, respectively. Note that $R(\gamma)$ depends also on the contact impedance $z$. However, this dependence will be omitted except in Proposition~\ref{prop:cem-mono} where its values may vary.

Compared to the CM \eqref{eq:cm}, the CEM has a more complicated mathematical formulation \eqref{eq:cem}. Partly due to this, CEM-based EIT is not theoretically well-understood. In particular, virtually all known uniqueness and stability results --- and the related reconstruction techniques --- for the inverse conductivity problem are formulated in terms of the CM. However, by the following proposition, the monotonicity principle \eqref{eq:mono} extends quite naturally to the CEM framework \cite{Harrach15}. 
\begin{proposition}\label{prop:cem-mono}
Consider two arbitrary conductivities $\gamma, \tilde{\gamma} \in L_+^\infty(\Omega)$ and contact impedances $z,\tilde{z} \in \R_+^k$. Denote
\begin{align*}
	c_0 &= \int_\Omega \frac{\tilde{\gamma}}{\gamma}(\gamma - \tilde{\gamma})|\nabla \tilde{v}|^2 dx + \sum_{j=1}^k \int_{E_j}\frac{z_j}{\tilde{z}_j}\Big(\frac{1}{z_j} - \frac{1}{\tilde{z}_j}\Big)|\tilde{v} - \tilde{V}_j|^2 dS, \\
	c_1 &= \int_\Omega (\gamma - \tilde{\gamma})|\nabla \tilde{v}|^2 dx + \sum_{j=1}^k \int_{E_j}\Big(\frac{1}{z_j} - \frac{1}{\tilde{z}_j}\Big)|\tilde{v} - \tilde{V}_j|^2 dS,
\end{align*}
where $(\tilde{v},\tilde{V})$ is the solution pair to \eqref{eq:cem} corresponding to the input current $I\in\mathbb{R}_\diamond^k$, conductivity $\tilde{\gamma}$, and contact impedance $\tilde{z}\in \R_+^k$. Then it holds 
\begin{equation*}
 c_0 \leq I^{\rm T} (R(\tilde{\gamma},\tilde{z}) - R(\gamma,z))I \leq 
c_1.
\end{equation*}
\end{proposition}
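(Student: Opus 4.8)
The plan is to derive the two-sided estimate from a variational (energy) characterization of the CEM solution, exactly mirroring the strategy behind Proposition~\ref{prop:mono} for the continuum model. First I would recall that the bilinear form associated with \eqref{eq:cem} is
\[
B_{\gamma,z}\big((w,W),(\phi,\Phi)\big) = \int_\Omega \gamma\nabla w\cdot\nabla\phi\,dx + \sum_{j=1}^k \frac{1}{z_j}\int_{E_j}(w-W_j)(\phi-\Phi_j)\,dS,
\]
and that the weak solution pair $(v,V)$ for data $(\gamma,z,I)$ satisfies $B_{\gamma,z}((v,V),(\phi,\Phi)) = I^{\rm T}\Phi$ for all test pairs. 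This immediately gives the quadratic identity $I^{\rm T}R(\gamma,z)I = B_{\gamma,z}((v,V),(v,V))$, and more usefully the Dirichlet-principle-type representation $I^{\rm T}R(\gamma,z)I = \min\{B_{\gamma,z}((w,W),(w,W)) : (w,W)\in H^1(\Omega)\oplus\R_\diamond^k,\ \text{current constraint encoded via }I^{\rm T}W\}$; more precisely, via the standard Legendre-type duality one has the representation in which $(v,V)$ is the unique minimizer and competitors are all admissible pairs. I would write out the precise form of this variational principle so that both $R(\gamma,z)$ and $R(\tilde\gamma,\tilde z)$ are expressed through the same energy functional evaluated at, respectively, their own minimizers.

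Next I would exploit the minimality in two directions. For the lower bound $c_0 \le I^{\rm T}(R(\tilde\gamma,\tilde z)-R(\gamma,z))I$, I would use a clever substitution: test the $(\gamma,z)$-energy with the competitor built by rescaling the $\tilde\gamma$-solution (the analogue of the trick in \cite{Harrach13} that produces the $\tilde\gamma/\gamma$ factor), or equivalently use the dual (Thomson/Dirichlet) principle on the current side. For the upper bound $I^{\rm T}(R(\tilde\gamma,\tilde z)-R(\gamma,z))I \le c_1$, I would feed the $\tilde\gamma$-minimizer $(\tilde v,\tilde V)$ directly into the $(\gamma,z)$-energy as a (suboptimal) competitor; minimality then gives $I^{\rm T}R(\gamma,z)I \le B_{\gamma,z}((\tilde v,\tilde V),(\tilde v,\tilde V))$, and subtracting $I^{\rm T}R(\tilde\gamma,\tilde z)I = B_{\tilde\gamma,\tilde z}((\tilde v,\tilde V),(\tilde v,\tilde V))$ produces exactly $\int_\Omega(\gamma-\tilde\gamma)|\nabla\tilde v|^2\,dx + \sum_j(z_j^{-1}-\tilde z_j^{-1})\int_{E_j}|\tilde v-\tilde V_j|^2\,dS = c_1$. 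The key algebraic point is that the two energy functionals differ only in the coefficients $\gamma$ vs.\ $\tilde\gamma$ and $z_j^{-1}$ vs.\ $\tilde z_j^{-1}$, and the difference is the desired quadratic expression in $\tilde v$.

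The lower bound is the more delicate half, and I expect it to be the main obstacle. The subtlety is the simultaneous presence of the bulk term (coefficient $\gamma$) and the $k$ boundary contact terms (coefficients $z_j^{-1}$): the rescaling/substitution that yields the $\tilde\gamma/\gamma$ weight in the interior must be matched by an analogous substitution on each electrode producing the $\tfrac{z_j}{\tilde z_j}$ weight, and one must check that the combined competitor is still admissible (still lies in $H^1(\Omega)\oplus\R_\diamond^k$ with the correct current normalization). I would handle this by writing the competitor as a coefficient-dependent modification of $(\tilde v,\tilde V)$ — interior part multiplied appropriately so that $\gamma|\nabla(\cdot)|^2$ reproduces $\tfrac{\tilde\gamma^2}{\gamma}|\nabla\tilde v|^2$, and electrode traces adjusted analogously — then expanding, using the Euler--Lagrange equation for $(\tilde v,\tilde V)$ to kill cross terms, and completing the square; the leftover nonnegative square is discarded to yield the inequality. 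Finally I would note that taking $\tilde z = z$ recovers the pure-conductivity monotonicity and, in the limit of vanishing contact impedance and point electrodes, formally degenerates to \eqref{eq:mono}, which serves as a consistency check. The result is then cited as \cite{Harrach15}, so a full write-up may simply invoke that reference after sketching the energy argument.
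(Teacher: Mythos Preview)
The paper does not actually prove this proposition; it merely states it and attributes it to \cite{Harrach15}. Your energy/variational strategy is precisely the one used there, so in spirit you are reproducing the intended argument.

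One concrete slip to flag: for the upper bound you write that ``minimality then gives $I^{\rm T}R(\gamma,z)I \le B_{\gamma,z}((\tilde v,\tilde V),(\tilde v,\tilde V))$''. That inequality is not what the Dirichlet principle for the CEM Neumann problem says. The solution $(v,V)$ minimizes the functional $J(w,W)=B_{\gamma,z}((w,W),(w,W))-2\,I^{\rm T}W$, with minimum value $-I^{\rm T}R(\gamma,z)I$; testing $J$ with $(\tilde v,\tilde V)$ and using $I^{\rm T}\tilde V = I^{\rm T}R(\tilde\gamma,\tilde z)I = B_{\tilde\gamma,\tilde z}((\tilde v,\tilde V),(\tilde v,\tilde V))$ then yields $c_1$ after rearranging. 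Equivalently, expand $0\le B_{\gamma,z}((v-\tilde v,V-\tilde V),(v-\tilde v,V-\tilde V))$ and use the weak formulation to identify the cross term. Either route gives the upper bound cleanly; your stated inequality does not hold as written.

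Your description of the lower bound is vague but on the right track. The ``rescaling'' you allude to is not a global change of competitor but the pointwise completing-the-square inequalities
\[
\gamma|\nabla v|^2 \ge 2\tilde\gamma\,\nabla\tilde v\cdot\nabla v - \frac{\tilde\gamma^2}{\gamma}|\nabla\tilde v|^2,
\qquad
\frac{1}{z_j}|v-V_j|^2 \ge \frac{2}{\tilde z_j}(\tilde v-\tilde V_j)(v-V_j) - \frac{z_j}{\tilde z_j^2}|\tilde v-\tilde V_j|^2,
\]
applied termwise inside $B_{\gamma,z}((v,V),(v,V))$; the cross terms reassemble into $2B_{\tilde\gamma,\tilde z}((\tilde v,\tilde V),(v,V))=2I^{\rm T}V$, and the rest is algebra. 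No admissibility issue arises because you never actually build a modified competitor---the inequality is pointwise.
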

Proposition~\ref{prop:cem-mono} implies, in particular, that counterparts of \eqref{eq:cm-mono} and \eqref{eq:cm-mono-lin-2} hold in the CEM framework.

\subsection{Approximating $\Lambda(\gamma)$ using the measurement map $R(\gamma)$}

The relationship between $\Lambda(\gamma)\in \Ls(L_\diamond^2(\partial\Omega))$ and $R(\gamma) \in \Ls(\R_\diamond^k)$
has been studied e.g.~in \cite{Lechleiter2008a,Hyvonen09}. In what follows, we review the approach in \cite{Hyvonen09} because it is simple to formulate, and it gives a good error estimate. The method relies on the concept of open and mutually disjoint {\em extended electrodes} $\{E_j^+\}_{j=1}^k$ that are assumed (together with the actual electrodes) to satisfy  
\begin{equation}\label{eq:ext-el}
E_j \subseteq E_j^+ \subseteq \partial\Omega, \quad \bigcup_{j=1}^k \overbar{E_{\hspace{-1pt}j}^+} = \partial\Omega, \quad \min_{j=1,\ldots,k} \frac{|E_j|}{|E_j^+|} \geq c
\end{equation} 
where $c>0$ is a constant independent of the set of electrodes in question. The mappings $\Lambda(\gamma)$ and $R(\gamma)$ can be compared with the help of the adjoint pair $Q\colon \R^k \to L^2(\partial\Omega)$ and $\Qast\colon L^2(\partial\Omega) \to \R^k$ defined via
\begin{equation*}
QW = \sum_{j=1}^k W_j \chi_j^+, \quad (\Qast f)_j = \int_{E_j^+} f dS \label{eq:QL-def}
\end{equation*}
where \smash{$\chi_j^+$} denotes the characteristic function of \smash{$E_j^+$}. A thorough motivation of the notion of extended electrodes can be found in \cite{Hyvonen09}.

As indicated by the following theorem, under reasonable assumptions on the regularity of \smash{$E_j^+$} and $\partial\Omega$, the infinite precision EIT data can be approximated using the CEM with an error directly proportional to the maximal extended electrode width. 
\begin{theorem}\label{thm:cem-cm} Let $P \colon L^2(\partial\Omega) \to \R^k$ and $L \colon L^2(\partial\Omega) \to L^2_\diamond(\partial\Omega)$ be the orthogonal projectors\footnote{The former in the sense in which $\R^k$ is identified with the subspace of $L^2(\partial\Omega)$ consisting of piecewise constant functions of form $a = \sum_j a_j\chi_j$ where $\chi_j$ is the characteristic function of $E_j$.} defined via
\begin{equation}\label{eq:P-proj}
(P f)_j := \Xint-_{E_j} f\, dS, \quad L f := f - \Xint-_{\partial\Omega}f dS,
\end{equation}
and denote the maximal extended electrode diameter by \smash{$h = \max_j {\rm diam}(E_j^+)$}. Assume that the extended electrodes are regular enough\footnote{Estimate \eqref{eq:I-LP} could be enforced by assuming \smash{$E_j^+$} are regular enough to allow a ``scaling argument'' resulting in a Poincar\'e inequality with a constant bounded by \smash{$C\,{\rm diam}(E_j^+)$} \cite{Hyvonen09,Lieb2003,Bebendorf2003}. } so that the Poincar\'e inequality-type estimate
\begin{equation}\label{eq:I-LP}
\|(\id - QP)f\|_{L^2(\partial\Omega)} \leq Ch \|\nabla f\|_{L^2(\partial\Omega)}, 
\end{equation}
holds for all $f\in H^1(\partial\Omega)$ with a constant $C > 0$ independent of $h$. Then we have  
\begin{equation}\label{eq:cem-cm}
\|\Lambda(\gamma) - L Q(R(\gamma)-Z)\Qast\,\|_{\Ls(L_\diamond^2({\partial\Omega}))} \leq C h\|\gamma\|_{L^\infty(\Omega)}
\end{equation}
for any $\gamma$ as in Definition~\ref{def:inc} with a constant $C > 0$ independent of $\gamma$ and $h$. Here $Z\in\R^{k\times k}$ is the diagonal matrix with the non-zero entries $Z_{jj} := z_j/|E_j|$.
\end{theorem}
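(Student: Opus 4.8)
The plan is to estimate $\|\Lambda(\gamma) - LQ(R(\gamma)-Z)\Qast\|$ by inserting the CEM solution as an intermediate object and splitting the error into a ``projection'' part and a ``modelling'' part. First I would fix $f \in L^2_\diamond(\partial\Omega)$, let $u$ solve the CM problem \eqref{eq:cm} with Neumann datum $f$, and let $(v,V)$ solve the CEM problem \eqref{eq:cem} with a suitably chosen current pattern $I$ depending on $f$ — the natural choice being $I = \Qast f$ (so that the discrete currents are the extended-electrode averages of $f$), which indeed lies in $\R^k_\diamond$ since $\langle f, \one\rangle = 0$ and $\bigcup_j \overbar{E_j^+} = \partial\Omega$. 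The goal is then to show $\|u|_{\partial\Omega} - LQ(V - Z\Qast f)\|_{L^2(\partial\Omega)} \le Ch\|\gamma\|_{L^\infty}\|f\|_{L^2(\partial\Omega)}$, where the $Z$-term accounts for the contact-impedance voltage drop $z_j/|E_j|$ on each electrode that appears in the Robin condition of \eqref{eq:cem}.

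The key steps, in order: (1) Use the variational formulations of \eqref{eq:cm} and \eqref{eq:cem}, together with Proposition~\ref{prop:cem-mono} in the degenerate case $\tilde\gamma = \gamma$, $\tilde z = z$ (or a direct energy comparison), to control the difference of the two solutions in $H^1(\Omega)$ by the ``mismatch'' between the pointwise Neumann datum $f$ and the piecewise-constant CEM current density $QI/|E_j^+|$ on the boundary; this mismatch is exactly $(\id - QP_+)f$ in a suitable sense, where $P_+$ averages over $E_j^+$. (2) Apply the trace theorem to pass from the $H^1(\Omega)$ bound to an $L^2(\partial\Omega)$ bound on $u|_{\partial\Omega}$ versus the boundary trace of $v$. (3) Compare the boundary trace of $v$ (which on $E_j$ equals $V_j - z_j\,\nu\cdot\gamma\nabla v$) with the piecewise-constant function $QV$, absorbing the $z_j$-term into the $Z$-correction and estimating the remainder using $\int_{E_j}\nu\cdot\gamma\nabla v\,dS = I_j$; the extended-electrode condition $|E_j|/|E_j^+| \ge c$ is what makes this comparison uniform. (4) Invoke the Poincaré-type hypothesis \eqref{eq:I-LP} to convert the quantity $\|(\id - QP)f\|_{L^2(\partial\Omega)}$ (and its analogues) into $Ch\|\nabla f\|$, then remove the $\nabla f$ by a standard density/regularity reduction: it suffices to prove the operator estimate on the dense subspace $H^1(\partial\Omega) \cap L^2_\diamond(\partial\Omega)$ and use that $\Lambda(\gamma)$ maps $L^2_\diamond$ into $H^{1/2}_\diamond$ compactly, or equivalently argue that both operators $\Lambda(\gamma)$ and $LQ(R(\gamma)-Z)\Qast$ are bounded on $L^2_\diamond$ uniformly, so the estimate extends by continuity. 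Throughout, the dependence on $\gamma$ enters only through $\|\gamma\|_{L^\infty}$ (for coercivity/continuity constants and for the energy bounds), giving the stated $\gamma$-independent constant.

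The main obstacle I expect is Step (1)–(3): carefully tracking how the pointwise Neumann datum $f$ and the averaged CEM data are related through the two different weak formulations, and in particular making the appearance of the correction term $Z$ with entries $z_j/|E_j|$ come out cleanly — this requires bookkeeping the contact-impedance boundary layer on each $E_j$ and showing its contribution is exactly captured (up to $O(h)$) by subtracting $Z\Qast$. The functional-analytic reduction in Step (4) is comparatively routine, as is the invocation of \eqref{eq:I-LP}, but one must be slightly careful that $\|\nabla f\|_{L^2(\partial\Omega)}$ does not appear in the final bound; this is handled exactly as in \cite{Hyvonen09}, by noting the left-hand side of \eqref{eq:cem-cm} is a bound between two operators that are uniformly bounded on $L^2_\diamond(\partial\Omega)$, so it suffices to verify \eqref{eq:cem-cm} with an extra factor $\|f\|_{H^1}/\|f\|_{L^2}$ on a dense set and then interpolate/extend. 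A secondary technical point is checking that the chosen $I$ indeed produces the operator $LQ(R(\gamma)-Z)\Qast$ and not some variant — i.e.\ that $\Qast$ on the current side and $Q$ composed with $L$ on the voltage side are the correct adjoint pair, which follows from the self-adjointness of both $\Lambda(\gamma)$ and $R(\gamma)$ and the definition of $Q,\Qast$.
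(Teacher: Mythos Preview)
Your overall strategy is the right one and matches what \cite{Hyvonen09} does (the paper itself simply cites that reference with minor notational adjustments), but Step~(4) contains a genuine gap. The proposed mechanism --- prove the bound with an extra factor $\|f\|_{H^1}/\|f\|_{L^2}$ on a dense set and then ``interpolate/extend'' using uniform $L^2_\diamond$-boundedness of both operators --- does not work: from $\|Af\|_{L^2} \le Ch\|f\|_{H^1}$ and $\|A\|_{\Ls(L^2)} \le M$ you cannot conclude $\|A\|_{\Ls(L^2)} \le Ch$; interpolation only yields $\|Af\|_{L^2}\le (Ch)^\theta M^{1-\theta}\|f\|_{H^\theta}$, which loses the $h$ at $\theta=0$. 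So the $\|\nabla f\|$ term cannot be removed this way.

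The correct route (which you can see spelled out in the paper's proof of the companion Theorem~\ref{thm:cem-cm-lin}) is that the Poincar\'e-type estimate \eqref{eq:I-LP} is applied not to the input $f$ but to the \emph{solution}: one writes $Q(V-ZI)=QPv$ and estimates $\|(\id-QP)v\|_{L^2(\partial\Omega)}\le Ch\|\nabla v\|_{L^2(\partial\Omega)}$, and likewise the energy comparison for $\|u-Lv\|_{H^1(\Omega)}$ produces boundary terms involving $(\id-QP)$ acting on solution traces. The tangential gradient $\|\nabla v\|_{L^2(\partial\Omega)}$ is then controlled by $\|f\|_{L^2(\partial\Omega)}$ via elliptic regularity near $\partial\Omega$ --- and this is precisely where the hypothesis ``$\gamma$ as in Definition~\ref{def:inc}'' (so that $\gamma\in\cc^\infty$ in a collar of $\partial\Omega$) enters, a point your sketch does not identify. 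Once you redirect \eqref{eq:I-LP} onto the solution and invoke boundary regularity, Steps~(1)--(3) go through essentially as you describe and no density reduction is needed.
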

\begin{proof}
The claim follows from \cite[Proof of Theorem 4.1]{Hyvonen09} with the following minor modifications. First of all, note that the projector $\Qast$ and the matrix $Z$ are defined slightly differently here because \cite{Hyvonen09} formulates \eqref{eq:cem} in terms of mean electrode currents instead of the total currents used in this paper. Moreover, for the proof, it is essential that $QZQ^\ast$ is uniformly bounded with respect to $h$. This is ensured by assuming the rightmost condition in \eqref{eq:ext-el}. Finally, in \cite{Hyvonen09} the forward models are formulated in the quotient space framework, that is, choice of ground level potential is circumvented by letting $\Lambda(\gamma)$ and $R(\gamma)$ take values in spaces $L^2(\partial\Omega)/\R$ and $\R^k/\R$, respectively. With the above modifications to $Q$ and $Z$, it is straightforward to see that the whole difference operator in \eqref{eq:cem-cm} equals its counterpart in \cite{Hyvonen09} up to an isometry between $L^2(\partial\Omega)/\R $ and $L^2_\diamond(\partial\Omega)$.
\end{proof} 


Above it was pointed out that, under suitable assumptions, the CEM provides a linearly convergent approximation to the CM as the number of electrodes grows in a suitable manner. Next we point out that the same holds true also for the linearized versions of the models. 

\begin{theorem}\label{thm:cem-cm-lin}
Let $\eta\in L^\infty(\Omega)$ be compactly supported in $\Omega$. Under the same assumptions as in Theorem \ref{thm:cem-cm}, there holds 
\begin{equation*}
\|\Lambda'(\gamma)\eta - LQ(R'(\gamma)\eta)\Qast\,\|_{\Ls(L_\diamond^2({\partial\Omega}))} \leq C h \|\gamma\|_{L^\infty(\Omega)} \|\eta\|_{L^\infty(\Omega)},
\end{equation*}
where $h$ is the maximal diameter of $E_j^+$ and $C>0$ is a constant independent of $h$, $\gamma$, and $\eta$.
\end{theorem}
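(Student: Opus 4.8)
The plan is to reduce the linearized estimate to the already-established estimate \eqref{eq:cem-cm} of Theorem~\ref{thm:cem-cm} by differentiating the operator inequality in the conductivity, and then controlling the derivative of the approximation error uniformly. Concretely, fix $\gamma$ as in Definition~\ref{def:inc} and a compactly supported $\eta\in L^\infty(\Omega)$. For $t$ in a small real neighbourhood of $0$ the perturbed conductivity $\gamma+t\eta$ still lies in $L^\infty_+(\Omega)$, so both $\Lambda(\gamma+t\eta)$ and $R(\gamma+t\eta)$ are defined, and by the analyticity of $\gamma\mapsto\Lambda(\gamma)$ recalled in Appendix~\ref{append:B} (and the analogous smooth dependence for the CEM measurement map, cf.~\cite{Somersalo1992,Hyvonen2004}) the maps $t\mapsto\Lambda(\gamma+t\eta)$ and $t\mapsto LQ(R(\gamma+t\eta)-Z)\Qast$ are differentiable at $t=0$ with derivatives $\Lambda'(\gamma)\eta$ and $LQ(R'(\gamma)\eta)\Qast$ respectively; note the subtraction of $Z$ disappears upon differentiation since $Z$ does not depend on $\gamma$. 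The operator $E(t) := \Lambda(\gamma+t\eta) - LQ(R(\gamma+t\eta)-Z)\Qast$ therefore satisfies $E'(0) = \Lambda'(\gamma)\eta - LQ(R'(\gamma)\eta)\Qast$, which is the quantity to be bounded.

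The key step is then to bound $\|E'(0)\|$ by a Cauchy-type estimate. Since $E(\cdot)$ extends to a holomorphic $\Ls(L^2_\diamond(\partial\Omega))$-valued function on a complex disc $|t|\le r$ (with $r$ depending only on $\mathrm{ess\,inf}\,\gamma$, $\|\gamma\|_{L^\infty}$ and $\|\eta\|_{L^\infty}$, so that $\mathrm{Re}(\gamma+t\eta)$ stays bounded below), the Cauchy integral formula gives
\begin{equation*}
\|E'(0)\|_{\Ls(L^2_\diamond(\partial\Omega))} \le \frac{1}{r}\sup_{|t|=r}\|E(t)\|_{\Ls(L^2_\diamond(\partial\Omega))}.
\end{equation*}
By Theorem~\ref{thm:cem-cm} applied to each $\gamma+t\eta$ (which is of the form in Definition~\ref{def:inc} with the same $D$ and a modified $\kappa$, and whose $L^\infty$-norm is controlled by $\|\gamma\|_{L^\infty} + r\|\eta\|_{L^\infty}$), we obtain $\|E(t)\| \le Ch(\|\gamma\|_{L^\infty} + r\|\eta\|_{L^\infty})$ for all $|t|\le r$ with $C$ independent of $h$, $t$, $\gamma$, $\eta$. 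Choosing $r$ proportional to $\mathrm{ess\,inf}\,\gamma / \|\eta\|_{L^\infty}$ — the largest radius for which the ellipticity lower bound survives — makes $1/r$ proportional to $\|\eta\|_{L^\infty}/\mathrm{ess\,inf}\,\gamma$, and then $r\|\eta\|_{L^\infty}$ is comparable to $\mathrm{ess\,inf}\,\gamma \le \|\gamma\|_{L^\infty}$, so the product $\tfrac{1}{r}\cdot Ch(\|\gamma\|_{L^\infty}+r\|\eta\|_{L^\infty})$ collapses to $Ch\|\gamma\|_{L^\infty}\|\eta\|_{L^\infty}$ up to a harmless constant, which is the asserted bound.

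The main obstacle is bookkeeping the dependence of the constant $C$ in Theorem~\ref{thm:cem-cm} on $\gamma$: the statement asserts $C$ is independent of $\gamma$ and $h$, but one should verify that it is genuinely uniform over the whole complex family $\{\gamma+t\eta : |t|\le r\}$ — in particular uniform in the ellipticity constant $\mathrm{ess\,inf}$, which degrades as $r$ grows. This forces the careful choice of $r$ above so that the ellipticity constant of $\gamma+t\eta$ stays bounded below by, say, $\tfrac12\,\mathrm{ess\,inf}\,\gamma$; one must also check that the Poincaré-type hypothesis \eqref{eq:I-LP} is a statement about $\partial\Omega$ and the extended electrodes only, hence untouched by the perturbation. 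A secondary point is justifying the holomorphic extension of $t\mapsto R(\gamma+t\eta)$ in the CEM setting; this follows from the same Neumann-series / implicit-function-theorem argument that underlies the analyticity in Appendix~\ref{append:B}, applied to the CEM bilinear form, together with the observation that $Q$, $\Qast$, $L$ are fixed bounded operators. Once these uniformity issues are settled, the Cauchy estimate delivers the result with essentially no further computation.
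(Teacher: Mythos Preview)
Your Cauchy-estimate strategy is genuinely different from the paper's route. The paper works directly with the sensitivity problems \eqref{eq:fre-cm}--\eqref{eq:fre-cem}: writing $u',(v',V')$ for the linearized solutions with input $f$ and $I=\Qast f$, it shows $QV'=QPv'$, splits $\|u'-LQV'\|_{L^2(\partial\Omega)}$ into $\|u'-Lv'\|_{H^1(\Omega)}$ plus $\|(\id-QP)v'\|_{L^2(\partial\Omega)}$, bounds the first term via coercivity, the variational identities for $u'$ and $v'$, and the interior estimate $\|u-Lv\|_{H^1}\le Ch\|\gamma\|_{L^\infty}\|f\|_{L^2}$ extracted from the proof of Theorem~\ref{thm:cem-cm}, and handles the second via \eqref{eq:I-LP} together with an elliptic boundary-regularity argument for $v'$ near $\partial\Omega$ --- which is precisely where the compact support of $\eta$ and the near-boundary smoothness of $\gamma$ enter. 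If your route worked, it would be slicker and would immediately extend to higher derivatives (cf.~Remark~\ref{rem:cem-cm-lin}).

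However, there is a real gap. The Cauchy estimate needs $\|E(t)\|\le Ch(\|\gamma\|_{L^\infty}+r\|\eta\|_{L^\infty})$ on the \emph{circle} $|t|=r$, hence for complex $t$; but Theorem~\ref{thm:cem-cm} is stated only for real conductivities in $L^\infty_+(\Omega)$ (indeed, of the form in Definition~\ref{def:inc}), and $\gamma+t\eta$ with $t\notin\R$ is complex-valued. You flag the need to ``verify uniformity over the whole complex family'', but this is not bookkeeping of constants: it amounts to proving a complex-admittivity version of Theorem~\ref{thm:cem-cm}, i.e.\ re-running the argument of \cite[Theorem~4.1]{Hyvonen09} under a sectorial rather than positivity hypothesis on the coefficient. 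That is plausible, but it is a parallel proof, not a citation --- and once you open that box you are doing essentially the same boundary-regularity work that the paper's direct argument performs anyway. A minor inaccuracy worth noting: $\gamma+t\eta$ is \emph{not} of the form in Definition~\ref{def:inc} ``with the same $D$ and a modified $\kappa$'' unless $\mathrm{supp}\,\eta\subseteq D$, which is not assumed; this is easily repaired, though, since what Theorem~\ref{thm:cem-cm} actually uses is only that the conductivity coincides with the smooth $\gamma_0$ near $\partial\Omega$, and that property survives the perturbation.
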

\begin{proof}
Let $u$ be the solution to \eqref{eq:cm} with boundary condition $f\in L_\diamond^2({\partial\Omega})$ and let $(v,V)$ solve \eqref{eq:cem} with the input current $I= \Qast f$. Denote by $u'$ the solution to the sensitivity problem \eqref{eq:fre-cm} and by $(v',V')$ the solution to \eqref{eq:fre-cem}. In the following, the operator $L$ is also considered an operator from $H^1(\Omega)$ to $H_\diamond^1(\Omega)$ in the sense of subtracting the mean of the trace; cf. \eqref{eq:P-proj}. 

By the boundary conditions of \eqref{eq:fre-cem}, we have 
\begin{equation}\label{eq:LV}
QV' = \sum_{j=1}^k \chi^+_j\Xint-_{E_j}(v' + z_j \nu\cdot\gamma\nabla v')  dS = \sum_{j=1}^k \chi^+_j \Xint-_{E_j} v' dS =   QPv'.
\end{equation}
By \eqref{eq:LV}, the triangle inequality, and by applying the trace theorem and the fact $\|L\|_{\Ls(L^2(\partial\Omega))} = 1$, we get
\begin{align}
\|u' - LQV'\|_{L^2(\partial\Omega)} &\leq \|u'-Lv'\|_{L^2(\partial\Omega)} + \|L(\id - QP)v'\|_{L^2(\partial\Omega)} \notag\\[4pt]
&\leq  C\|u'-Lv'\|_{H^1(\Omega)} + \|(\id - QP)v'\|_{L^2(\partial\Omega)}.\label{eq:cem-cm-lin-1}
\end{align}
Next we estimate the first term on the right side of \eqref{eq:cem-cm-lin-1}. By coercivity of the bilinear form associated with \eqref{eq:cm}, we have
\begin{equation}\label{eq:bundle-1}
\|u' - Lv'\|_{H^1(\Omega)}^2 \leq C \int_\Omega \gamma |\nabla (u'-v')|^2 \, dx.
\end{equation}
Furthermore, using the variational formulations of the sensitivity problems \eqref{eq:fre-cm} and \eqref{eq:fre-cem}, we obtain
\begin{align}\label{eq:bundle-2}
\int_\Omega \gamma |\nabla (u'-v')|^2 \, dx & = \int_\Omega \eta \nabla (Lv-u) \cdot \nabla (u'-Lv') \, dx \nonumber\\ 
 &\phantom{{}={}}+ \sum_{j=1}^k \int_{E_j}\frac{1}{z}(v'-V'_j)(u'-Lv') \, dS.
\end{align}
Note that, in the three instances above, $v$ has been replaced by $Lv$ and $v'$ by $Lv'$, respectively. While in the interior term the replacements are trivially justified, the boundary term follows from the fact that --- by the boundary conditions \eqref{eq:fre-cem} --- we have
\[
\int_{E_j}\frac{1}{z}(v' - V_j')c \, dS = c\int_{E_j} \nu\cdot\gamma\nabla v' dS = 0 
\]
for all scalars $c$. Inserting \eqref{eq:bundle-2} into \eqref{eq:bundle-1}, and applying Cauchy--Schwartz inequality and trace theorem to the right-hand quantity results in 
\begin{equation}\label{eq:cem-cm-lin-2}
\|u' - Lv'\|_{H^1(\Omega)} \leq C \Big(\|\eta\|_{L^\infty(\Omega)}\|u-Lv\|_{H^1(\Omega)} + \sum_{j=1}^k\|v' - V_j'\|_{L^2(E_j)}\Big).
\end{equation}
The first term on the right side of \eqref{eq:cem-cm-lin-2} can be estimated suitably by using 
\begin{equation}
\|u-Lv\|_{H^1(\Omega)} \leq Ch\|\gamma\|_{L^\infty(\Omega)}\|f\|_{L^2(\partial\Omega)} \label{eq:uv-est}
\end{equation} 
which follows from the proof of Theorem \ref{thm:cem-cm}. Since by \eqref{eq:LV} we have
\[
\|v'-V_j'\|_{L^2(E_j)} = \|v'-QV'\|_{L^2(E_j)} \leq \|(\id - QP)v'\|_{L^2(\partial\Omega)},
\]
the second term on the right side of \eqref{eq:cem-cm-lin-2} can be handled by working out an appropriate upper bound for $\|(\id - QP)v'\|_{L^2(\partial\Omega)}$.

At this point, to guarantee sufficient regularity for $v'$, we need the assumptions that $\gamma$ is as in Definition \ref{def:inc} and $\eta$ is compactly supported in $\Omega$ (cf.~\eqref{eq:fre-cem}). Under these hypotheses, an analogous argument to \cite[Theorem 2.1]{Hyvonen2014} --- ultimately based on elliptic regularity theory \cite{Lions1972} and the boundary conditions \eqref{eq:fre-cem} --- implies that there exists a relatively open connected set $U\subseteq \overbar{\Omega}$ such that 
\begin{equation}\label{eq:v'32}
\partial\Omega \subseteq \partial U, \quad \overbar{U}\cap ({\rm supp}\, \eta \cup \overbar{D}) = \emptyset, \quad \|\nu\cdot \gamma\nabla v'\|_{L^2(\partial U)} \leq C \|v'\|_{H^1(\Omega)}.
\end{equation} 
Consequently, applying \eqref{eq:I-LP}, the definition of the $H^1$-norm, the trace theorem for quotient spaces \cite{Hyvonen2004}, continuous dependence on Neumann-data \cite[\S 1 Theorem 7.4]{Lions1972}, the rightmost estimate of \eqref{eq:v'32}, and continuity properties of \eqref{eq:fre-cem} leads us to the estimate 
\begin{align} 
\|(\id - QP)v'\|_{L^2(\partial\Omega)} & \leq Ch \|\nabla v'\|_{L^2(\partial\Omega)} \notag\\[8pt]
& \leq Ch \inf_{c\in \R}\|v' + c\|_{H^1(\partial\Omega)} \notag\\[4pt]
& \leq Ch \inf_{c\in \R}\|v' + c\|_{H^{3/2}(U)} \notag \\[4pt] 
& \leq C h \|\nu\cdot\gamma\nabla v'\|_{L^2(\partial U)} \notag\\[8pt]
& \leq C h \|v'\|_{H^1(\Omega)} \notag \\[8pt]
& \leq Ch\|\gamma\|_{L^\infty(\Omega)}\|\eta\|_{L^\infty(\Omega)}\|f\|_{L^2(\partial\Omega)}. \label{eq:I-LP-est}
\end{align}
In conclusion, using \eqref{eq:cem-cm-lin-1}--\eqref{eq:I-LP-est} we have shown that
\[
\begin{split}
&\|(\Lambda(\gamma)'\eta)f - LQ(R'(\gamma)\eta)Q^*f\|_{L^2(\partial\Omega)} \\[4pt]
&\qquad\qquad\qquad\qquad\leq C \|u'-Lv'\|_{H^1(\Omega)} +  \|(\id - QP)v'\|_{L^2(\partial\Omega)}\\[4pt] 
&\qquad\qquad\qquad\qquad \leq C\left( \|\eta\|_{L^\infty(\Omega)}\|u-Lv\|_{H^1(\Omega)} +  \|(\id - QP)v'\|_{L^2(\partial\Omega)} \right)\\[4pt]
&\qquad\qquad\qquad\qquad \leq Ch\|\gamma\|_{L^\infty(\Omega)}\|\eta\|_{L^\infty(\Omega)}\|f\|_{L^2(\partial\Omega)}
\end{split}
\]
and the proof is concluded. \qed
\end{proof}
\begin{remark}\label{rem:cem-cm-lin}
Analogous argumentation can be used to generalize Theorem~\ref{thm:cem-cm-lin} for higher order Fr\'echet derivatives; cf.~Proposition \ref{prop:fre} in Appendix B.
\end{remark}

According to Theorems \ref{thm:cem-cm}--\ref{thm:cem-cm-lin}, the operators of the form
\begin{equation*}
	\Lambda_h(\gamma) = LQ(R(\gamma)-Z)Q^*, \quad \Lambda_h'(\gamma) = LQR'(\gamma)\Qast
\end{equation*}
satisfy \eqref{eq:h-Lambda} and \eqref{eq:h-linLambda}, respectively.
Given a noisy measurement map $R^\delta(\gamma)$, the semidefiniteness test applied to the operators
\[
	LQ(R(\gamma_0+\beta\chi_B)-R^\delta(\gamma))Q^*, \quad LQ(R(\gamma_0) + \beta R'(\gamma_0)\chi_B + R^\delta(\gamma))\Qast
\]
where $B\subseteq \Omega$ is an open ball, satisfies the asymptotic characterization property of Theorem~\ref{thm:h-mono}. Note also that the noise does not get amplified since the operator norms of $Q$ (and $Q^*$) and $L$ are obviously bounded by $\max_{j=1,\dots,k} |E^+_j|^{1/2}$ and $1$, respectively. 

The following lemma shows that equivalent semidefiniteness tests can be carried out without constructing the projection operators, but instead, just using the electrode measurement map. 
\begin{lemma} \label{prop:posdef-equiv}
Let $A : \R_\diamond^k \to \R_\diamond^k$ be arbitrary. Then for any $f,g\in L_\diamond^2(\partial\Omega)$ we have
\begin{equation}\label{eq:inner-equiv}
	\inner{LQAQ^*f,g} = AQ^*f \cdot Q^*g.
\end{equation}
As a consequence, the conditions
\begin{equation*}
	LQAQ^* \geq 0 \ \ {\it and} \ \  A \geq 0
\end{equation*}
are equivalent.
\end{lemma}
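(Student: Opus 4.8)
The plan is to establish the inner-product identity \eqref{eq:inner-equiv} by a direct computation unwinding the definitions of $L$, $Q$, and $Q^*$, and then deduce the equivalence of the semidefiniteness conditions as an immediate corollary. First I would observe that $L$ is the orthogonal projector of $L^2(\partial\Omega)$ onto $L^2_\diamond(\partial\Omega)$, so that for $g \in L^2_\diamond(\partial\Omega)$ one has $\inner{Lw, g} = \inner{w, Lg} = \inner{w, g}$; hence $\inner{LQAQ^*f, g} = \inner{QAQ^*f, g}$ and the projector $L$ disappears. The remaining step is to recognize that $Q$ and $Q^*$ are genuinely adjoint to one another as maps between $\R^k$ (with the Euclidean inner product) and $L^2(\partial\Omega)$: indeed, for $W \in \R^k$ and $h \in L^2(\partial\Omega)$,
\[
\inner{QW, h} = \sum_{j=1}^k W_j \int_{E_j^+} h \, dS = \sum_{j=1}^k W_j (Q^*h)_j = W \cdot Q^*h,
\]
which is exactly the defining relation of the adjoint pair. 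Applying this with $W = AQ^*f$ and $h = g$ gives $\inner{QAQ^*f, g} = (AQ^*f)\cdot Q^*g$, completing the proof of \eqref{eq:inner-equiv}.

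For the equivalence of the two positivity conditions, I would argue in both directions. If $A \geq 0$ on $\R^k_\diamond$, then taking $g = f$ in \eqref{eq:inner-equiv} yields $\inner{LQAQ^*f, f} = AQ^*f \cdot Q^*f \geq 0$ for every $f \in L^2_\diamond(\partial\Omega)$, since $Q^*f \in \R^k_\diamond$ (note $\sum_j (Q^*f)_j = \int_{\cup E_j^+} f\,dS = \int_{\partial\Omega} f\,dS = 0$ by the covering property in \eqref{eq:ext-el} and $f \in L^2_\diamond(\partial\Omega)$); hence $LQAQ^* \geq 0$. Conversely, suppose $LQAQ^* \geq 0$. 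Given an arbitrary $W \in \R^k_\diamond$, I would need to produce an $f \in L^2_\diamond(\partial\Omega)$ with $Q^*f = W$; the natural candidate is $f = \sum_j (W_j/|E_j^+|)\chi_j^+$, for which a direct computation gives $(Q^*f)_\ell = \int_{E_\ell^+} f\,dS = W_\ell$ (using that the $E_j^+$ are mutually disjoint, so the integral over $E_\ell^+$ only sees the $\ell$-th term), and $\inner{f,\one} = \sum_j W_j = 0$ so indeed $f \in L^2_\diamond(\partial\Omega)$. Then $0 \leq \inner{LQAQ^*f, f} = AW\cdot W$, establishing $A \geq 0$.

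The only mild subtlety — and the closest thing to an obstacle — is making sure the surjectivity of $Q^*$ onto $\R^k_\diamond$ is used cleanly in the converse direction, and in particular that the explicit preimage $f$ lands in $L^2_\diamond(\partial\Omega)$ rather than just $L^2(\partial\Omega)$; this is where the disjointness and covering conditions on the extended electrodes in \eqref{eq:ext-el} enter. Everything else is a routine unwinding of definitions, so I would keep the exposition terse, stating the identity, noting the two membership facts $Q^*f \in \R^k_\diamond$ and $f \in L^2_\diamond(\partial\Omega)$, and invoking the standard characterization of positive semidefiniteness of a self-adjoint operator via its quadratic form.
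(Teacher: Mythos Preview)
Your proof is correct and follows essentially the same approach as the paper: both remove $L$ by self-adjointness together with $L|_{L^2_\diamond(\partial\Omega)} = \id$, then use the adjointness of $Q$ and $Q^*$ to obtain \eqref{eq:inner-equiv}, and for the converse direction both construct the explicit preimage $f = \sum_j (W_j/|E_j^+|)\chi_j^+$. Your version is in fact slightly more careful than the paper's, in that you explicitly verify $Q^*f \in \R^k_\diamond$ and $f \in L^2_\diamond(\partial\Omega)$ using the covering and disjointness properties of the extended electrodes.
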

\begin{proof}
	Clearly, $L$ is self-adjoint when considered an operator from $L^2(\partial\Omega)$ to itself. Moreover, since $L|_{L_\diamond^2(\partial\Omega)} = \id $, we have
\[
	\inner{LQAQ^*f,g} = AQ^*f\cdot Q^*Lg = AQ^*f\cdot Q^*g. 
\]
for any $f,g\in L^2_\diamond(\partial\Omega)$, that is, \eqref{eq:inner-equiv}. In particular, it follows that $A \geq 0$ implies $LQAQ^* \geq 0$. 
	
To show the converse, assume that $LQAQ^* \geq 0$. As a consequence of \eqref{eq:inner-equiv}, it is sufficient to show that for any $I\in \R_\diamond^k$ there exists an $f_I\in L^2_\diamond(\partial\Omega)$ such that $I = Q^*f_I$. For example, the function defined by
\[
	f_I = \sum_{j=1}^k \frac{\chi_j^+}{|E_j^+|}I_j,
\]
has the desired property. \qed
\end{proof}
Let us finish the section with a short recap. We have demonstrated that, under the assumption \eqref{eq:I-LP}, the CEM can be used to construct a sequence of monotonicity reconstructions that converge to the infinite precision counterpart in the sense of Theorem~\ref{thm:h-mono}. Although the assumptions of Theorem~\ref{thm:h-mono} do not require the approximative operators to satisfy a monotonicity principle, Proposition~\ref{prop:cem-mono} shows that the CEM measurement map nevertheless has this favourable property (cf. Remark~\ref{rem:h-mono}). We also emphasize that, by Proposition~\ref{prop:posdef-equiv}, the operators $Q$ and $L$ do not have a practical role in terms of implementing the algorithm --- they are only needed for forming a theoretical connection between the CEM and CM-based semidefiniteness tests. 

\section{Algorithmic implementation} \label{sec:alg-impl}

In this section we formulate two slightly different algorithms for reconstructing strictly positive (or negative) conductivity inclusions. The modifications that enable reconstructing indefinite inclusions are left for future studies. To highlight the fact that in practice the ``true'' conductivity is unknown, we denote the noisy measurement data by $R^\delta$ omitting the dependence on $\gamma$. Moreover, let us remind that we make the arguably restrictive assumption that the background conductivity $\gamma_0$ is known a priori.

\begin{algorithm}[Monotonicity method for the CEM]\label{alg:1}
\quad 
\begin{itemize}
\item[0.] Fix the collection of balls $\mathcal{B}$, choose the regularization parameter $\alpha > 0$, and compute $R(\gamma_0)$ and $R'(\gamma_0)$.\\[-4pt]
\item[1.] For all $B = B(x,r)\in \mathcal{B}$, construct the indicator 
\begin{equation}\label{eq:ind-1}
{\rm Ind}(x) := \max\{0,\min\sigma(R(\gamma_0) + \beta R'(\gamma_0)\chi_B - R^\delta + \alpha\id)\}
\end{equation}

\item[2.] Return ${\rm Ind}$.

\end{itemize}
\end{algorithm}
A logical choice for the probing scalar is
\begin{equation*}
\beta = {\rm ess}\inf \Big( \frac{\gamma_0\kappa}{\gamma} \Big).
\end{equation*} 
In practice a scalar satisfying $\beta \leq {\rm ess}\inf ( \gamma_0\kappa/\gamma )$ can be constructed from knowledge of lower and upper bounds on $\kappa$.
Moreover, we propose constructing the regularization parameter in the form
\begin{equation}\label{eq:reg-param-num}
\alpha = \alpha(h,\delta) = -\mu \min \sigma(R(\gamma_0) - R^\delta), \quad \mu \approx 1.
\end{equation}
The value of $\mu$ is to be tuned --- according to our numerical results typically very close to $1$. The idea behind this choice comes from the following heuristic argument. According to the adaptation of Theorem~\ref{thm:h-mono} to the linearized monotonicity method, the following holds. If the regularization parameter satisfies
\[
\alpha(h,\delta) \geq \delta - \inf_{B\in\Mcal_0'} \inf \sigma(\Lambda_h(\gamma_0) + \beta \Lambda_h'(\gamma_0)\chi_B - \Lambda_h(\gamma)),
\]
then the reconstruction forms an upper bound for $\Mcal_0'$. However, recalling that $\beta\Lambda_h'(\gamma_0)\chi_B \leq 0$ \cite[Corollary 3.4]{Harrach13} and $N^\delta \leq \delta$, we see that  
\begin{align}\label{eq:reg-ineq}
-\inf\sigma(\Lambda_h(\gamma_0) - \Lambda_h^\delta(\gamma)) 
&\leq \delta - \inf\sigma(\Lambda_h(\gamma_0) - \Lambda_h(\gamma)) \nonumber\\[4pt]
& \leq \delta - \inf_{B\in\Mcal_0'}\inf\sigma(\Lambda_h(\gamma_0) + \beta\Lambda_h'(\gamma_0)\chi_B - \Lambda_h(\gamma)).
\end{align}
The idea is to barely reverse the inequality \eqref{eq:reg-ineq} by multiplying with a suitable $\mu$ such that the resulting regularization parameter $\alpha$ satisfies \eqref{eq:reg-param} while being small enough. Rigorous association of \eqref{eq:reg-param-num} with the reversal of \eqref{eq:reg-ineq} would of course require taking the operators $L$, $Q$ and $Q^*$ into the consideration. Further elaboration of the choice of the regularization parameter are left for future studies.

According to our numerical tests, the output of Algorithm \ref{alg:1} is very sensitive to the choice of the regularization parameter: Choosing slightly too large $\alpha$ tends to result in crudely overestimated supports, whereas even a bit too small $\alpha$ yields vanishing reconstructions. An idea for increasing the flexibility of the method is to fix sufficiently large $\alpha$, and probe monotonicity using various values of $\beta$. Intuitively this seems natural, as increasing $\beta$ will tighten the monotonicity test and in turn (hopefully) sharpen up the reconstruction. This gives rise to the following algorithm.
%

\begin{algorithm}[A flexible monotonicity method for the CEM]\label{alg:2}
\quad 
\begin{itemize}
\item[0.] Fix the collection of balls $\mathcal{B}$ and choose the regularization parameter $\alpha > 0$, and compute $R(\gamma_0)$ and $R'(\gamma_0)$. Fix a collection $\{\beta_j\}_{j=1}^m \subset \R$ of increasing values for probing semidefiniteness, and set $j = 1$.\\[-4pt]
\item[1.] For all $B = B(x,r)\in \mathcal{B}$, construct  
\begin{equation}\label{eq:ind-2}
{\rm Ind}_j(x) := 
\begin{cases}
1, &{\it if} \  \min\sigma(R(\gamma_0) + \beta_j R'(\gamma_0)\chi_B - R^\delta + \alpha\id) \geq 0,\\
0, &{\it otherwise}.
\end{cases}
\end{equation}
\item[2.] Set $j \to j + 1 $, redefine $\mathcal{B} \to \mathcal{B} \setminus \{B(x,r) \in \mathcal{B} \,\colon {\rm Ind}_j(x) = 0\} $, and go back to step 1.\\[-4pt]
\item[3.] If $\mathcal{B} = \emptyset$, return the indicator
\[
{\rm Ind} := \sum_{j=1}^m{\rm Ind}_j.
\]
\end{itemize}
\end{algorithm}
Note that the idea of ``Step 2'' in Algorithm \ref{alg:2} is to speed up the computations by discarding excess test balls --- this is justified by the monotonicity property of the CEM measurement map. Typically, if $\alpha > 0$ is not very large and the inclusions are small, the running times of the Algorithms 1--2 are essentially the same.

\section{Numerical experiments} \label{sec:numerical}

We proceed with four numerical examples which test the implementation of Algorithms~\ref{alg:1}--\ref{alg:2} from the following point of views. The idea of the first example is to test whether the reconstruction of a fixed (non-convex) inclusion sharpens up as the number of measurement electrodes increase. Here, for comparison, adaptations of Algorithms~\ref{alg:1}--\ref{alg:2} are applied also in the CM framework. In the second example, the algorithms are applied in a two-dimensional geometrical setting to synthetic data with and without additive artificial random noise. In the third example Algorithm~\ref{alg:2} is applied to real-life data measured on a cylindrically symmetric water tank phantom. Due to the symmetry, the reconstruction is carried out in two spatial dimensions. The last example is an application of Algorithm~\ref{alg:1} to synthetic exact data simulated in three spatial dimensions. 

In all of the two-dimensional experiments the object $\Omega$ is unit disk-shaped, and the electrodes are equispaced and of equal length, covering in total half of the boundary. In the three-dimensional experiment, the domain is a unit ball with spherical cap-shaped electrodes that are placed approximately equidistantly. Moreover, in all simulated examples, the contact impedance is given a constant value $z_j = 0.1$, $j = 1,2,\ldots,k$.

The numerical implementation of Algorithms~1--2 is based on the following linear algebra. Let $\{I^{(j)}\}_{j=1}^{k-1}$ be a basis of $\R_\diamond^k$ and denote 
$$
{\bf I} = [I^{(1)},I^{(2)},\ldots,I^{(k-1)}].
$$ 
Then a matrix representation of $R(\gamma_0) + \beta R'(\gamma_0)\chi_B - R(\gamma)$ in this basis is given by 
\begin{equation*}
{\bf A} = {\bf I}^\dagger ({\bf X + V}) \in \R^{(k-1)\times (k-1)},
\end{equation*}
where ${\bf I}^\dagger = ({\bf I}^{\rm T} {\bf I})^{-1}{\bf I}^{\rm T}$ is the Moore--Penrose pseudoinverse of ${\bf I}$,
$$
{\bf X} = [X^{(1)},X^{(2)},\ldots,X^{(k-1)}], \quad {\bf V} = [V^{(1)},V^{(2)},\ldots,V^{(k-1)}],
$$
with $X^{(j)} = (R(\gamma_0) + \beta R'(\gamma_0)\chi_B)I^{(j)}$ and $V^{(j)} = R(\gamma)I^{(j)}$.  Evaluation of the indicator functions in \eqref{eq:ind-1} and \eqref{eq:ind-2} is based on computing the smallest eigenvalue of 
\begin{equation*}
{\bf A}^{\hspace{-2pt}\delta} = {\bf I}^\dagger ({\bf X} + {\bf V}^\delta),
\end{equation*} 
where 
\begin{equation*}
{\bf V}^\delta = {\rm Sym}(\widetilde{\bf V}^\delta{\bf I}^\dagger){\bf I}
\end{equation*}
and $\widetilde{\bf V}^\delta$ is the noisy measurement data. Here ${\rm Sym}$ denotes the symmetric part, and it is applied to ensure that the underlying noisy data comprise a symmetric matrix. In addition, each column of ${\bf \widetilde{V}^\delta}$ and ${\bf V}^\delta$ is enforced to be in $\R_\diamond^k$. Noise is simulated by setting 
\begin{equation}\label{eq:noisemodel}
\widetilde{\bf V}^\delta = {\bf V} + {\bf N},
\end{equation} 
where each entry of ${\bf N}$ is given by $N_{ij} = V_i^{(j)} Y_{ij}$ and $Y_{ij}$ is a drawn from normal distribution of mean zero and standard deviation $5\cdot 10^{-3}$. In the simulated noisy examples the ratio 
\begin{equation*}
\|{\bf V - V^\delta}\|_F/\|{\bf V}\|_F
\end{equation*}
of Frobenius norms is called the ``relative error''. 

The measurement maps and their Fr\'echet derivatives are approximated by a standard finite element method \cite{Kaipio2000} using piecewise quadratic ($\mathbb{P}_2$) and piecewise affine ($\mathbb{P}_1$) elements, in two and three spatial dimensions, respectively. Moreover, the conductivity distribution is discretized by simplex-wise constant ($\mathbb{P}_0$) elements. The computational domain is a polygonal discretization of a unit disk/ball. In the two-dimensional examples, the meshes used in simulations and reconstructions consist of approximately $2.7\times 10^6$ nodes and $1.3\times 10^6$ triangles, and $7.5\times 10^5$ nodes and $3.7\times 10^5$ triangles, respectively. In the three-dimensional examples the simulations are performed on a mesh with $1.2\times 10^5$ nodes and $6.9\times 10^5$ tetrahedrons, while the reconstructions are computed using a mesh with $4.9\times 10^4$ nodes and $2.9\times 10^5$ tetrahedrons. With the above dimensions, an average reconstruction computation time using a laptop with two Intel Core 2 processors with CPU clock rate 2.4 Ghz was around five seconds.

As our FEM model is based on $\mathbb{P}_0$-discretization of the conductivity, the characteristic functions of the test sets $B$ are approximated by 
\begin{equation}\label{eq:chiB}
\chi_B \approx \sum_{K\subseteq B} \chi_K,
\end{equation}
where $K$ are simplices in the mesh. While carrying out the computations, some subtlety regarding to the choice of the test sets was observed. In particular, using very small balls yields artifactual reconstructions. This is not surprising since, for a fixed mesh, \eqref{eq:chiB} is a bad approximation when the radius of $B$ is small. To ease visualization of the results, we do not use disks/balls as computational test sets. In two-dimensions, the test sets are chosen from a regular hexagonal tiling of the plane. In the three-dimensional computations, we use voxels. We emphasize that convergence result analogous to Theorem~\ref{thm:h-mono} can be generalized to various types of measurable subsets of $\Omega$.

\begin{example}\label{ex:1}

In this numerical example we compute linearized monotonicity reconstructions with respect to different numbers of electrodes. For comparison, we use both CEM and discretized CM as forward models. No extra artificial noise is added to the synthesized data. 

The (noiseless) discretized CM is formulated as a truncated matrix approximation as follows. For a set of linearly independent boundary current densities $\{f^{(m)}\}_{m=1}^p\subseteq L_\diamond^2(\partial\Omega)$, we set
\begin{equation*}
A_{\ell m} = \int_{\Omega} (\gamma_0 - \beta\chi_B) \nabla u_0^{(\ell)}\cdot \nabla u_0^{(m)} dx - \inner{\Lambda(\gamma)f_\ell,f_m}
\end{equation*}
where $u_0^{(m)}$ solves \eqref{eq:cm} for the conductivity $\gamma_0$ and the current density $f^{(m)}$, $m=1,2,\ldots,p$. By \eqref{eq:cm}, \eqref{eq:fre-cm}, and Green's formula, the matrix ${\bf A} = \{A_{\ell m}\}_{\ell,m=1}^p$ is a discretization of $\Lambda(\gamma_0) + \beta\Lambda'(\gamma_0)\chi_B - \Lambda(\gamma)$.
\begin{figure}[h]
\begin{tabular}{lll}
\hspace{.9cm}{\scriptsize {\bf Target}}&
\hspace{.7cm}{\scriptsize {\bf Noiseless~1}}&
\hspace{.7cm}{\scriptsize {\bf Noiseless~2}}\\
\includegraphics[width=0.3\textwidth]{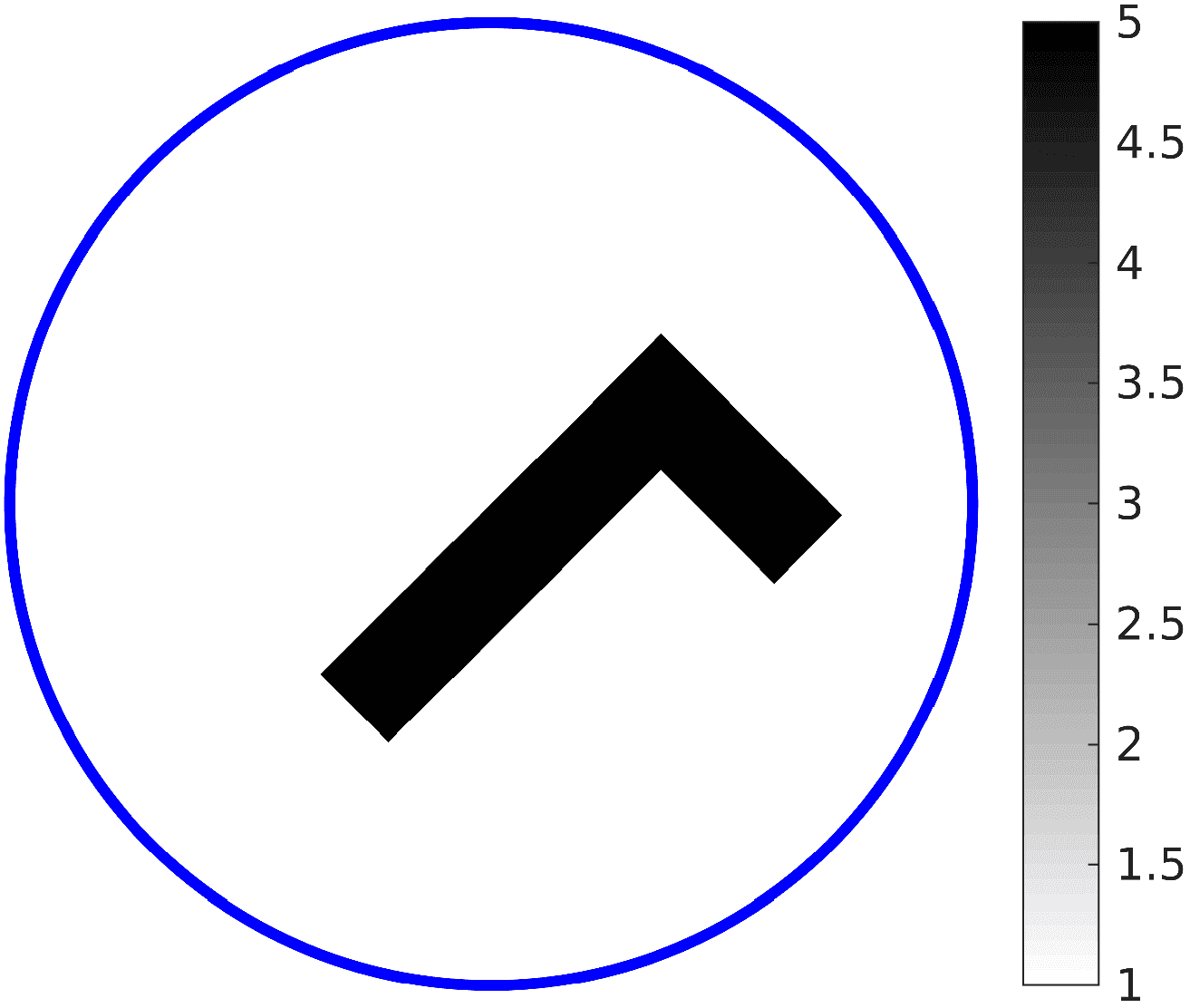}&
\includegraphics[width=0.32\textwidth]{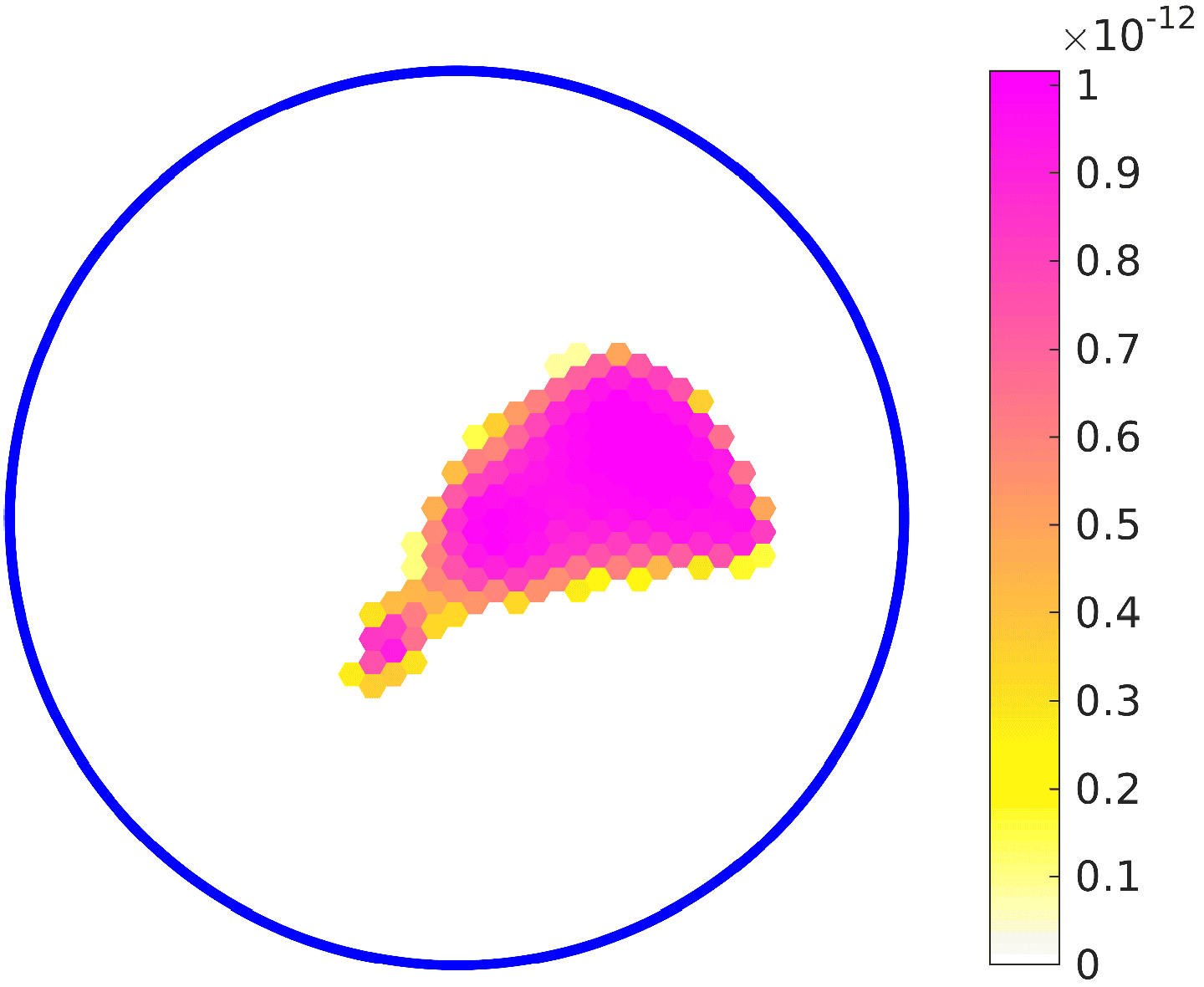}&
\includegraphics[width=0.31\textwidth]{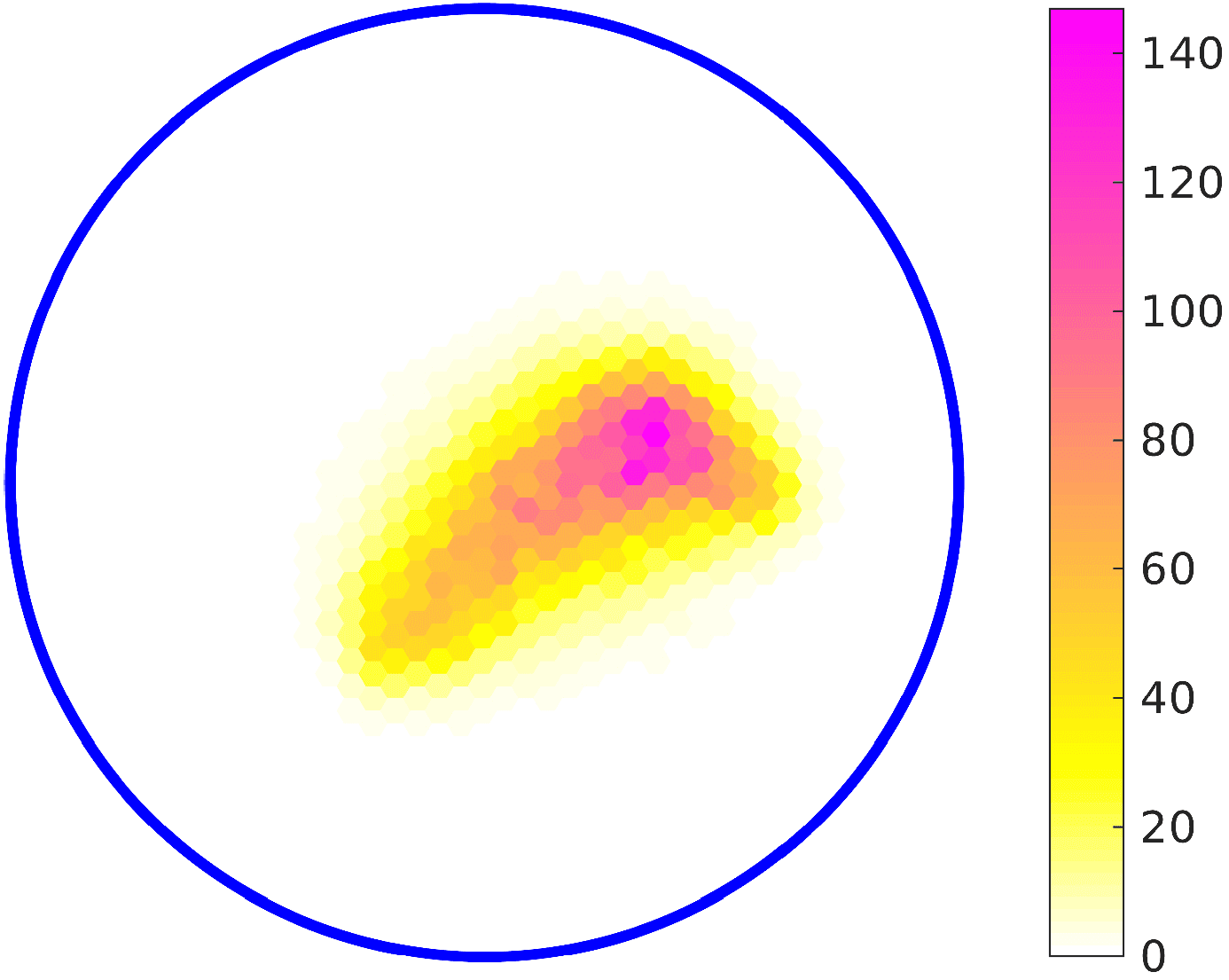}\\
\end{tabular}
\caption{\label{fig:cm-Lshape} Two-dimensional CM reconstructions from synthetic exact datum using $64$ linearly independent boundary current densities. Reconstructions ``{\bf Noiseless 1}'' and ``{\bf 2}'' are calculated using Algorithms 1 and 2, respectively. For more information on the FE mesh, see the beginning of section~\ref{sec:numerical}.}
\begin{center}
\def\arraystretch{1.3}
\begin{tabular}{ccc}
\hline
Parameter & {\bf Noiseless 1} (CM) & {\bf Noiseless 2} (CM)\\\hline
${\rm diam}(B)$ & $0.053$ & $0.053$ \\
$\beta$ & $0.8$ & $0.1 + 0.5\N$ \\
$\mu$ & $1.00003$ & $1.01$ \\\hline
\end{tabular}
\end{center}
{\bf Tbl. 1}\, \  Parameter values used in the computations; ${\rm diam}(B)$ is the diameter of the hexagons in the hexagonal reconstruction mesh \eqref{eq:chiB}, $\beta$ is the probing scalar(s) in the semidefiniteness test, and $\mu$ is the regularization parameter \eqref{eq:reg-param-num}.
\end{figure} 
\begin{figure}[h]
\begin{tabular}{m{1.2cm}m{2.14cm}m{2.14cm}m{2.14cm}m{2.14cm}}
&
\hspace{0.6cm}$k = 8$&
\hspace{0.53cm}$k = 16$&
\hspace{0.53cm}$k = 32$&
\hspace{0.53cm}$k = 64$\\
{\scriptsize {\bf Noiseless~1}}&
\includegraphics[width=0.20\textwidth]{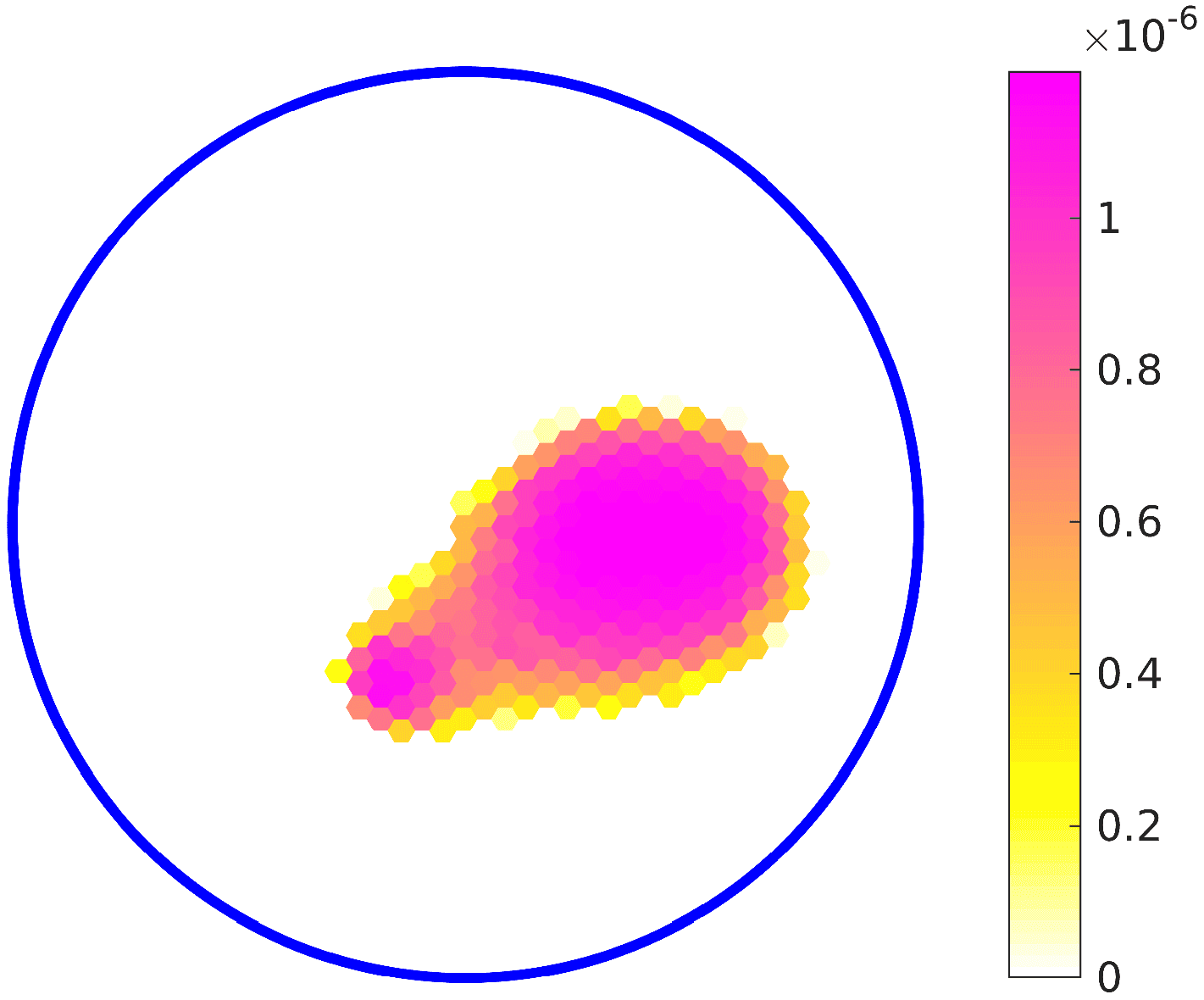}&
\includegraphics[width=0.20\textwidth]{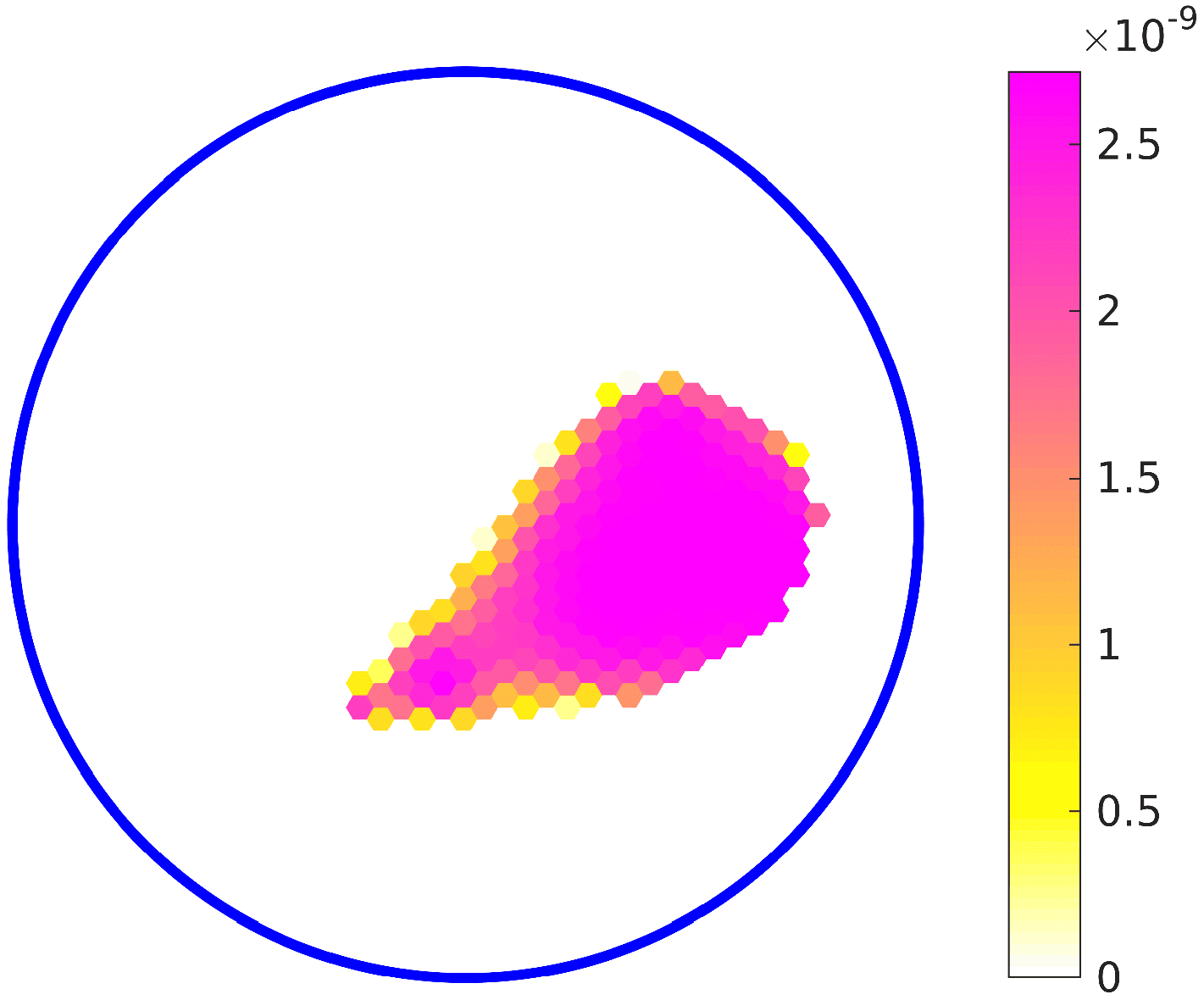}&
\includegraphics[width=0.20\textwidth]{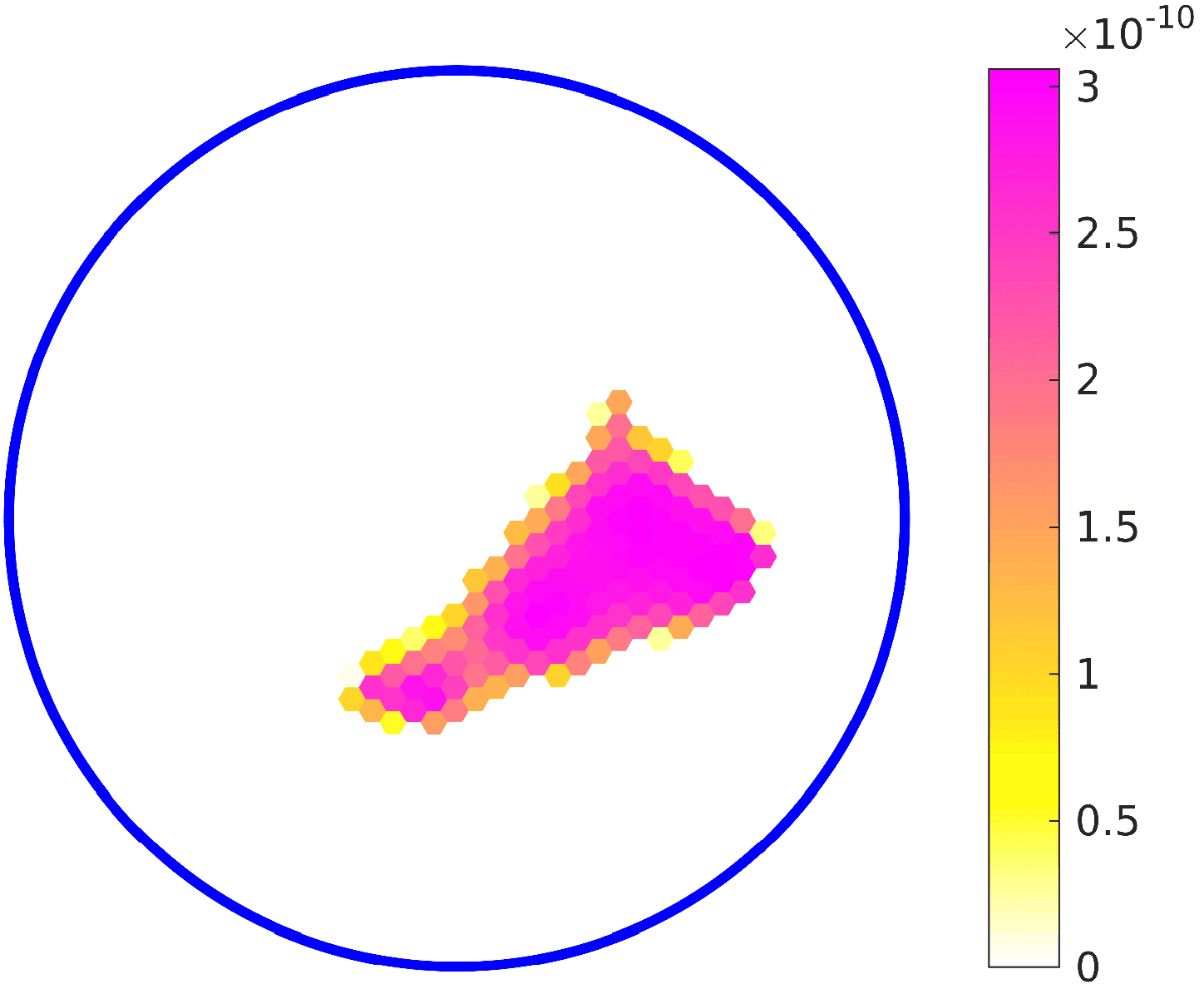}&
\includegraphics[width=0.20\textwidth]{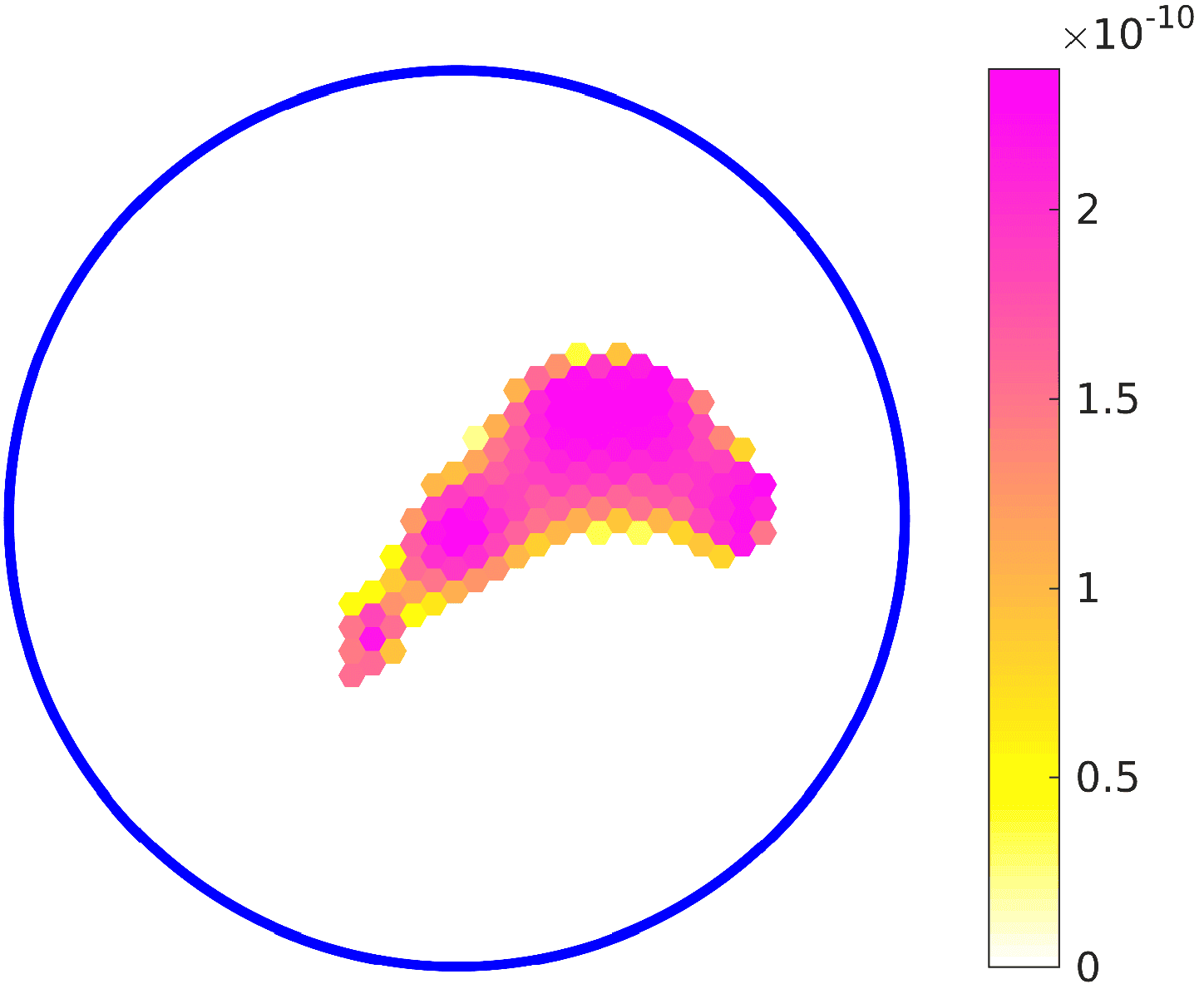}\\
{\scriptsize {\bf Noiseless~2}}&
\includegraphics[width=0.19\textwidth]{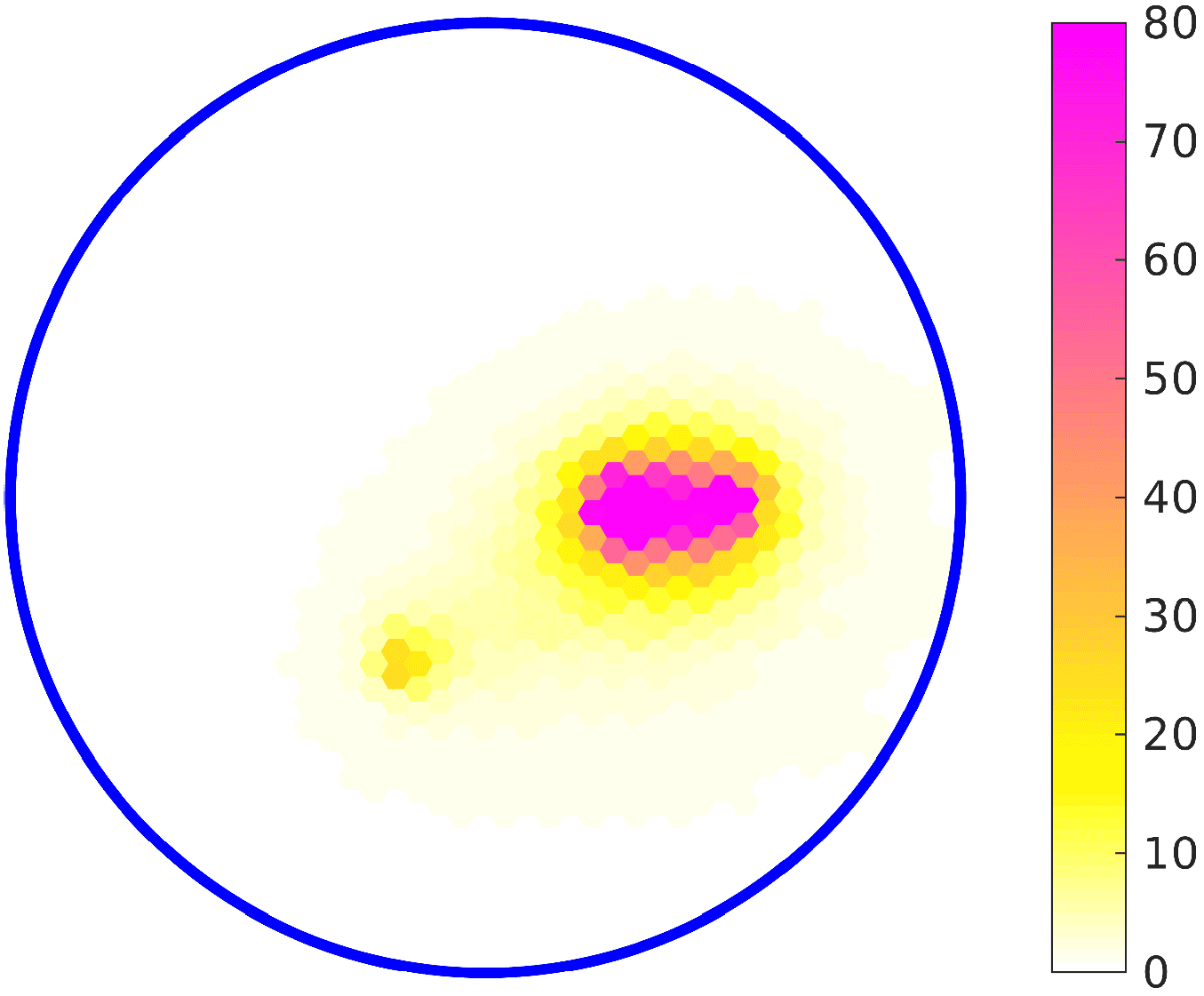}&
\includegraphics[width=0.19\textwidth]{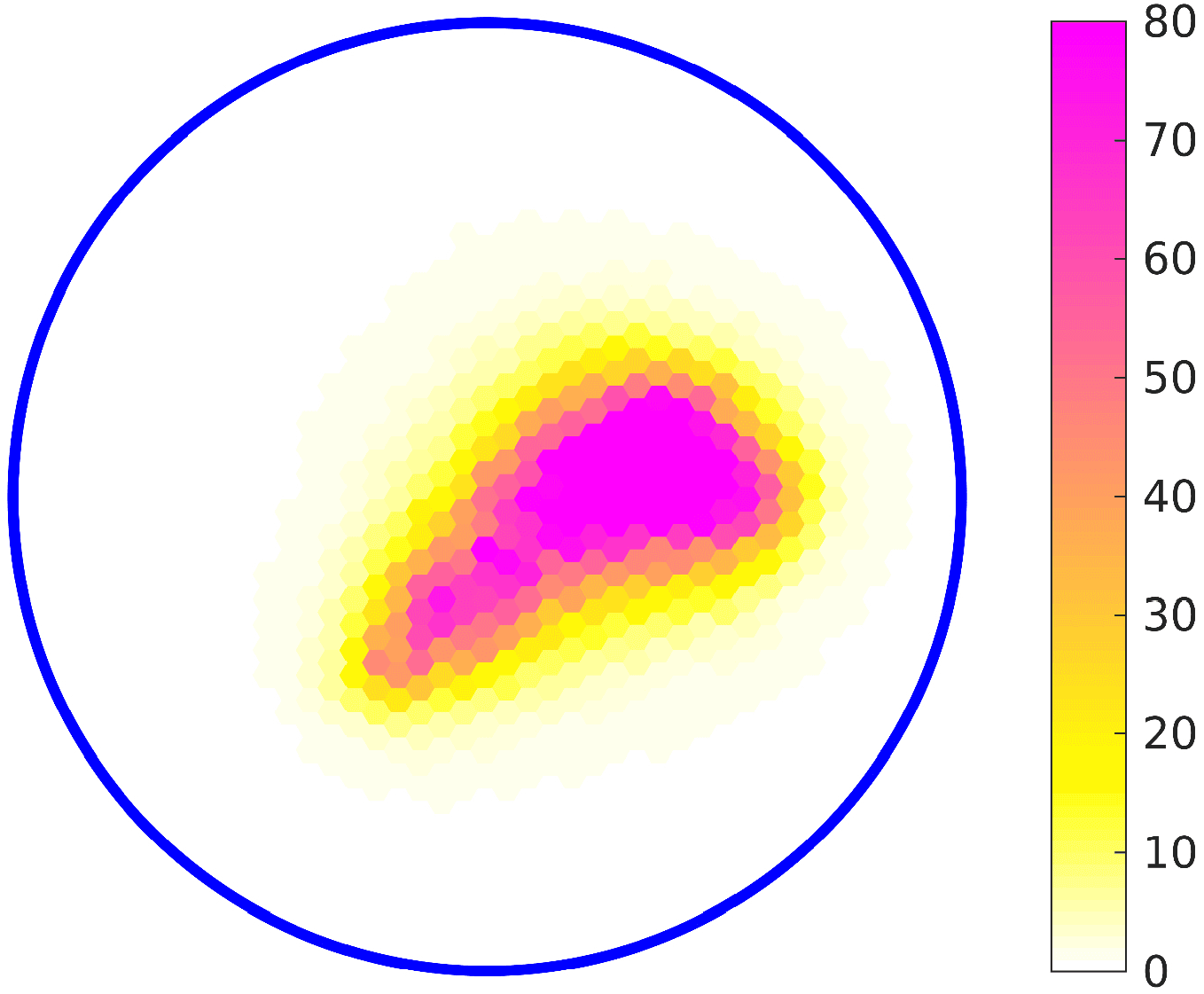}&
\includegraphics[width=0.19\textwidth]{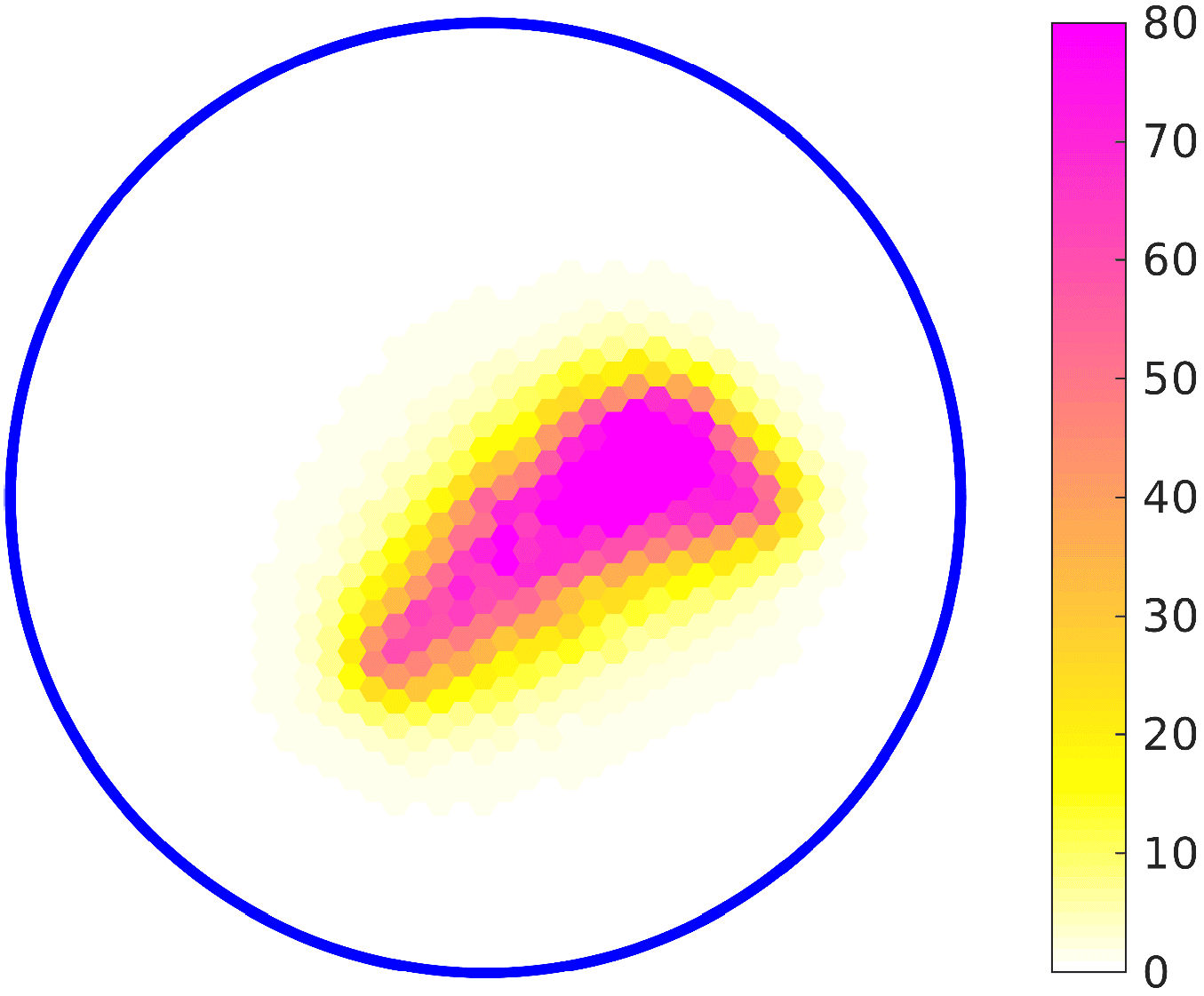}&
\includegraphics[width=0.19\textwidth]{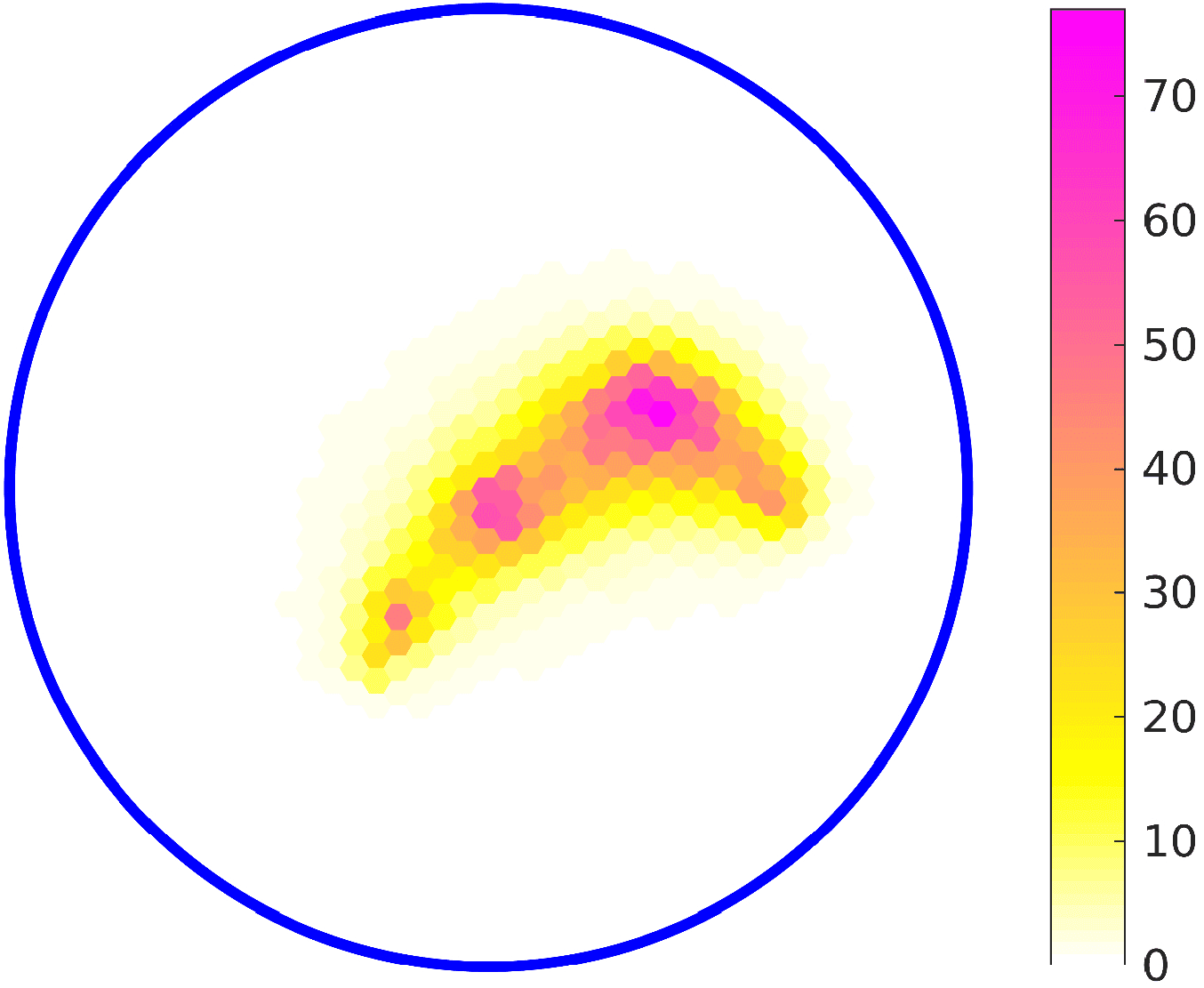}\\
\end{tabular}
\caption{\label{fig:cem-Lshape} Two-dimensional CEM reconstructions from synthetic noiseless data using $k = 8, 16, 32$ and $64$ equispaced electrodes of equal width. Reconstructions in the table rows ``{\bf Noiseless 1}'' and ``{\bf 2}'' are calculated using Algorithms 1 and 2, respectively. For more information on the FE mesh, see the beginning of section~\ref{sec:numerical}.}
\vspace{0.4cm}
\hspace{1.1cm}
\def\arraystretch{1.3}
\begin{tabular}{l}
\\[.07cm]
$
k = 8\hspace{4pt}\left\{
\begin{array}{c}
\\ 
\\
\\
\end{array}
\right.
$
\\
$
k = 16\left\{
\begin{array}{c}
\\ 
\\
\\
\end{array}
\right.
$
\\
$
k = 32\left\{
\begin{array}{c}
\\ 
\\
\\
\end{array}
\right.
$
\\
$
k = 64\left\{
\begin{array}{c}
\\ 
\\
\\
\end{array}
\right.
$
\end{tabular}
\hspace{-0.4cm}
\begin{tabular}{ccc}
\hline
 Parameter & {\bf Noiseless 1} & {\bf Noiseless 2} \\\hline
 ${\rm diam}(B)$ &  $0.053$ & $0.053$ \\
 $\beta$ & $0.8$ & $0.1 + 0.5\N$ \\
 $\mu$ & $1.4$ & $1.4$ \\\hdashline[2pt/2pt]
  ${\rm diam}(B)$ & $0.053$ & $0.053$ \\
 $\beta$ & $0.8$ & $0.1 + 0.5\N$ \\
 $\mu$ & $1.001$ & $1.01$ \\\hdashline[2pt/2pt]
  ${\rm diam}(B)$ & $0.053$ & $0.053$ \\
 $\beta$ & $0.8$ & $0.1 + 0.5\N$ \\
 $\mu$ & $1.00001$ & $1.01$ \\\hdashline[2pt/2pt]
  ${\rm diam}(B)$ & $0.053$ & $0.053$ \\
 $\beta$ & $0.8$ & $0.1 + 0.5\N$ \\
 $\mu$ & $1.00001$ & $1.0001$ \\\hline
\end{tabular}\\[0.2cm]

{\bf Tbl. 2}\, \  Parameter values used in the computations; ${\rm diam}(B)$ is the diameter of the hexagons in the hexagonal reconstruction mesh \eqref{eq:chiB}, $\beta$ is the probing scalar(s) in the semidefiniteness test, and $\mu$ is the regularization parameter \eqref{eq:reg-param-num}.
\end{figure}
Figure~\ref{fig:cm-Lshape} shows the reconstructions produced by Algorithms 1--2 using the discretized CM model. No random noise is added to the data which are simulated using a target conductivity with an L-shaped conductive inclusion. The model uses a fairly high number $p=64$ linearly independent boundary current density inputs of form 
$$
f_m(\theta) = \frac{1}{\sqrt{\pi}} 
\left\{
\begin{array}{lrr}
\cos(m\theta), & \ m & = 1,2,\ldots, p/2, \\
\sin((m-p/2)\theta), & \ m - p/2 & = 1,2,\ldots, p/2,
\end{array}
\right.,
\quad \theta\in [0,2\pi).
$$ 

Figure~\ref{fig:cem-Lshape} displays noiseless reconstructions using the CEM with increasing numbers of electrodes. In this example, current inputs of form
\begin{equation*}
I^{(m)}_j = 
\left\{
\begin{array}{lrrl}
\cos(m2\pi j/k), & \ m &=& 1,2,\ldots,k/2, \\
\sin((m-k/2)2\pi j/k), & \ m - k/2 &=& 1, 2, \ldots, k/2-1,
\end{array}
\right.
\end{equation*} 
are used. Intuitively, the results should not depend too much on the choice of the current basis if the noise level is low. We observe that increasing the number of electrodes seems to improve the reconstruction to the extent that the non-convexity of the target is revealed.

\end{example}

\begin{example}\label{ex:2}

The second example considers three different test objects in which the conductivity consists of a constant background with various convex-shaped inclusions. Both noisy and noiseless measurements are simulated using $k = 16$ electrodes. The reconstructions computed with both Algorithms~1--2 are displayed in Figure~\ref{fig:incs}. In the rightmost target, the inclusions are less conductive than the background, i.e., $\gamma = \gamma_0 - \kappa\chi_D$ where $\gamma_0 > \kappa > 0$. In this case, the suitable semidefiniteness test is 
\[
\min \sigma(R^\delta + \beta R'(\gamma_0)\chi_B - R(\gamma_0) + \alpha\id) \geq 0.
\]
Using the right-hand side of \eqref{eq:mono}, one can deduce that a sufficient condition for the probe constant is $\beta \leq {\rm ess}\inf (-\kappa)$. In the noiseless case we observe that \eqref{eq:reg-param-num} yields a negative $\alpha$. However, we emphasize that negative regularization parameters are allowed by both \eqref{eq:reg-param} and its adaptation to the case with resistive inclusions. The results indicate that interesting information about the inclusion locations can be retrieved in a relatively realistic simulated setting. Moreover, we notice that Algorithm~2 is more flexible in the sense that it enables compensating bad choices of $\alpha$ by increasing $\beta$. 
 
\begin{figure}[tbh]
\begin{tabular}{m{1.3cm}m{3.0cm}m{3.0cm}m{3.0cm}}
{\scriptsize{\bf Target}}&
\includegraphics[width=0.25\textwidth]{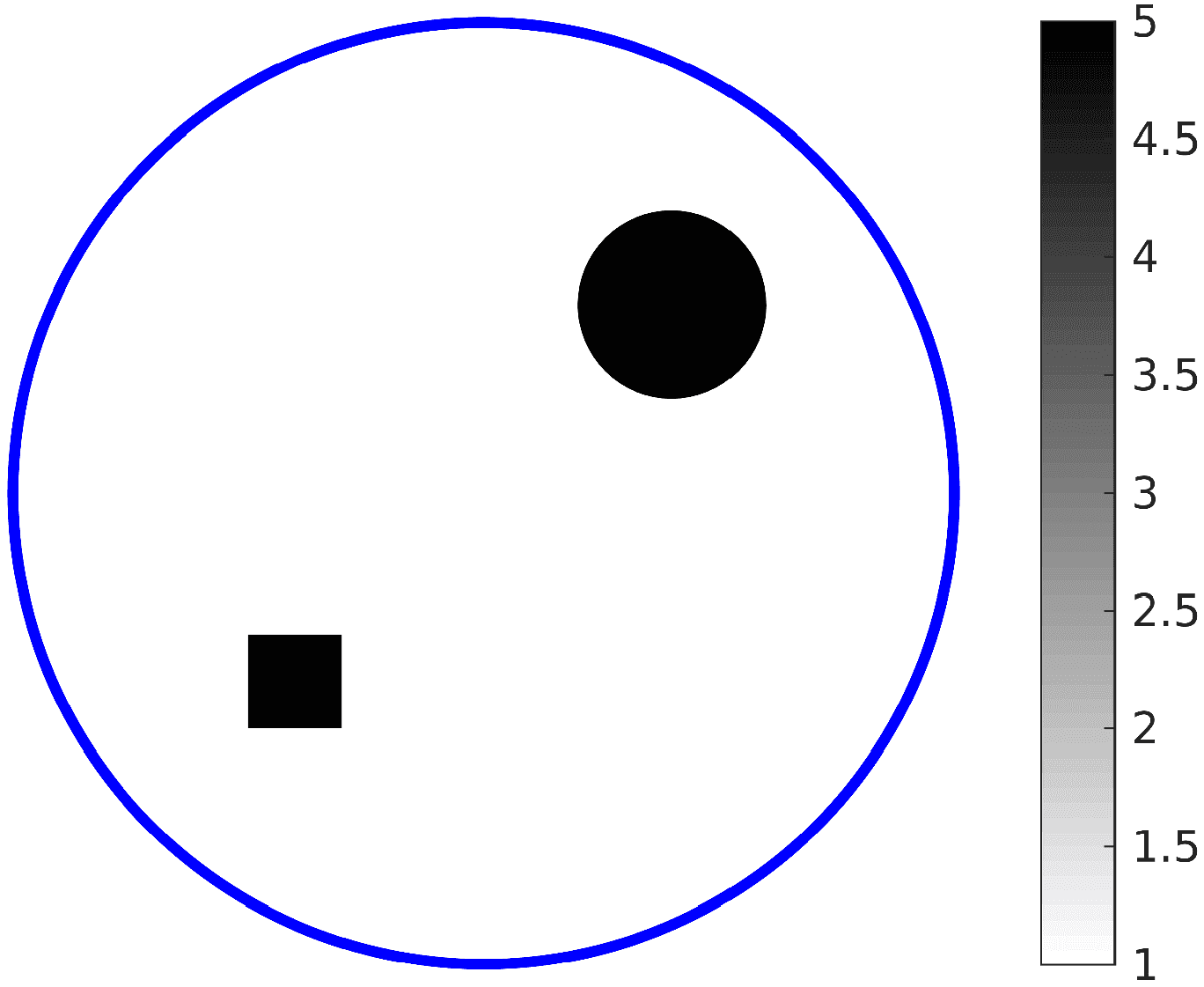}&
\includegraphics[width=0.25\textwidth]{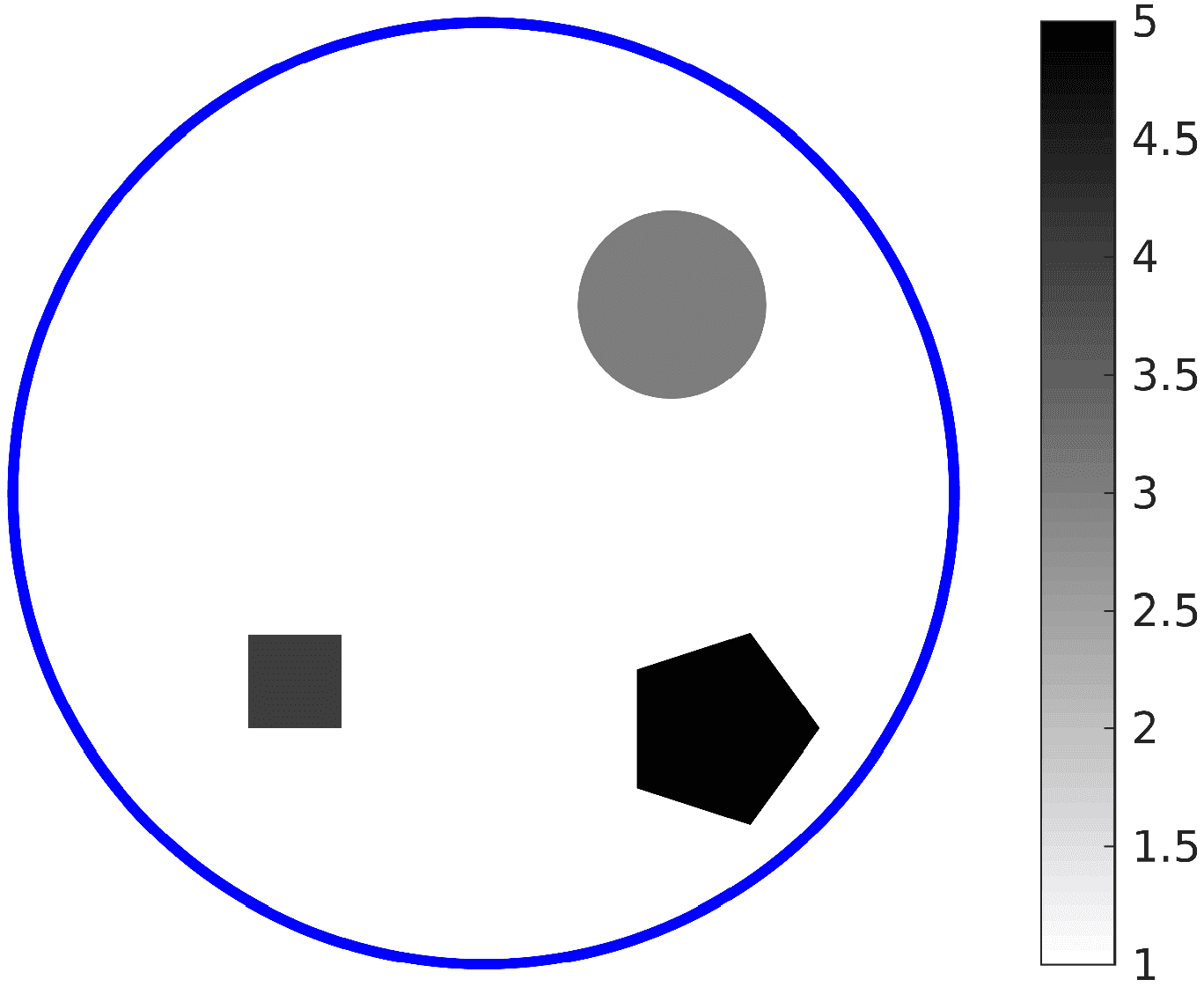}&
\includegraphics[width=0.25\textwidth]{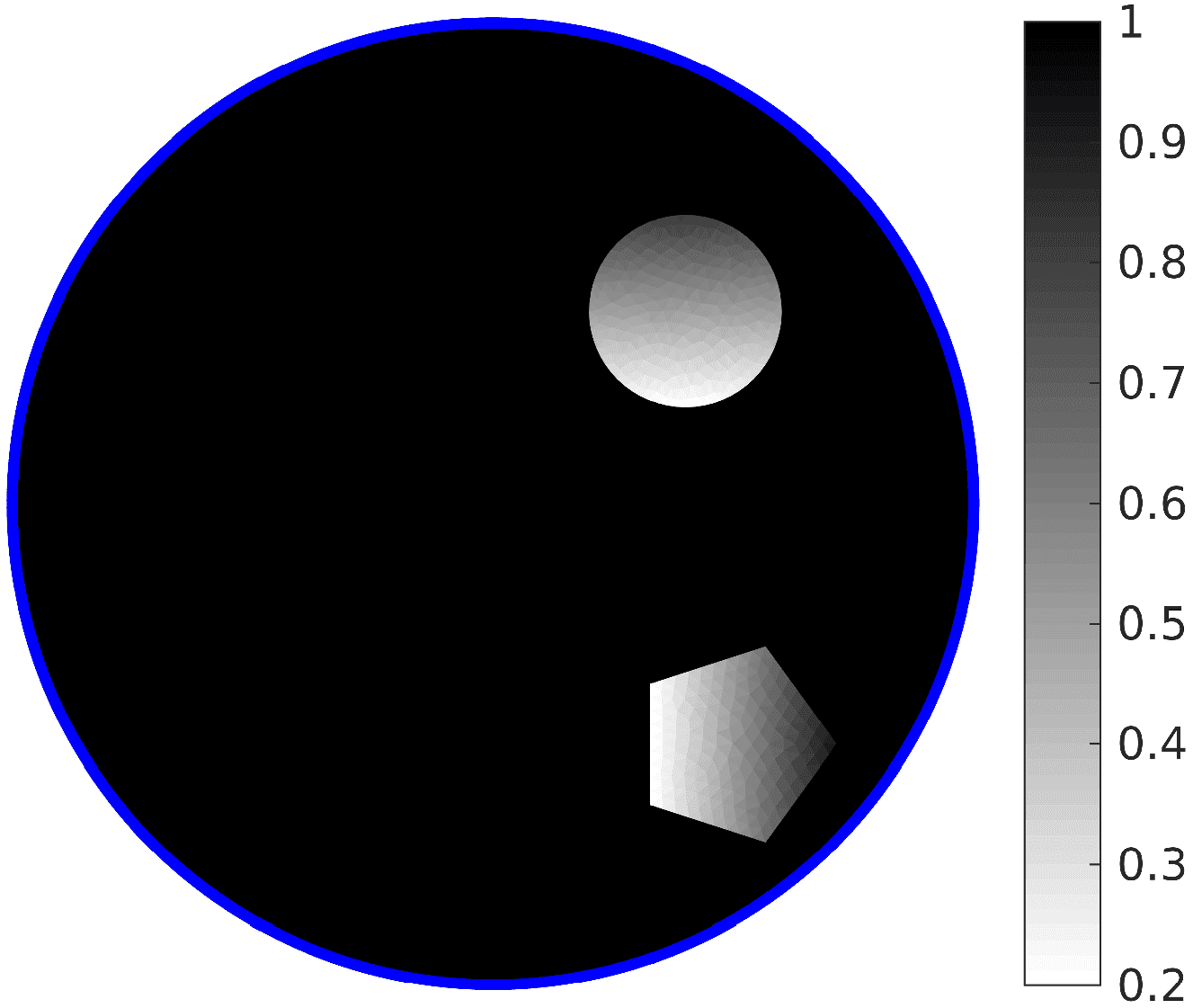}\\ 
{\scriptsize{\bf Noiseless~1} (CEM)} & 
\includegraphics[width=0.26\textwidth]{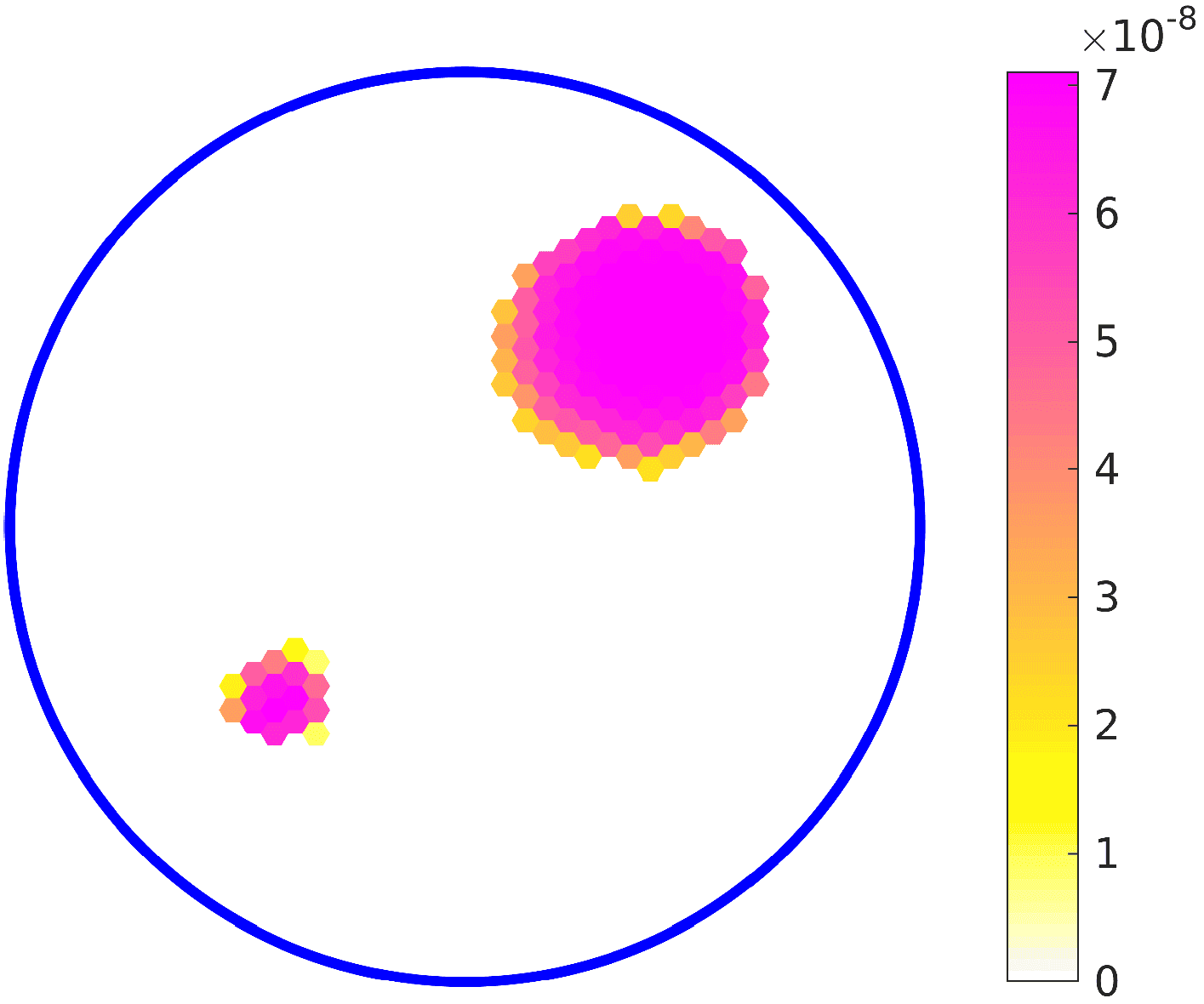}&
\includegraphics[width=0.26\textwidth]{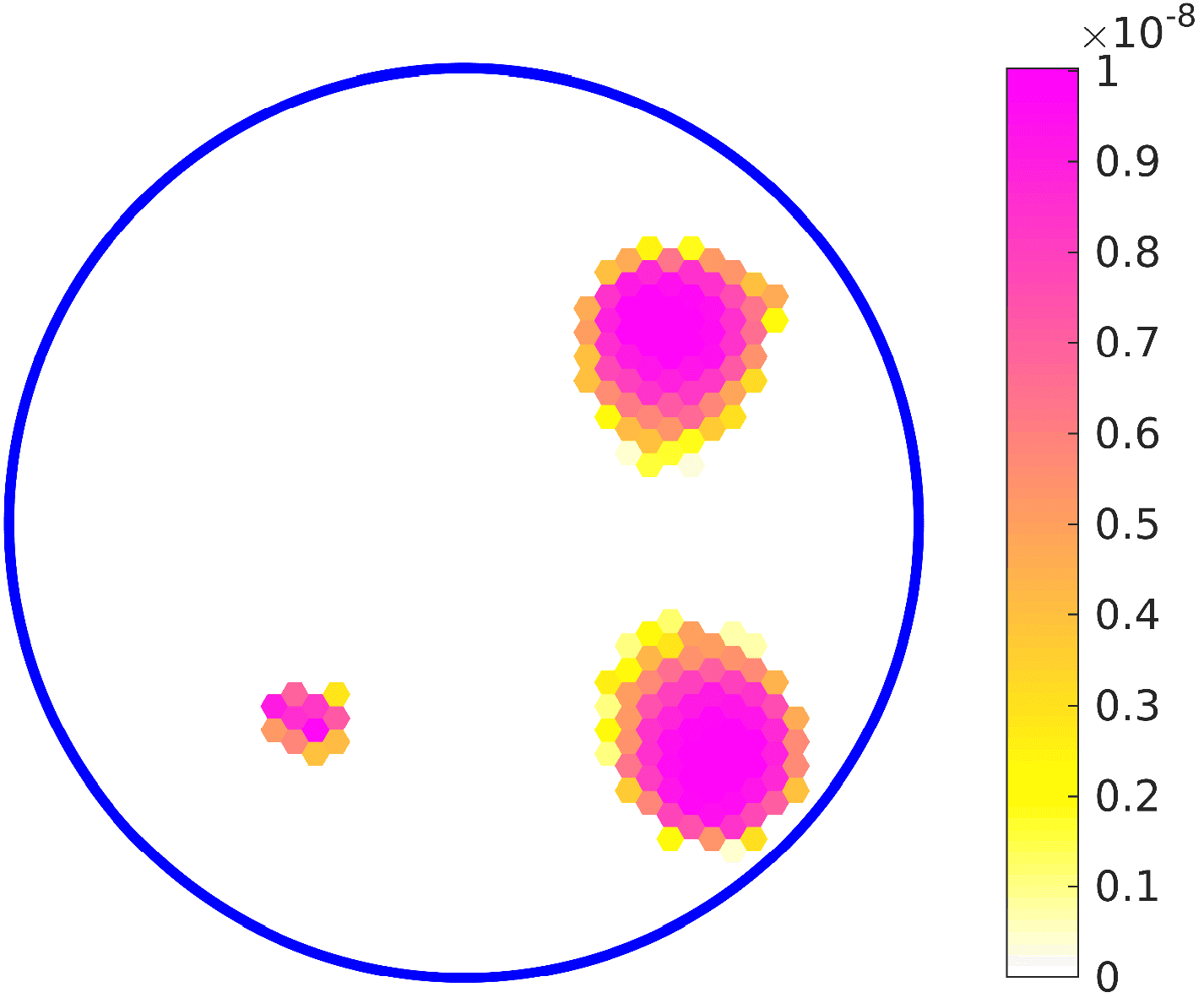}&
\includegraphics[width=0.26\textwidth]{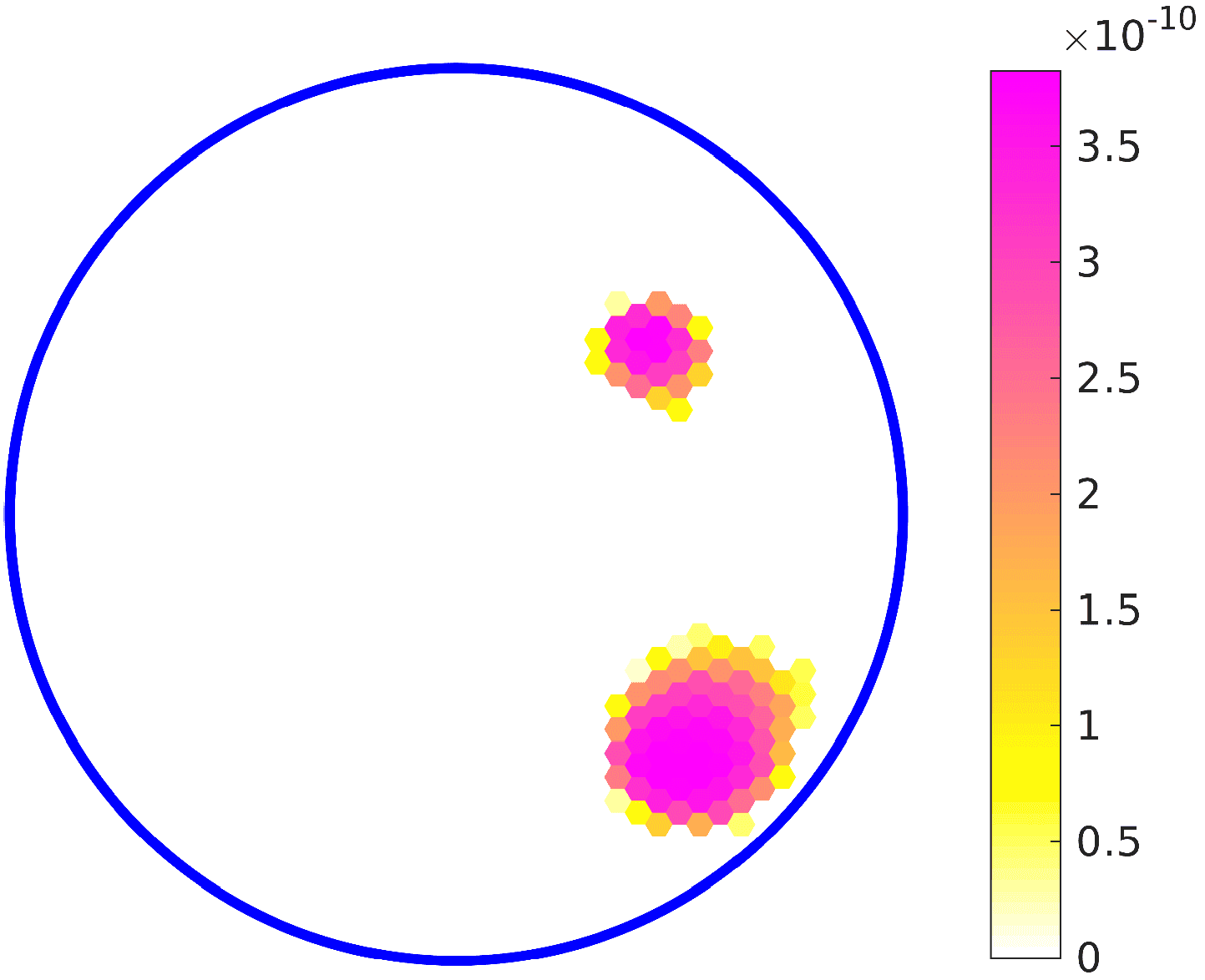}\\
{\scriptsize{\bf Noiseless~2} (CEM)} & 
\includegraphics[width=0.25\textwidth]{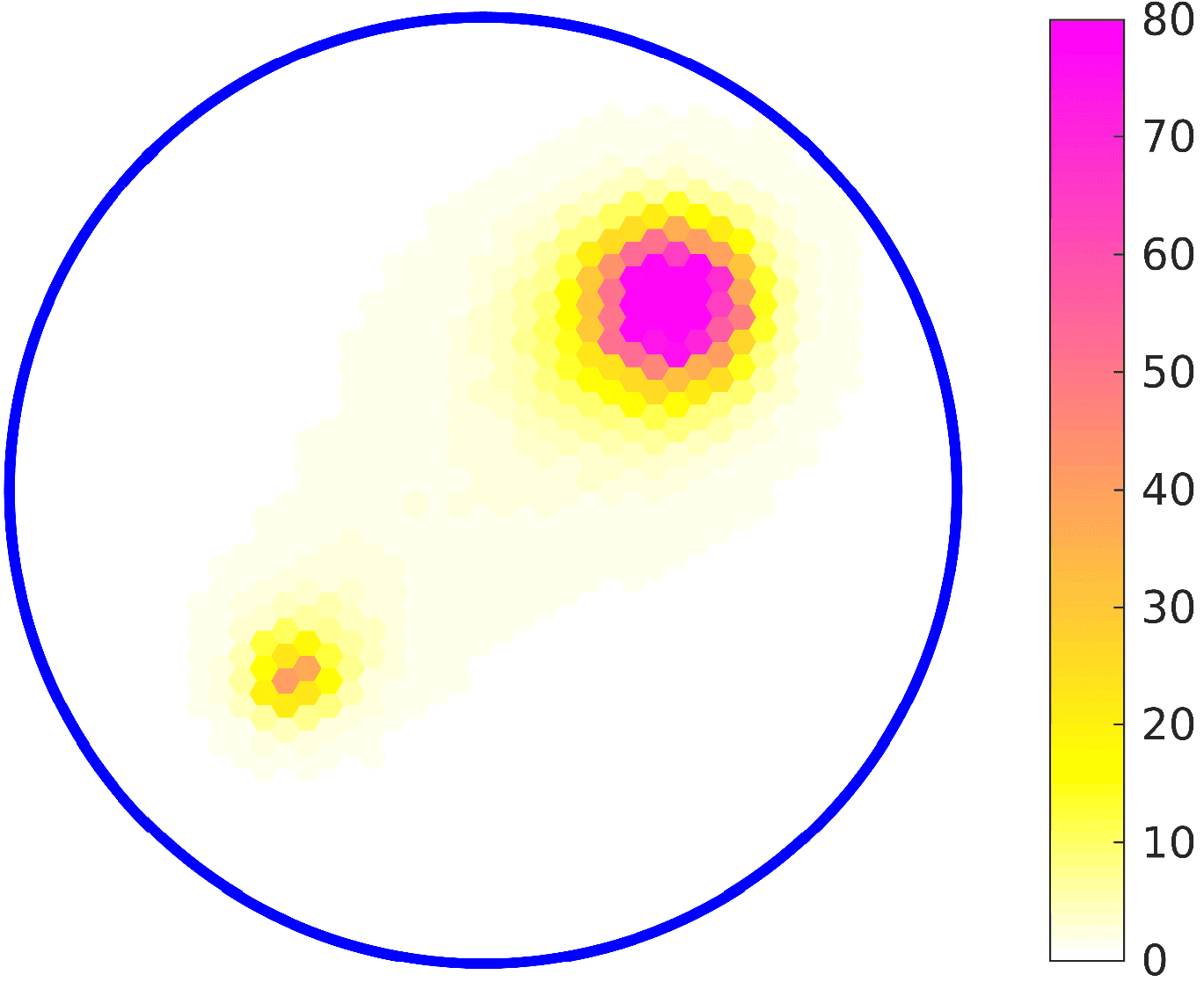}&
\includegraphics[width=0.25\textwidth]{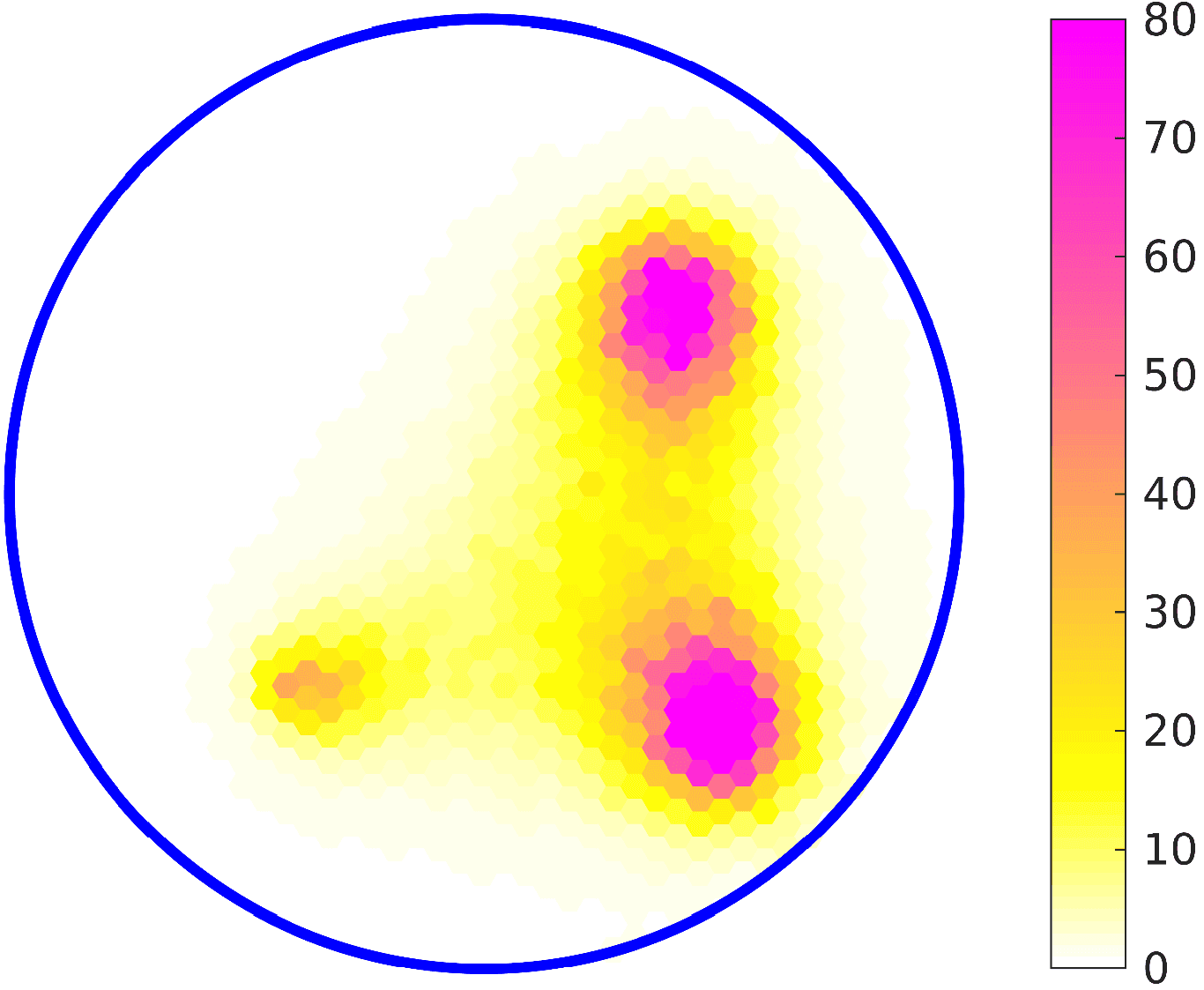}&
\includegraphics[width=0.25\textwidth]{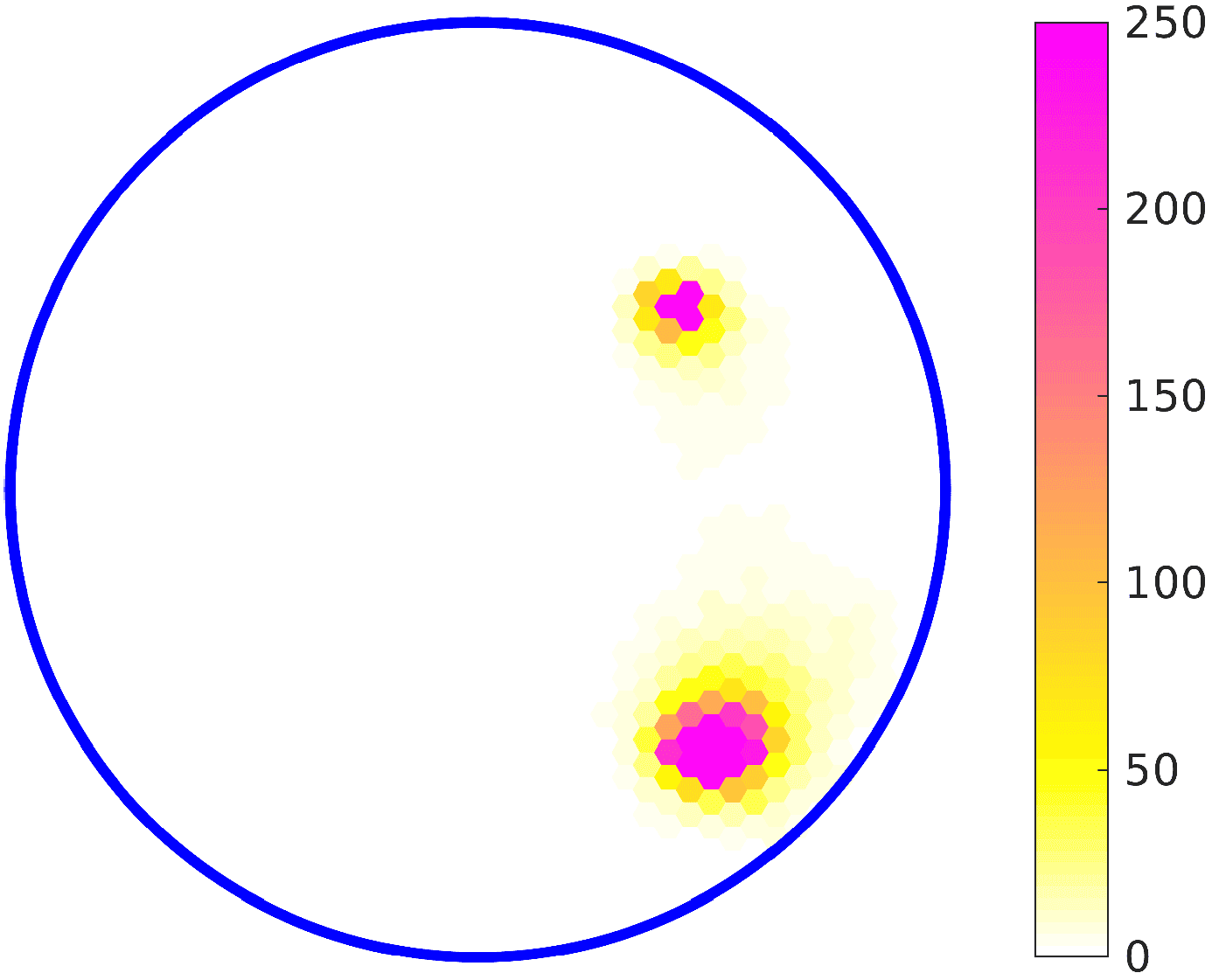}\\
{\scriptsize{\bf Noisy{\color{white}.}1} (CEM)} &
\includegraphics[width=0.26\textwidth]{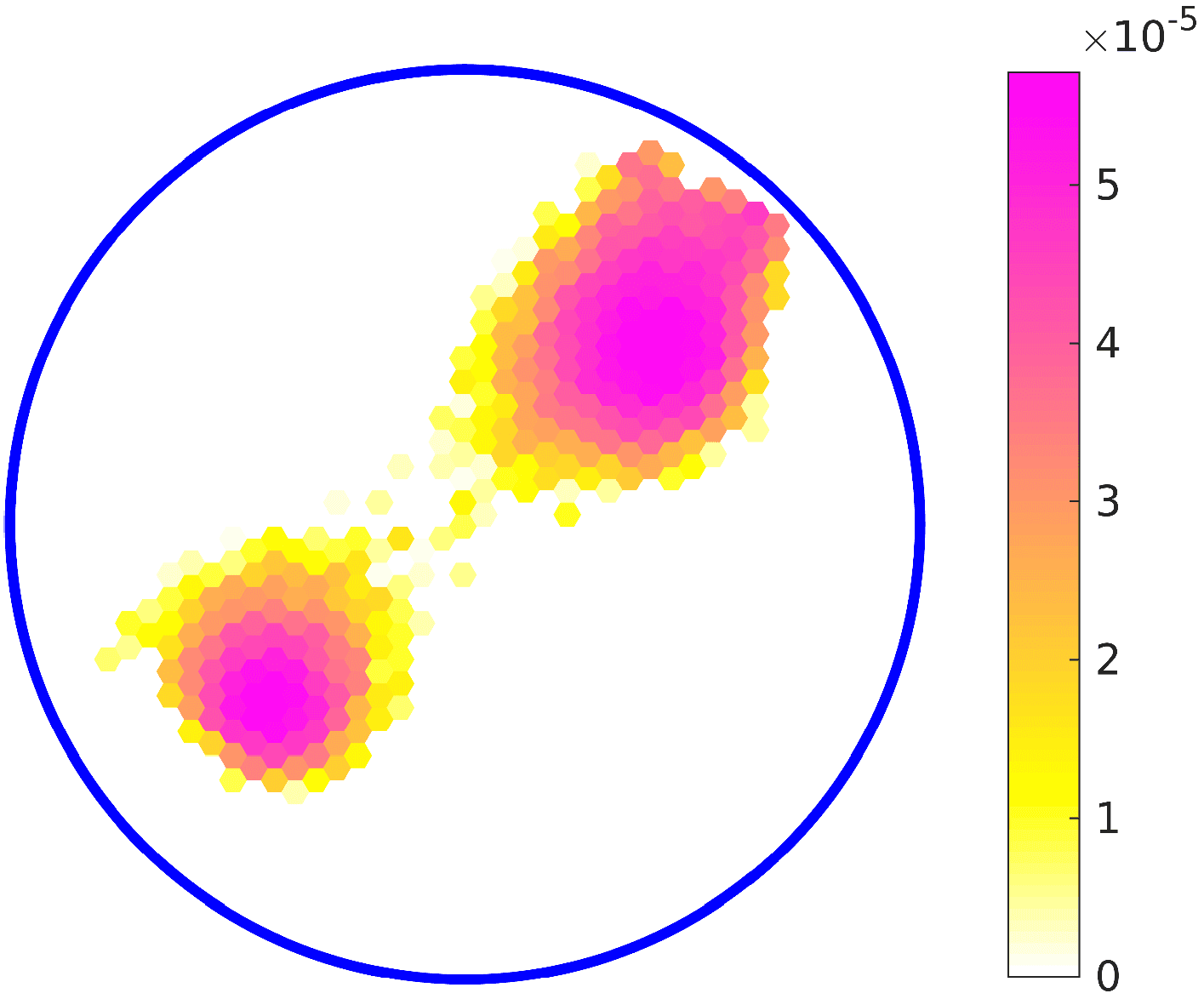}&
\includegraphics[width=0.26\textwidth]{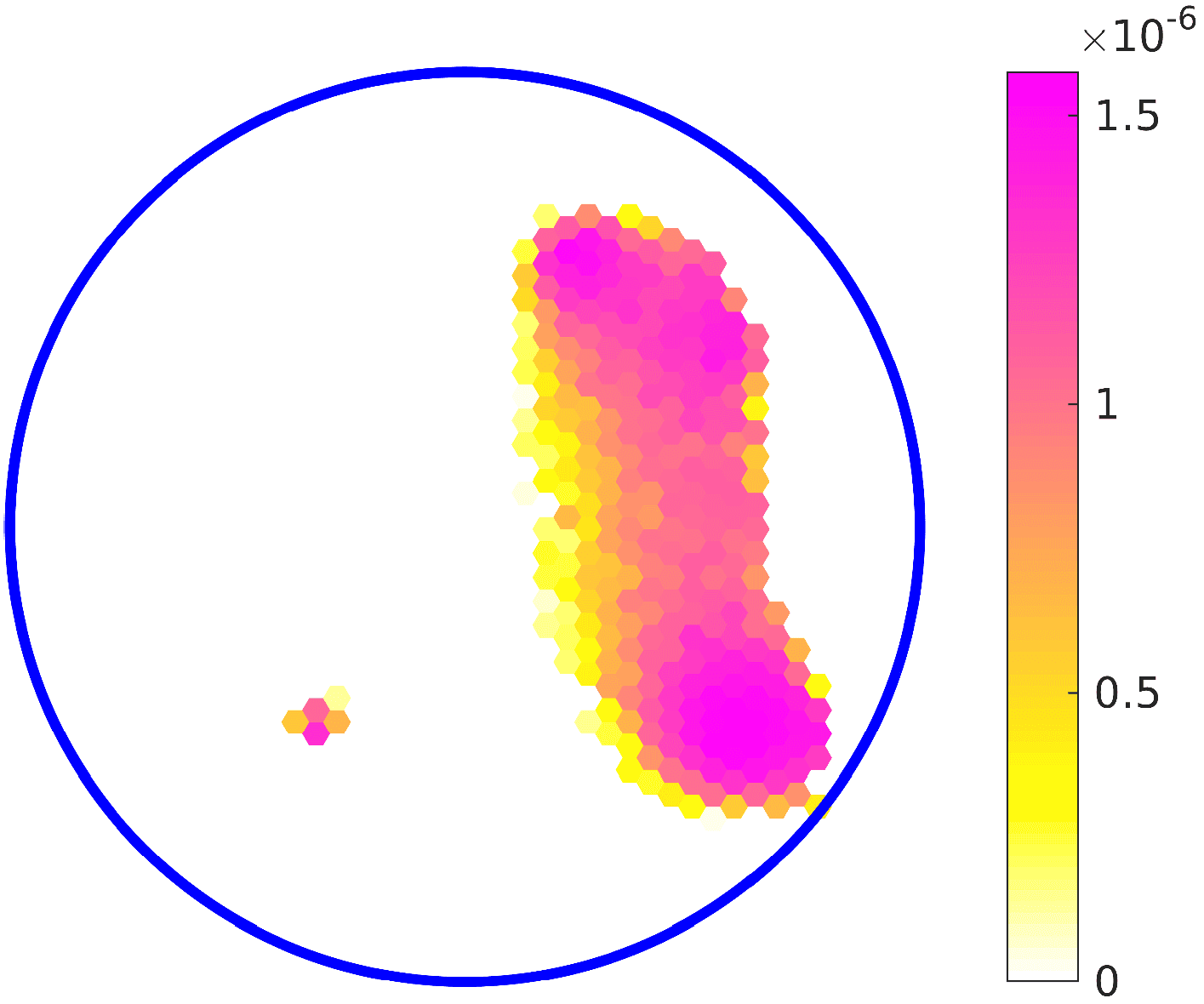}&
\includegraphics[width=0.26\textwidth]{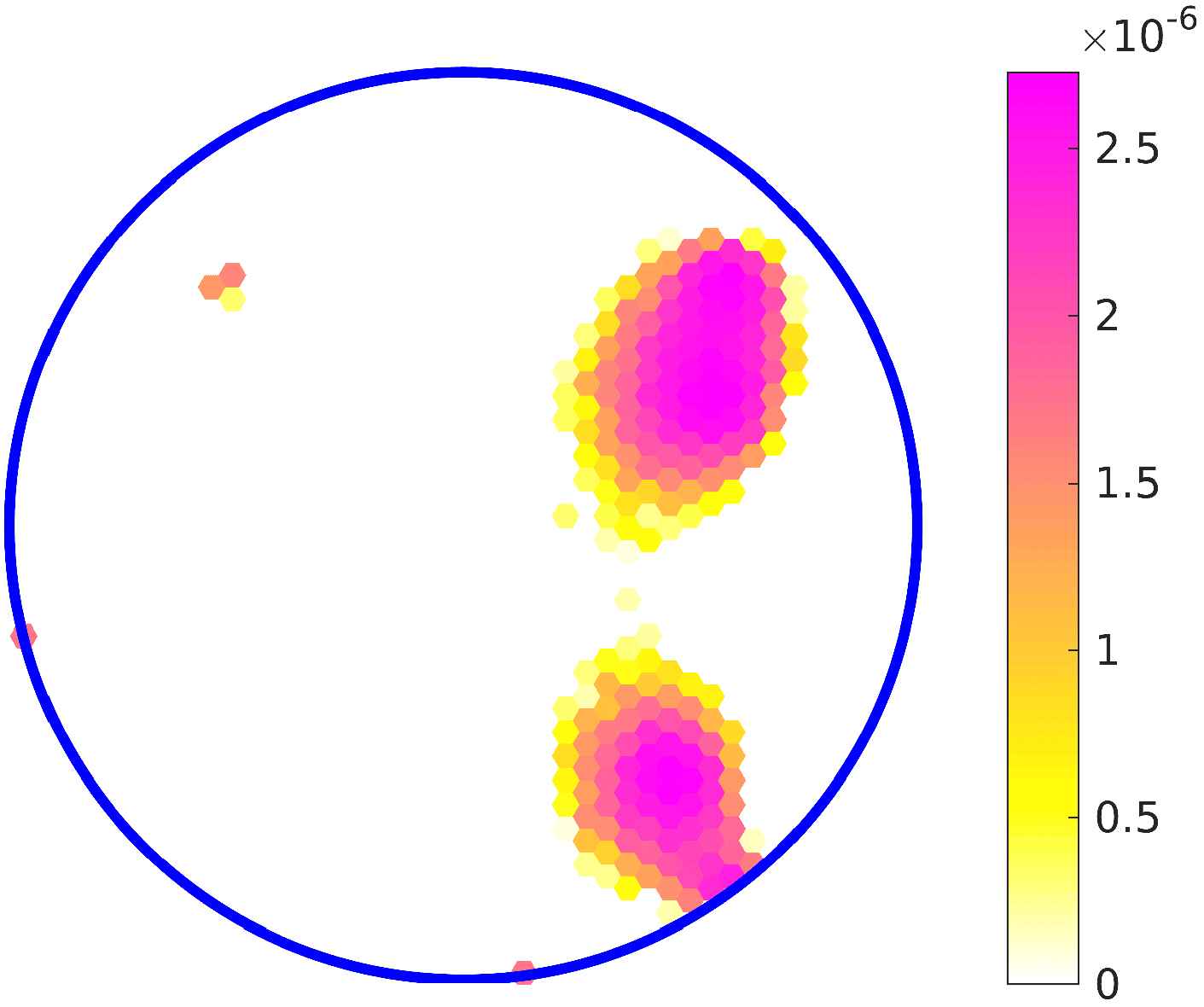}\\
{\scriptsize{\bf Noisy{\color{white}.}2} (CEM)} &
\includegraphics[width=0.25\textwidth]{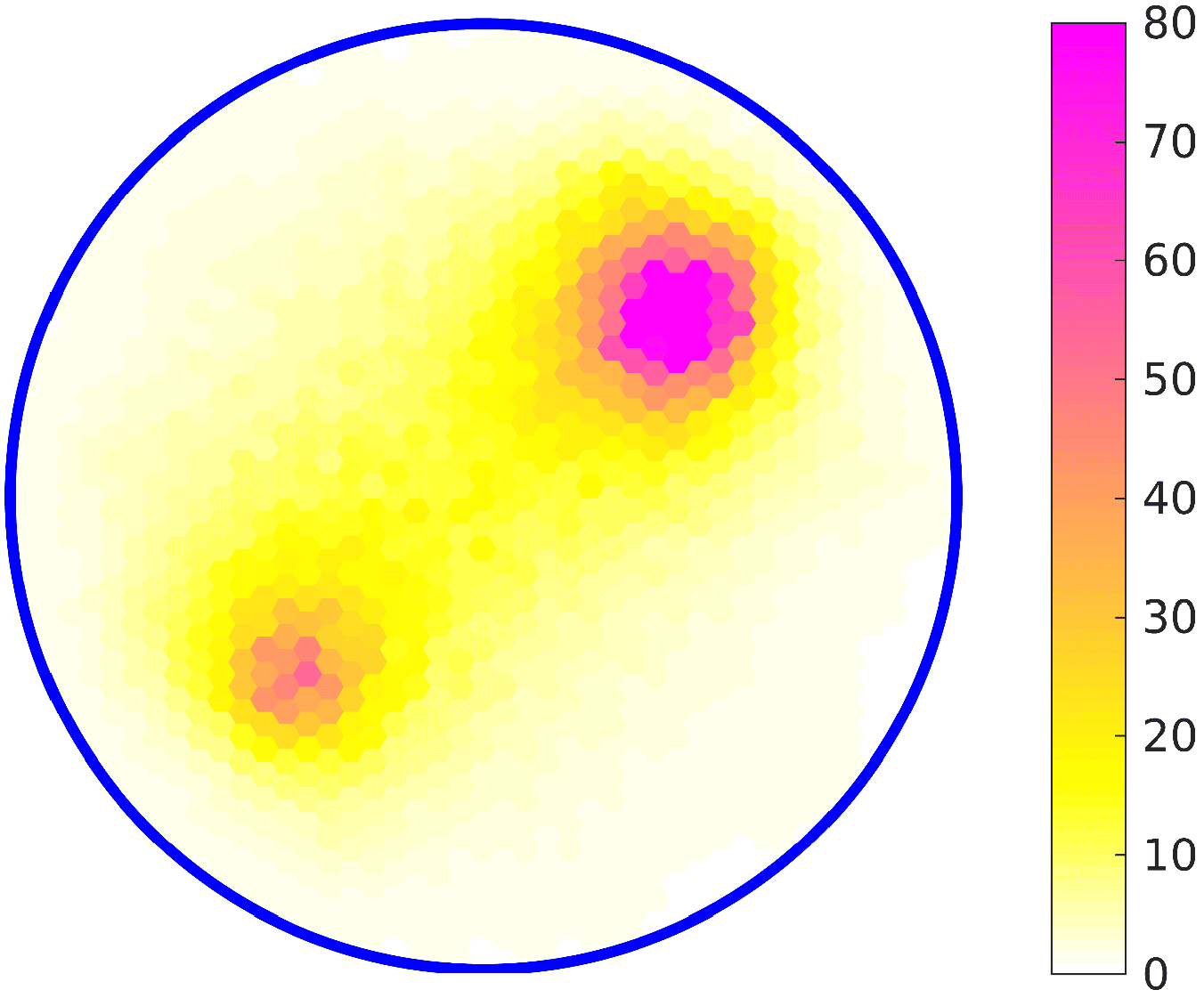}&
\includegraphics[width=0.25\textwidth]{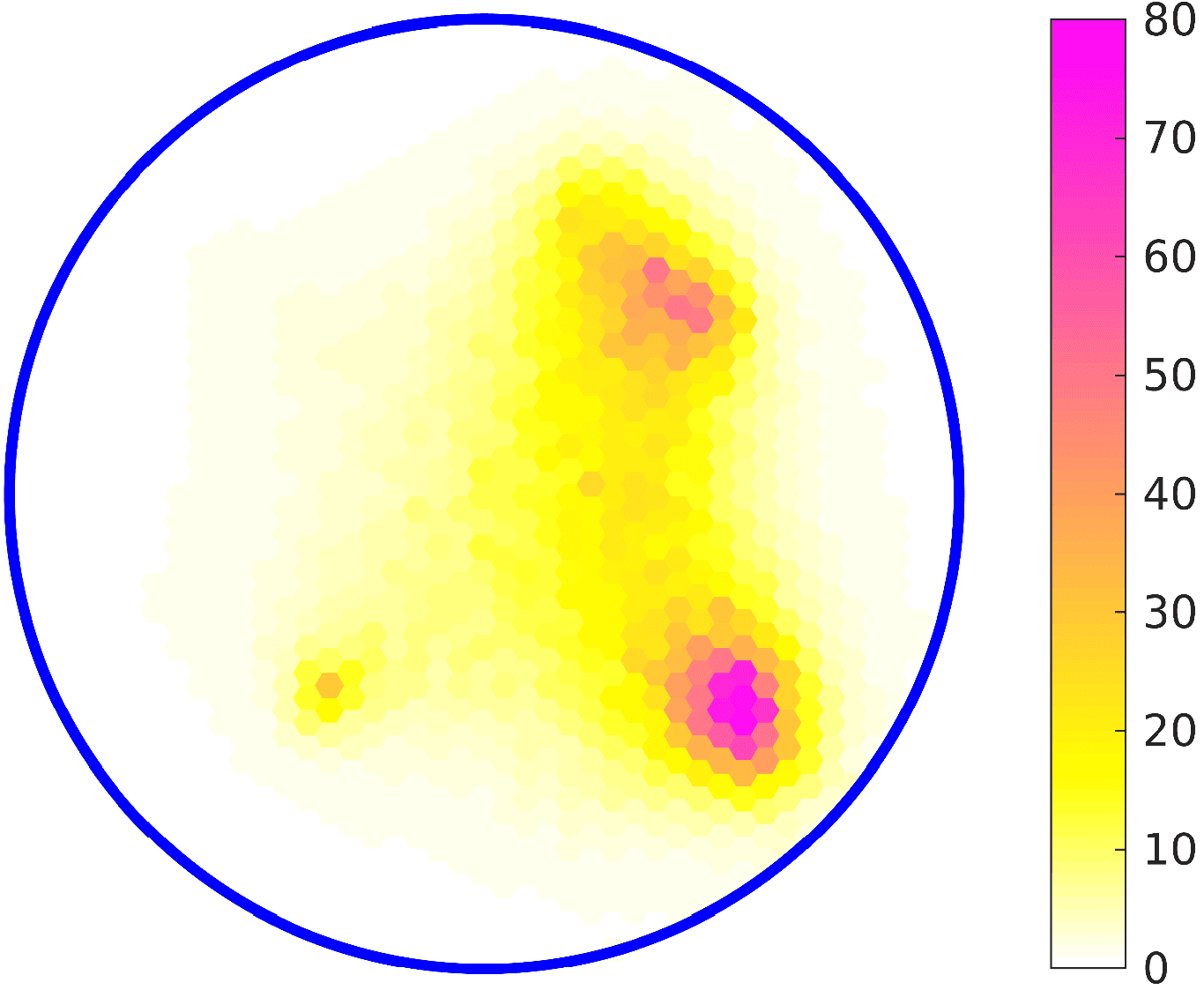}&
\includegraphics[width=0.25\textwidth]{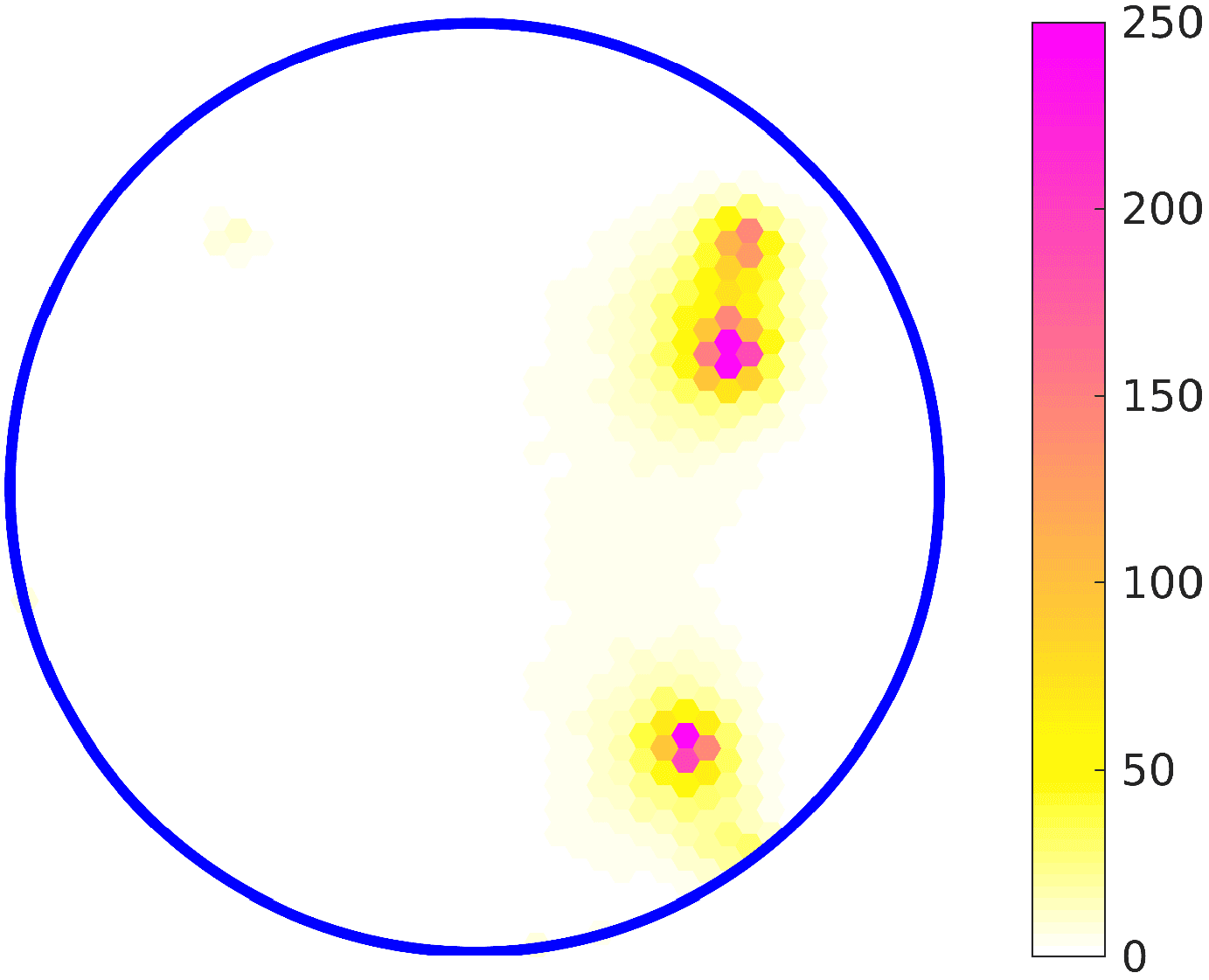}
\end{tabular}
\caption{\label{fig:incs}Two-dimensional reconstructions on a unit disk with $16$ equispaced and equidistant electrodes that cover 50\% of the boundary. Reconstructions in the table rows ``{\bf Noiseless/Noisy 1}'' and ``{\bf 2}'' are calculated using Algorithms 1 and 2, respectively. In each column, the noisy reconstructions are computed from a single dataset that contains around $0.5\%$ pseudorandom noise; see \eqref{eq:noisemodel} for details on the noise simulation. For more information on the FE mesh, see the beginning of section~\ref{sec:numerical}.}
\vspace{0.4cm}
\hspace{-0.1cm}
\def\arraystretch{1.3}
\begin{tabular}{l}
\\[.07cm]
\hspace{0.28cm}
$
{\rm Left}
\left\{
\begin{array}{c}
\\ 
\\
\\
\end{array}
\right.
$
\\
$
{\rm Middle}
\left\{
\begin{array}{c}
\\ 
\\
\\
\end{array}
\right.
$
\\
\hspace{0.08cm}
$
{\rm Right}
\left\{
\begin{array}{c}
\\ 
\\
\\
\end{array}
\right.
$
\end{tabular}
\hspace{-0.4cm}
\begin{tabular}{ccccc}
\hline
 Parameter & {\bf Noiseless 1} & {\bf Noiseless 2} & {\bf Noisy 1} & {\bf Noisy 2} \\\hline
 ${\rm diam}(B)$ &  $0.053$ & $0.053$ & $0.053$ & $0.053$ \\
 $\beta$ & $0.8$ & $0.1 + 0.5\N$ & $0.8$ & $0.1 + 0.5\N$ \\
 $\mu$ & $1.001$ & $1.01$ & $1.01$ & $1.01$ \\\hdashline[2pt/2pt]
  ${\rm diam}(B)$ & $0.053$ & $0.053$ & $0.053$ & $0.053$ \\
 $\beta$ & $0.66$ & $0.1 + 0.5\N$ & $0.66$ & $0.1 + 0.5\N$ \\
 $\mu$ & $1.0002$ & $1.01$ & $1.01$ & $1.01$ \\\hdashline[2pt/2pt]
  ${\rm diam}(B)$ & $0.053$ & $0.053$ & $0.053$ & $0.053$ \\
 $\beta$ & $-0.1$ & $-0.01 - 0.02\N$ & $-0.01$ & $-0.01 - 0.02\N$ \\
 $\mu$ & $0.99998$ & $0.99998$ & $1.001$ & $1.001$ \\\hline

\end{tabular}\\[0.2cm]

{\bf Tbl. 3}\, \  Parameter values used in the computations; ${\rm diam}(B)$ is the diameter of the hexagons in the hexagonal reconstruction mesh \eqref{eq:chiB}, $\beta$ is the probing scalar(s) in the semidefiniteness test, and $\mu$ is the regularization parameter \eqref{eq:reg-param-num}.
\end{figure}

\end{example}

\begin{example}\label{ex:3}

In this example we verify that, in principle, Algorithm~2 can provide reasonable reconstructions from real-life measurement data. The measurement data are gathered at the laboratory of the Applied Physics Department at University of Eastern Finland in Kuopio. The test object consists of a cylinder tank filled with regular tap water and iron objects. There are 16 rectangular electrodes on the lateral surface of the tank, and they are homogeneous along the symmetry axis of the tank. Moreover, the water surface is set along the top edge of the electrodes. Thus, the measurement geometry is essentially two-dimensional. The radius of the tank cross-section is 14.0 cm, and the electrode width and height are 2.5 cm and 7.0 cm, respectively. The measurements were performed using a dipole current basis $I^{(m)} = ({\rm e}^{(1)} - {\rm e}^{(m+1)}) \times {\rm 1.0 \ mA}$, $m=1,2,\ldots,k-1$.

\begin{figure}[tbh]
\begin{center}
\begin{tabular}{cc}
{\bf Target}&
{\bf Reconstrution~2} (CEM)\\[0.3cm]
\includegraphics[width=0.4\textwidth]{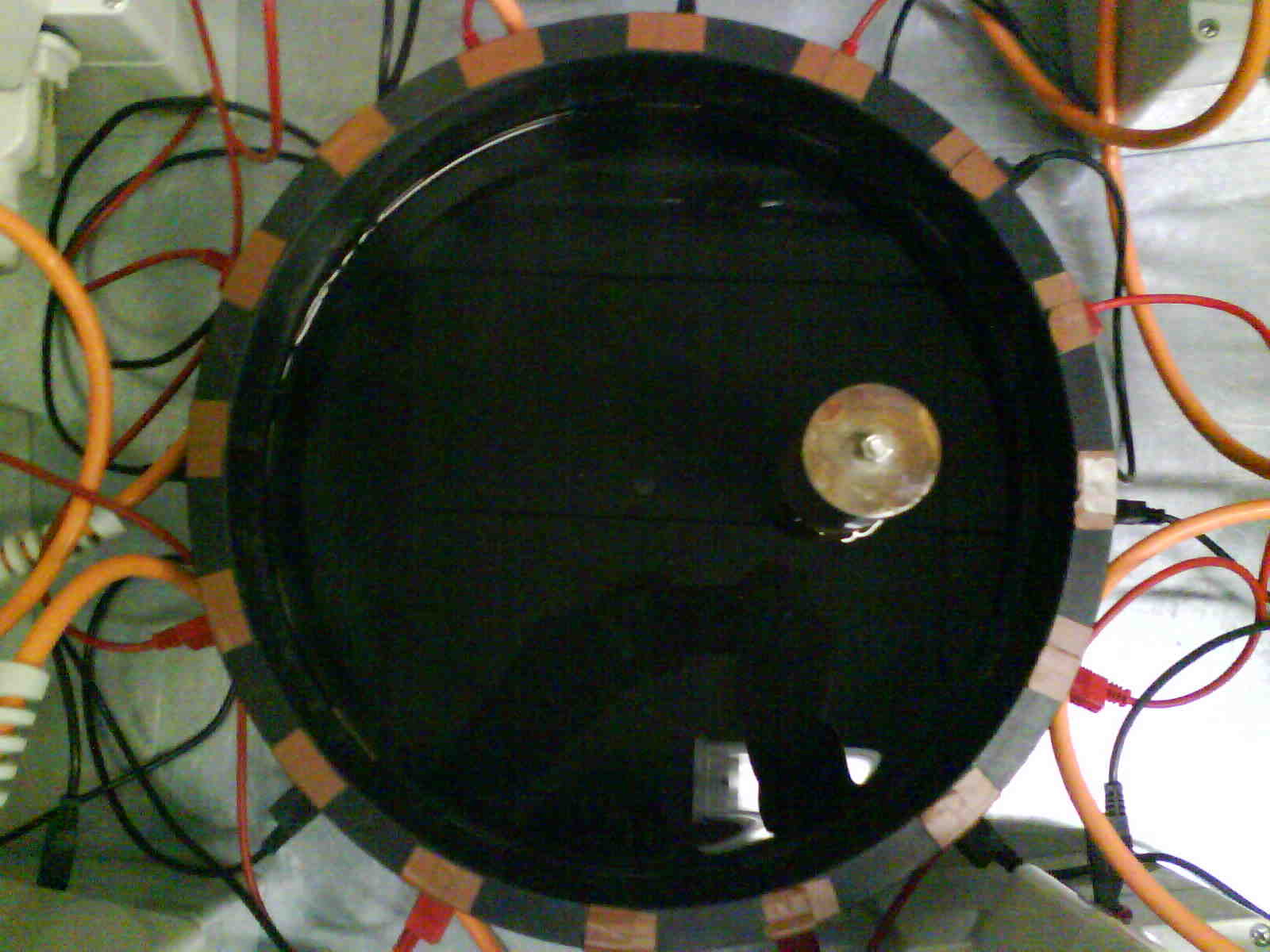}& 
\raisebox{0.3cm}{\includegraphics[width=0.3\textwidth]{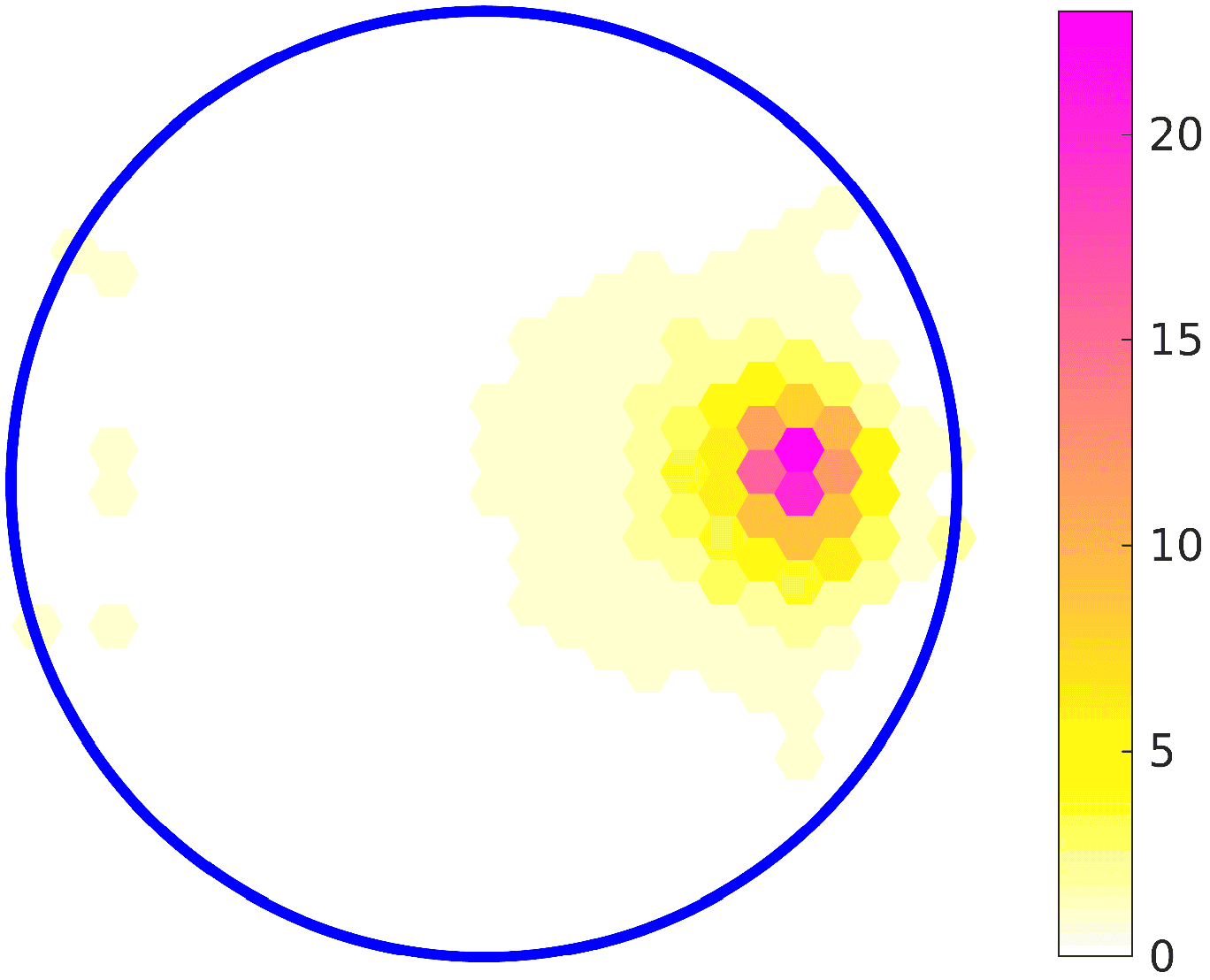}}
\\
\includegraphics[width=0.4\textwidth]{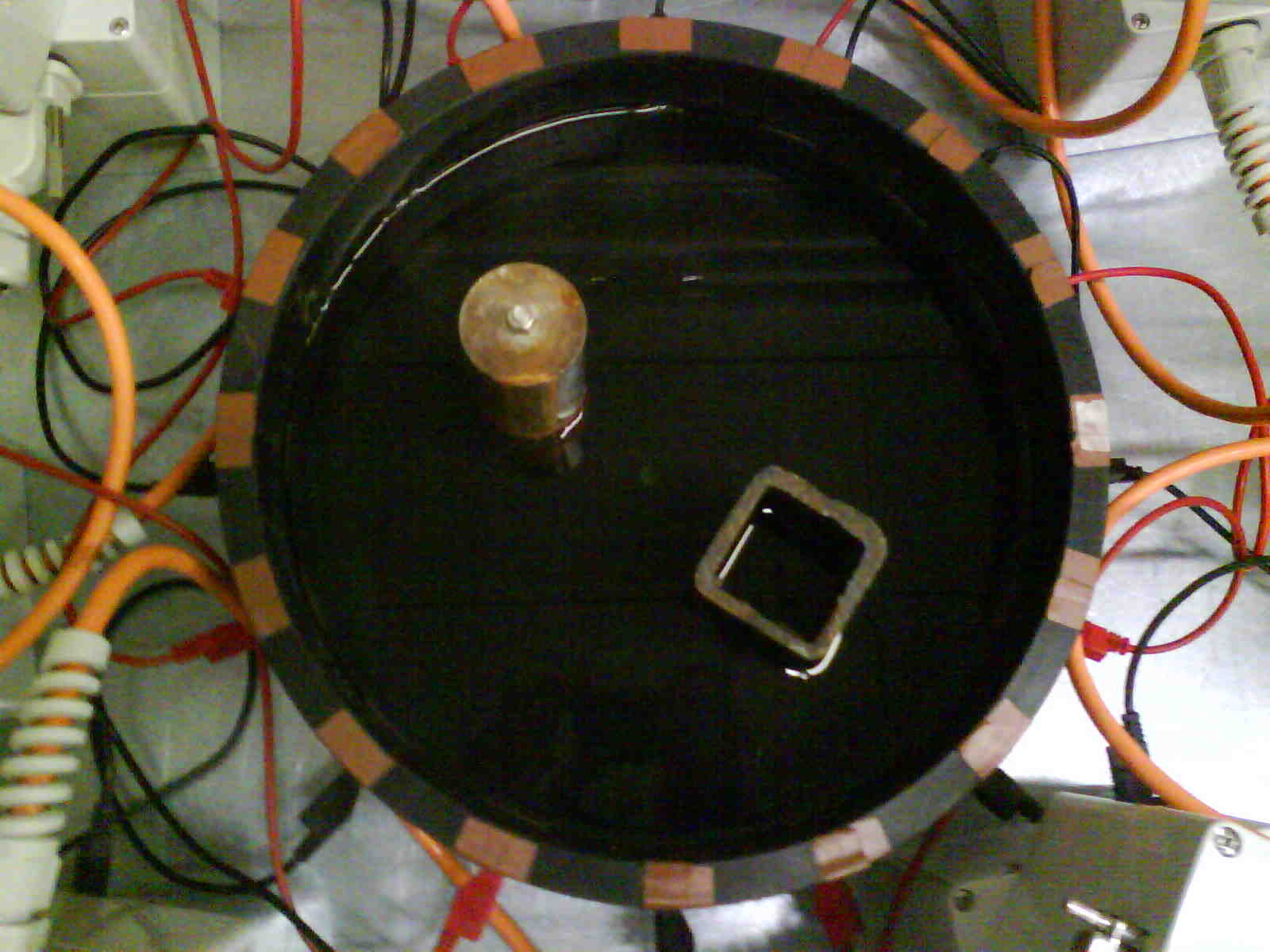}&
\raisebox{0.3cm}{\includegraphics[width=0.3\textwidth]{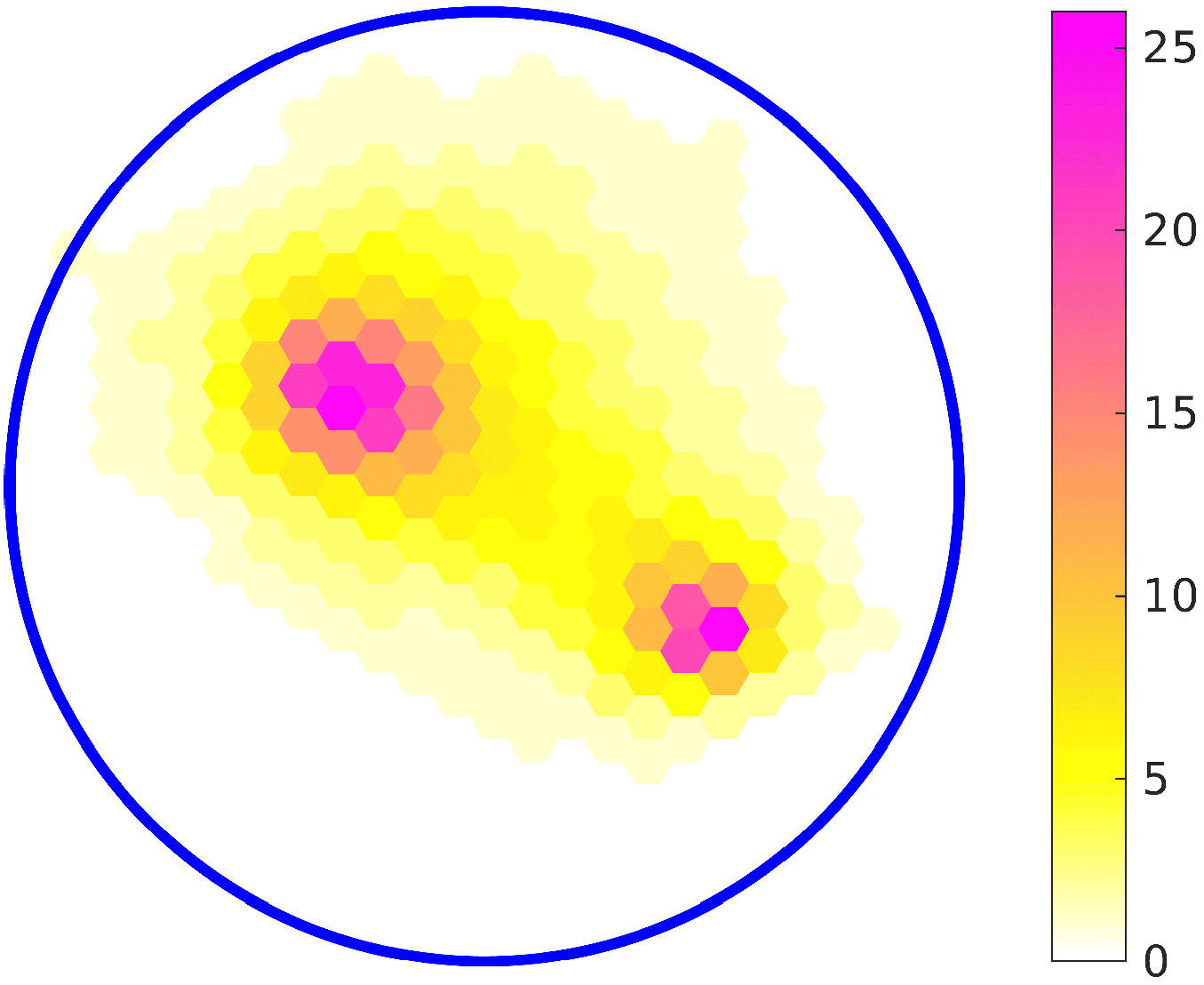}}
\end{tabular}
\end{center}
\caption{\label{fig:real}Reconstructions from water tank measurement data computed using a planar computational model and Algorithm~\ref{alg:2}. The tank is cylindrically symmetric with cross-sectional radius 14 cm. The measurements are done using 16 identical rectangular equispaced electrodes of 2.5 cm width and 7 cm height, and the depth of the water is 7 cm. For more information on the FE mesh, see the beginning of section~\ref{sec:numerical}.}

\begin{center}
\def\arraystretch{1.3}
\begin{tabular}{cc}
\hline
 Parameter & {\bf Reconstruction 2} \\\hline
 ${\rm diam}(B)$ & $0.093$ \\
 $\beta$ & $0.1 + 0.1\N$ \\
 $\mu$ & $1.02$ \\\hline
\end{tabular}
\end{center}
{\bf Tbl. 4}\, \  Parameter values used in the computations; ${\rm diam}(B)$ is the diameter of the hexagons in the hexagonal reconstruction mesh \eqref{eq:chiB}, $\beta$ is the probing scalar(s) in the semidefiniteness test, and $\mu$ is the regularization parameter \eqref{eq:reg-param-num}.
\end{figure}

The results obtained with a planar computational model are presented in Figure~\ref{fig:real}. Both reconstructions were computed using Algorithm~2 with the trial-and-error estimated values 
\[
\gamma_0 = 0.0243 \, {\rm S/m}, \ \ z = 0.005 \, {\rm m^2/S}
\] 
for the background conductivity and contact impedance, respectively.

It was observed that the reconstruction procedure is very sensitive with respect to the values of $\gamma_0$ and $z$ --- a few percent perturbation in their values was enough to ruin the whole reconstruction. This is not surprising as the measurement operator (when interpreted as a function of the conductivity and the contact resistance) satisfies 
\begin{equation}\label{eq:Rc}
\frac{1}{c}R(\gamma_0,z) = R(c\gamma_0,z/c)
\end{equation}
for any constant $c > 0$. By \eqref{eq:Rc}, a few percent error on $\gamma_0$ and $z$ can cause a few percent error on the measurement operator. Due to ill-posedness, such error levels are enough to suppress the signal entirely. However, the results show that with sufficiently good estimates for $\gamma_0$ and $z$, the proposed method can yield reconstructions that are comparable in quality to virtually any existing EIT reconstruction method. 

\end{example}

\begin{example}\label{ex:4}

The final example is a simulated three-dimensional example. The object is a unit ball, and there are $k=32$ approximately equidistantly placed electrodes which all are spherical caps of radius 0.1. An orthonormal current basis $\{I^{(m)}\}_{m=1}^{k-1}$, defined by
\begin{equation}\label{eq:I-3d}
	I^{(m)}_j = \begin{cases}
		\sqrt{\frac{1}{m(m+1)}} & j = 1,2,\dots,m, \\
		-\sqrt{\frac{m}{m+1}} & j=m+1, \\
		0 & j=m+2,m+3,\dots, k,
	\end{cases}
\end{equation}
is used. Note that \eqref{eq:I-3d} is the Gram-Schmidt orthonormalization of the standard $e^{(1)}-e^{(m+1)}$ basis used in many measurement setups, including the setup in Example \ref{ex:3}.

The results are shown in Figure~\ref{fig:three-dim}; for the ease of presentation only Algorithm~1 is considered. It is observed that, with suitable choices of regularization parameters, the algorithm can separate the reconstructed inclusions. We stress that, after precomputing an approximation of $R'(\gamma_0)$ and fixing the $\chi_B$'s, the implementation of the method independent of the spatial dimension. 

\begin{figure}[tbh]
\begin{center}
\begin{tabular}{ll}
\hspace{1.3cm}{\scriptsize {\bf Target}} & \hspace{0.7cm}{\scriptsize {\bf Noiseless~1} (CEM)}\\ 
\includegraphics[width=0.4\textwidth, trim = 4cm 3.5cm 1cm 3cm, clip=true]{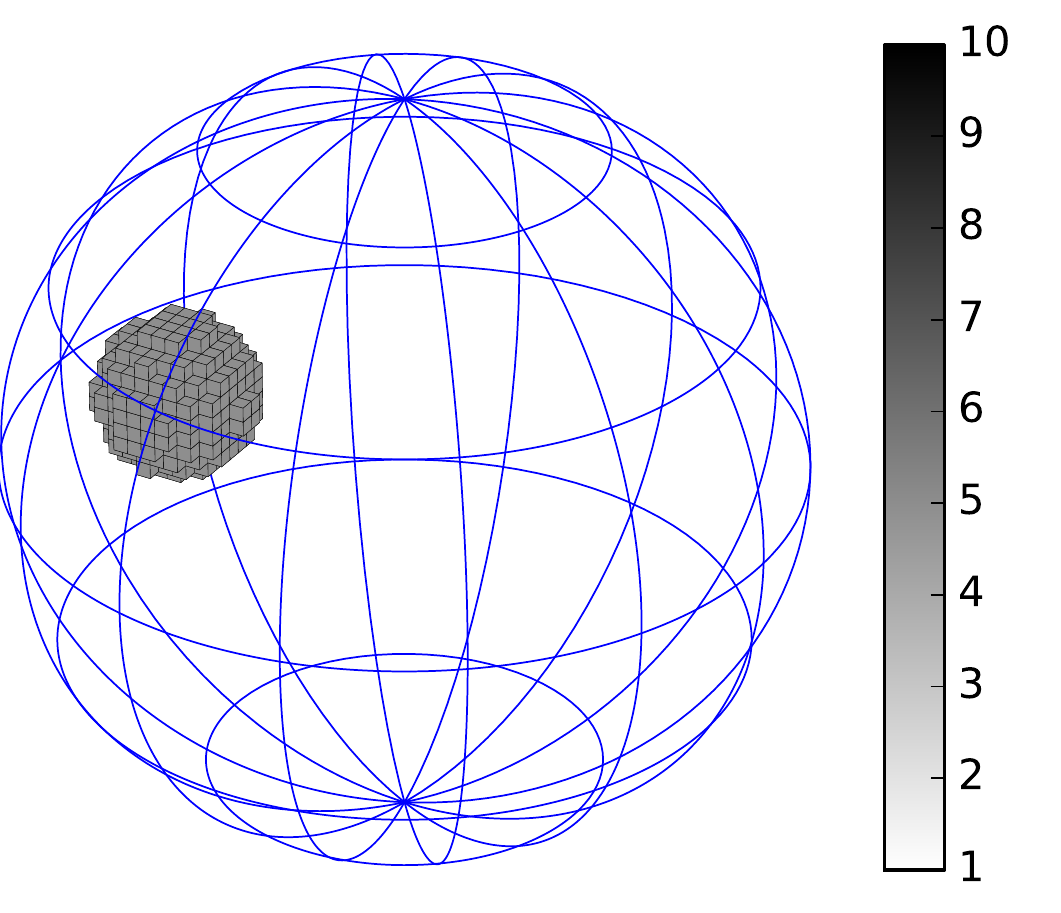}& 
\includegraphics[width=0.4\textwidth, trim = 4cm 3.5cm 1cm 3cm, clip=true]{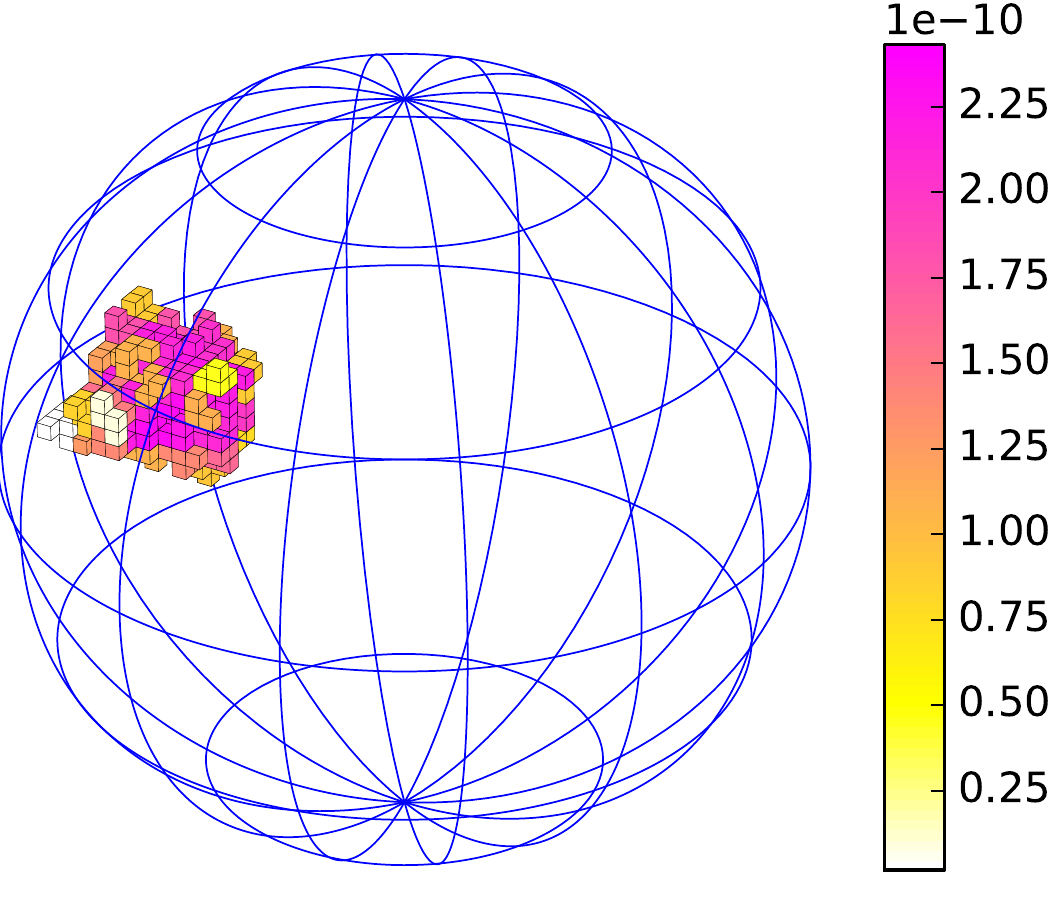}
\\
\includegraphics[width=0.4\textwidth, trim = 4cm 3.5cm 1cm 3cm, clip=true]{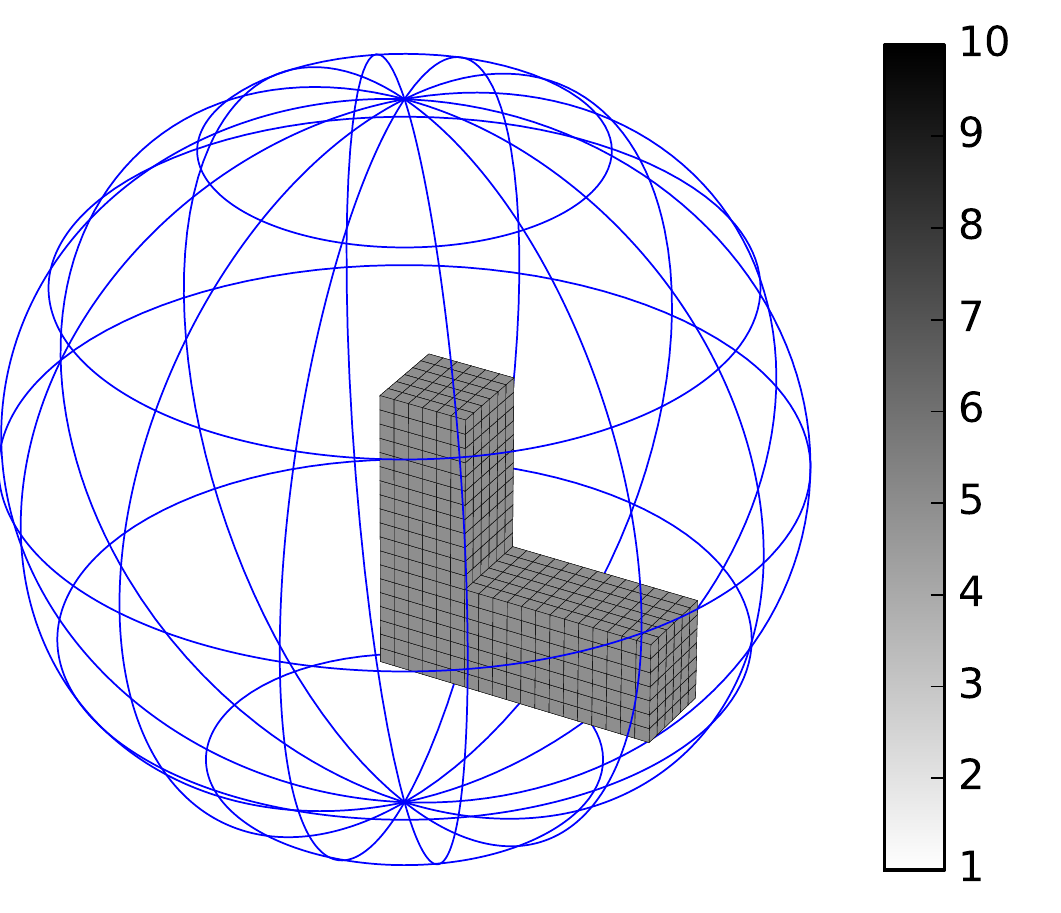} & 
\includegraphics[width=0.4\textwidth, trim = 4cm 3.5cm 1cm 3cm, clip=true]{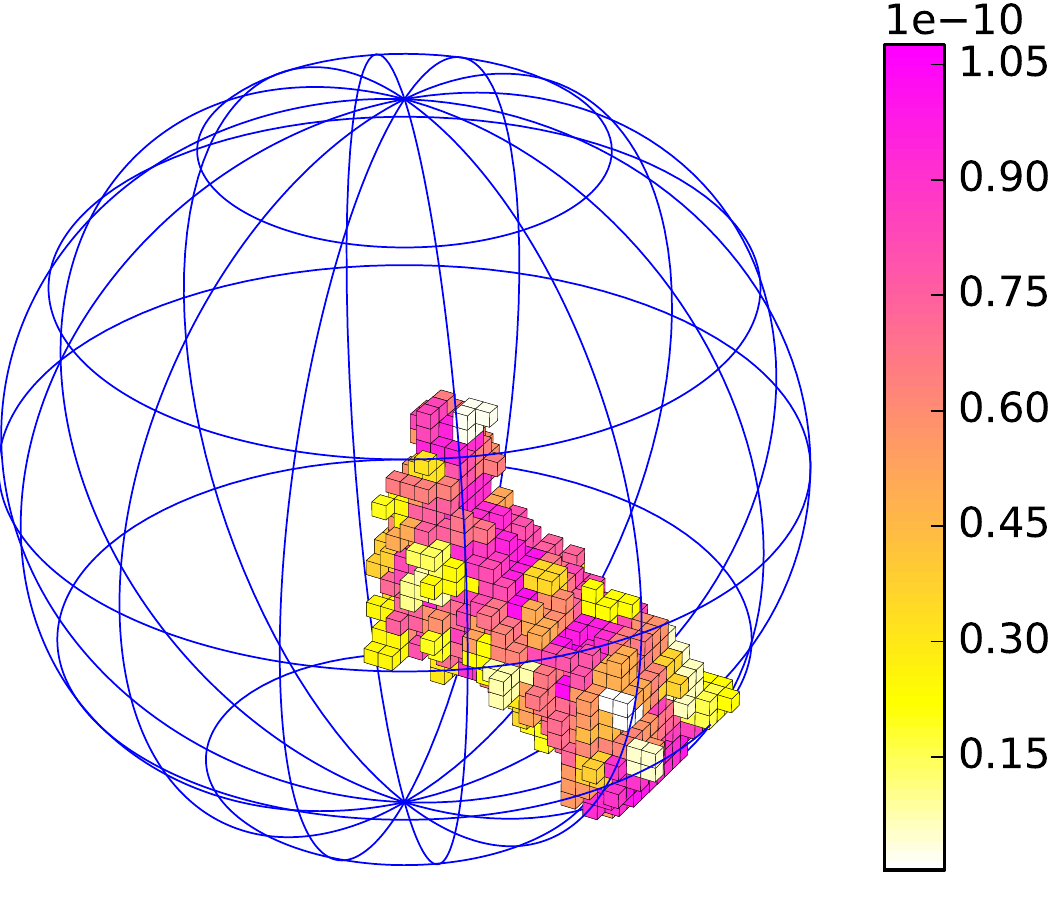}
\\
\includegraphics[width=0.4\textwidth, trim = 4cm 3.5cm 1cm 3cm, clip=true]{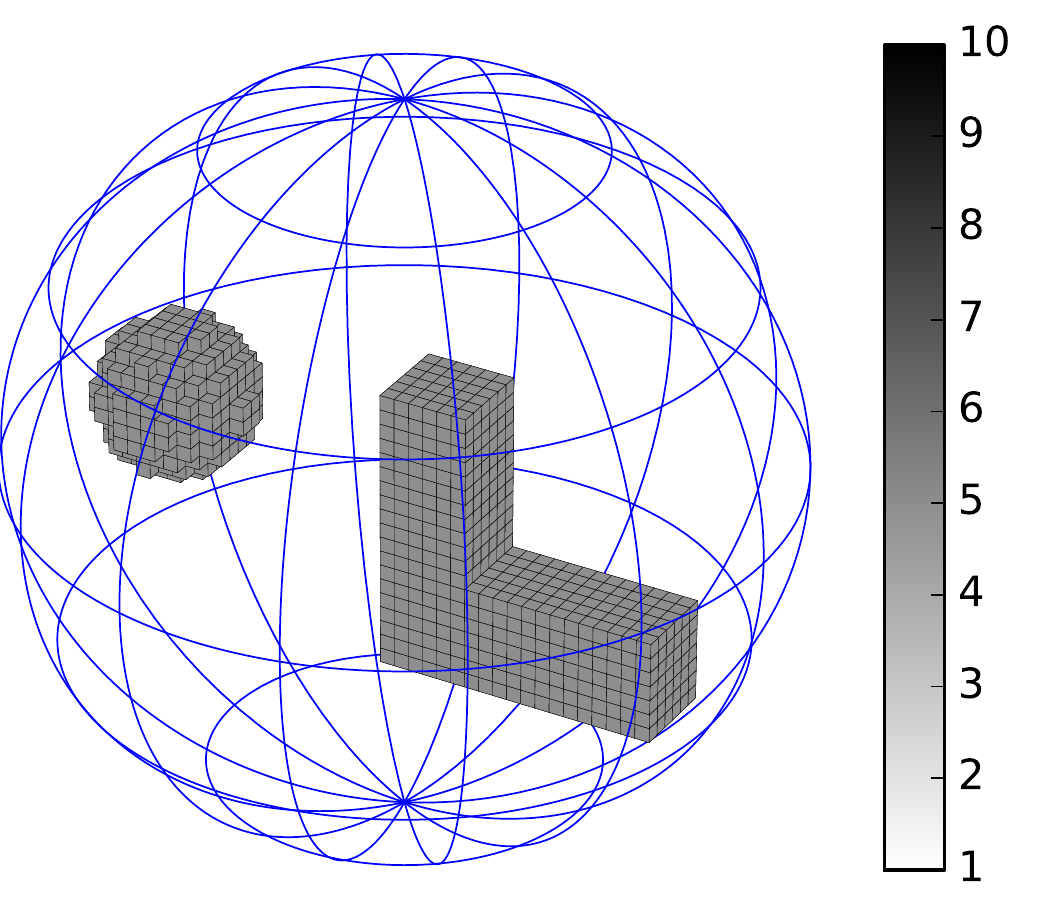} & 
\includegraphics[width=0.4\textwidth, trim = 4cm 3.5cm 1cm 3cm, clip=true]{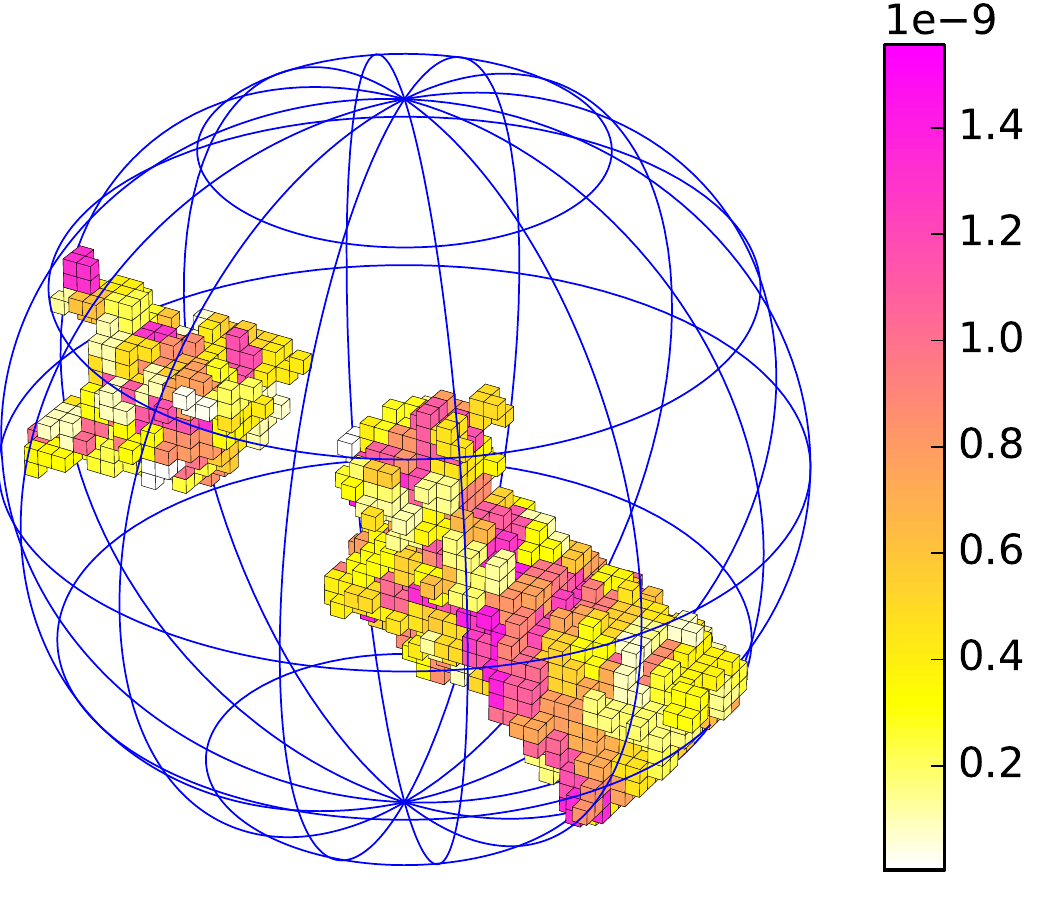}
\end{tabular}
\caption{\label{fig:three-dim} Three-dimensional reconstructions from simulated noise-free measurements with $32$ electrodes. The test sets $B$ are cubes. Reconstructions in the table column ``{\bf Noiseless 1}'' are calculated using Algorithm~1. For more information on the FE mesh, see the beginning of section~\ref{sec:numerical}.}
\vspace{0.4cm}
\hspace{-1.2cm}
\def\arraystretch{1.3}
\begin{tabular}{l}
\\[.07cm]
\hspace{0.54cm}
$
{\rm Top}
\left\{
\begin{array}{c}
\\ 
\\
\\
\end{array}
\right.
$
\\
\hspace{0.14cm}
$
{\rm Middle}
\left\{
\begin{array}{c}
\\ 
\\
\\
\end{array}
\right.
$
\\
\hspace{0.08cm}
$
{\rm Bottom}
\left\{
\begin{array}{c}
\\ 
\\
\\
\end{array}
\right.
$
\end{tabular}
\hspace{-0.4cm}
\begin{tabular}{cc}
\hline
 Parameter & {\bf Noiseless 1} \\\hline
 ${\rm diam}(B)$ & $0.069$ \\
 $\beta$ & $0.8$ \\
 $\mu$ & $1.023$ \\\hdashline[2pt/2pt]
  ${\rm diam}(B)$ & $0.069$ \\
 $\beta$ & $0.8$ \\
 $\mu$ & $1.002$ \\\hdashline[2pt/2pt]
  ${\rm diam}(B)$ & $0.069$ \\
 $\beta$ & $0.8$ \\
 $\mu$ & $0.9966$ \\\hline

\end{tabular}
\end{center}

{\bf Tbl. 5}\, \  Parameter values used in the computations; ${\rm diam}(B)$ is the diameter of the voxels in the reconstruction mesh \eqref{eq:chiB}, $\beta$ is the probing scalar(s) in the semidefiniteness test, and $\mu$ is the regularization parameter \eqref{eq:reg-param-num}.
\end{figure}

\end{example}

\section{Conclusions}\label{sec:conclusions}

We have extended previous works on the regularization analysis as well as the implementation of the monotonicity method. The leading idea of this reconstruction technique is to perform semidefiniteness tests on certain linear combinations of (noisy and discrete) current-to-voltage operators. We have proven that, as a suitably chosen sequence of regularization parameters tends to zero, the approximative test criterion converges uniformly to the idealistic one. Moreover, we rigorously justified the use of the CEM as an approximate model. 

To complement the theoretical study, two reconstruction algorithms were formulated and implemented. Numerical examples were carried out using both simulated CEM data and real-life measurement data. The tests indicate that the monotonicity method can very efficiently provide relatively good images on conductivity inhomogeneities if the homogeneous background conductivity and the electrode contact resistances are sufficiently accurately known. 

\begin{acknowledgements}
Henrik Garde is supported by advanced grant no. 291405 \emph{HD-Tomo} from the European Research Council. Stratos Staboulis is supported by grant no. 4002-00123 \emph{Improved Impedance Tomography with Hybrid Data} from The Danish Council for Independent Research | Natural Sciences. 

The authors are grateful to Professor Jari Kaipio's research group at the University of Eastern Finland (Kuopio) for granting us access to their EIT devices. We thank Marcel Ullrich at University of Stuttgart for his valuable insight on the implementation details of the presented method.
\end{acknowledgements}

\bibliographystyle{spmpsci}      
\bibliography{../mybib}   

\appendix
\section{Appendix: a lemma on the convergence of infima/suprema} \label{append:A}

\begin{lemma}\label{lem:A-1}
Let $J$ be an arbitrary index set and $\{a_j\}_{j\in J}, \{a_j(h)\}_{j\in J} \subset \R$, $h > 0$, be sequences such that $\inf_{j\in J}a_j>-\infty$ and 
\begin{equation*}
\lim_{h\to 0}\sup_{j\in J}|a_j - a_j(h)| = 0. 
\end{equation*}
Denoting $ a := \inf_{j\in J} a_j$ and $a(h) := \inf_{j\in J} a_j(h)$ we have
\begin{equation}\label{eq:A-lim}
\lim_{h\to 0} a(h) = a.
\end{equation}
\end{lemma}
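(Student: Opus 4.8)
The plan is a direct $\veps$-argument exploiting the fact that adding a constant to every member of a family shifts the infimum by that constant, and that inequalities valid term-by-term pass to the infimum. First I would fix an arbitrary $\veps > 0$ and, using the hypothesis $\lim_{h\to 0}\sup_{j\in J}|a_j - a_j(h)| = 0$, choose $h_0 > 0$ so that
\[
\sup_{j\in J}|a_j - a_j(h)| \leq \veps \qquad \text{for all } h \in (0,h_0].
\]
For such $h$ this gives, for every $j \in J$, the two-sided bound $a_j - \veps \leq a_j(h) \leq a_j + \veps$.

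Next I would take the infimum over $j$ in each of these inequalities separately. From $a_j(h) \leq a_j + \veps$ for all $j$ one obtains $a(h) = \inf_{j\in J} a_j(h) \leq \inf_{j\in J}(a_j+\veps) = a + \veps$. From $a_j(h) \geq a_j - \veps \geq a - \veps$ for all $j$ (using $a \leq a_j$) one obtains $a(h) \geq a - \veps$; in particular $a(h) > -\infty$, so $a(h)$ is a well-defined real number. Combining the two bounds yields $|a(h) - a| \leq \veps$ for all $h \in (0,h_0]$, which is exactly \eqref{eq:A-lim}.

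I do not expect any genuine obstacle here: the only point requiring a word of care is the a priori possibility that $a(h) = -\infty$, which is ruled out by the uniform bound together with $\inf_{j\in J} a_j > -\infty$, so that the manipulation $\inf_{j}(a_j + \veps) = a + \veps$ is legitimate. The same reasoning applies verbatim to suprema (replacing infima by suprema and the hypothesis by $\sup_{j\in J} a_j < +\infty$), which is why the lemma is stated as a statement about both.
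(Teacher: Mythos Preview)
Your argument is correct and in fact more economical than the paper's. The paper proceeds in two stages: it first shows that $\{a(h)\}_{h>0}$ is Cauchy (by picking near-minimizing sequences $\{j(k)\}$ and using $\liminf$), and only afterwards identifies the limit as $a$ by choosing near-minimizers $j_\veps$ of $\{a_j\}$ and $j_\veps'$ of $\{a_j(h)\}$. You bypass the Cauchy step entirely by exploiting the elementary fact that a termwise inequality $a_j(h) \le a_j + \veps$ passes directly to the infimum, yielding $|a(h)-a|\le\veps$ in one stroke. The paper's detour through Cauchy-ness is logically sound but redundant once one has your direct bound; conversely, the paper's approach makes the existence of the limit explicit before computing it, which some readers may find conceptually cleaner, but it is not needed here. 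Your remark that $a(h)>-\infty$ follows from the uniform bound and $a>-\infty$ is a point the paper leaves implicit.
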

\begin{proof}
Let us first show that the limit in \eqref{eq:A-lim} exists. Given an arbitrary $\veps > 0$, there exists an $h_{\veps} > 0$ such that $\sup_{j\in J}|a_j - a_j(h)| \leq \veps/2$ for all $h \in (0,h_{\veps})$. Let $h, h' \in (0,h_{\veps})$ then $\sup_{j\in J}|a_j(h)-a_j(h')|\leq \veps$, and fix a sequence $\{j(k)\}_{k=1}^\infty \subseteq J$ such that $ a_{j(k)}(h)$ converges to $a(h)$. Hence
\[
a(h') \leq \liminf_{k\to\infty} a_{j(k)}(h') \leq \liminf_{k\to\infty} a_{j(k)}(h) + \veps = a(h) + \veps.
\]
By symmetry with respect to $h$ and $h'$, it follows that $\{a(h)\}_{h>0}$ is a Cauchy sequence. 

It still remains to show that the limit coincides with $a$. For any $\veps > 0$, there exists $j_{\veps}\in J$ and $h_{\veps} > 0$ such that $|a_{j_{\veps}} - a|\leq \veps/2$ and $\sup_{j\in J}|a_j - a_j(h)| \leq \veps/2$ for $h\in(0,h_{\veps})$, respectively. Thus for $h\in(0,h_{\veps})$
\[
a(h) \leq a_{j_{\veps}}(h) \leq a_{j_{\veps}} + \veps/2 \leq a + \veps.
\]
For $h\in(0,h_{\veps})$ pick $j_{\veps}'$ such that $|a_{j_{\veps}'}(h)-a(h)|\leq \veps/2$ then
\[
	a\leq a_{j_{\veps}'} \leq a_{j_{\veps}'}(h) + \veps/2 \leq a(h)+\veps.
\]
Altogether we have shown for any $\veps > 0$ that $|a(h) - a| \leq \veps$ for $h\in (0,h_{\veps})$. \qed
\end{proof}

\section{Appendix: linearization of the CEM and the CM}\label{append:B}
\begin{proposition}\label{prop:fre}
The operators $\Lambda(\gamma) \in \Ls(L_\diamond^2(\partial\Omega))$ and $R(\gamma)\in \Ls(\R_\diamond^k)$ are analytic in $\gamma\in L_+^\infty(\Omega)$. In particular, they are infinitely many times Fr\'echet differentiable. Furthermore, if $\eta$ is compactly supported in $\Omega$, then the boundary value problems
\begin{align}
&
\left\{
\begin{array}{ll}
\label{eq:fre-cm}
\displaystyle{\nabla\cdot(\gamma\nabla u') = -\nabla\cdot(\eta \nabla u ) \quad}  &{\rm in}\;\; \Omega, \\[5pt] 
{\displaystyle{\nu\cdot\gamma\nabla u'} = 0 }\quad &{\rm on}\;\;\partial\Omega,
\end{array} 
\right.
\\[0.1cm]
&
\left\{
\begin{array}{ll}
\label{eq:fre-cem}
\displaystyle{\nabla\cdot(\gamma\nabla v') = -\nabla\cdot(\eta\nabla v) \quad}  &{\rm in}\;\; \Omega, \\[5pt] 
{\displaystyle{\nu\cdot\gamma\nabla v'} = 0 }\quad &{\rm on}\;\;{\partial\Omega}\setminus \bigcup_{j=1}^k \overbar{E_j},\\[5pt] 
{\displaystyle v' + z{\nu\cdot\gamma\nabla v'} = V'_j } \quad &{\rm on}\;\; E_j, \\ 
{\displaystyle \int_{E_j}\nu\cdot\gamma\nabla v'\, dS = 0}, \quad & j=1,2,\ldots k, \\[2pt]
\end{array}
\right.
\end{align} 
uniquely determine the Fr\'echet derivatives via
\[
\Lambda'(\gamma)\eta = u'|_{\partial\Omega}, \quad R'(\gamma)\eta = V',
\]
respectively. Above $u$ and $(v,V)$ are the unique weak solutions of \eqref{eq:cm} and \eqref{eq:cem}, respectively.
\end{proposition}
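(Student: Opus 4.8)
The plan is to express both $\Lambda(\gamma)$ and $R(\gamma)$ as compositions of \emph{fixed} bounded linear maps with the inverse of a $\gamma$-dependent isomorphism that depends affinely on $\gamma$, and to deduce analyticity from the analyticity of operator inversion. For the CM, let $\mathcal{A}(\gamma)\in\Ls(H^1_\diamond(\Omega),H^1_\diamond(\Omega)^\ast)$ be the operator associated with the bilinear form $(u,w)\mapsto\int_\Omega\gamma\,\nabla u\cdot\nabla w\,dx$, and let $\iota\in\Ls(L^2_\diamond(\partial\Omega),H^1_\diamond(\Omega)^\ast)$ be the fixed operator defined by $(\iota f)(w):=\langle f,w|_{\partial\Omega}\rangle$; then the weak form of \eqref{eq:cm} reads $\mathcal{A}(\gamma)u=\iota f$, so $\Lambda(\gamma)=\tau\,\mathcal{A}(\gamma)^{-1}\iota$, where $\tau\colon H^1_\diamond(\Omega)\to L^2_\diamond(\partial\Omega)$ is the (fixed) trace map, whose range already lies in $L^2_\diamond(\partial\Omega)$ by the definition of $H^1_\diamond(\Omega)$. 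For the CEM one proceeds identically in the product space $H^1(\Omega)\oplus\R_\diamond^k$ with the form $(v,V;w,W)\mapsto\int_\Omega\gamma\,\nabla v\cdot\nabla w\,dx+\sum_{j=1}^k z_j^{-1}\int_{E_j}(v-V_j)(w-W_j)\,dS$, obtaining $R(\gamma)=\pi\,\mathcal{A}(\gamma)^{-1}\iota_k$ with $\pi$ the projection onto the $\R_\diamond^k$-component and $(\iota_k I)(w,W):=I\cdot W$. In both cases $\gamma\mapsto\mathcal{A}(\gamma)$ is affine-linear and bounded from $L^\infty(\Omega)$ into the pertinent operator space, hence entire, while $\mathcal{A}(\gamma)$ is boundedly invertible for every $\gamma\in L_+^\infty(\Omega)$ by the Lax--Milgram theorem, using the coercivity of each form (uniform on a neighbourhood of $\gamma$, which is admissible since $L_+^\infty(\Omega)$ is open in $L^\infty(\Omega)$). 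Because inversion is analytic on the open set of invertible operators (locally a convergent Neumann series), $\gamma\mapsto\mathcal{A}(\gamma)^{-1}$ is analytic on $L_+^\infty(\Omega)$; composing with the fixed maps $\tau,\iota$ and $\pi,\iota_k$ yields the analyticity of $\Lambda$ and $R$, and in particular their infinite Fréchet differentiability.

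To identify the first derivative I would differentiate $\mathcal{A}(\gamma)^{-1}$: since $\mathcal{A}$ is affine-linear, $\mathcal{A}'(\gamma)\eta$ is the operator induced by the form $(u,w)\mapsto\int_\Omega\eta\,\nabla u\cdot\nabla w\,dx$, and the chain rule gives $\Lambda'(\gamma)\eta=-\tau\,\mathcal{A}(\gamma)^{-1}\big(\mathcal{A}'(\gamma)\eta\big)\mathcal{A}(\gamma)^{-1}\iota$. Unwinding this on a fixed $f$: $\mathcal{A}(\gamma)^{-1}\iota f=u$ solves \eqref{eq:cm}, and $\mathcal{A}(\gamma)^{-1}$ applied to $-(\mathcal{A}'(\gamma)\eta)u$ is, by definition, the unique $u'\in H^1_\diamond(\Omega)$ with $\int_\Omega\gamma\,\nabla u'\cdot\nabla w\,dx=-\int_\Omega\eta\,\nabla u\cdot\nabla w\,dx$ for all $w$, whence $\Lambda'(\gamma)\eta\,f=u'|_{\partial\Omega}$. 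That variational identity is precisely the weak form of \eqref{eq:fre-cm}, and this is exactly where the hypothesis ${\rm supp}\,\eta\subseteq\Omega$ compactly enters: integrating by parts, the natural boundary condition attached to the identity is $\nu\cdot\gamma\nabla u'=-\nu\cdot\eta\nabla u$ on $\partial\Omega$, which collapses to the homogeneous Neumann condition of \eqref{eq:fre-cm} exactly because $\eta$ vanishes near $\partial\Omega$. The same computation for the CEM form produces the weak form of \eqref{eq:fre-cem}; differentiating the electrode constraints $\int_{E_j}\nu\cdot\gamma\nabla v\,dS=I_j$ with $I$ held fixed gives the zero-net-current conditions $\int_{E_j}\nu\cdot\gamma\nabla v'\,dS=0$, while the conditions $v'+z_j\,\nu\cdot\gamma\nabla v'=V_j'$ fall out of the electrode boundary term. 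Unique solvability of \eqref{eq:fre-cm} and \eqref{eq:fre-cem} is then one more application of Lax--Milgram, the functional $w\mapsto-\int_\Omega\eta\,\nabla u\cdot\nabla w\,dx$ being bounded on the relevant energy space. Finally, the same machinery applied to the higher derivatives of $\mathcal{A}(\gamma)^{-1}$ (finite alternating products of $\mathcal{A}(\gamma)^{-1}$ and the $\mathcal{A}'(\gamma)\eta_i$) yields the recursively defined sensitivity problems of arbitrary order invoked in Remark~\ref{rem:cem-cm-lin}.

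I expect the analyticity itself to be essentially soft once the operator formulation is in place; the genuinely delicate step is the CEM bookkeeping --- setting up the correct product-space functional-analytic framework (including the ground-level normalisation encoded by $\R_\diamond^k$), checking uniform coercivity of its bilinear form on a neighbourhood of $\gamma$ in $L_+^\infty(\Omega)$, and matching term by term the weak identity obtained by differentiation with the strong formulation \eqref{eq:fre-cem} --- in particular verifying that the compact-support assumption on $\eta$ is precisely what renders the perturbed boundary and electrode conditions homogeneous.
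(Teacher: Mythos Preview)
Your proposal is correct, and the underlying mechanism --- affine dependence of the sesquilinear form on $\gamma$ together with the Neumann-series analyticity of operator inversion --- is exactly the one the paper uses. The only real difference is packaging: you work at the level of the full energy-space isomorphism $\mathcal{A}(\gamma)$ and obtain the Taylor expansion of $\mathcal{A}(\gamma)^{-1}$ abstractly, then sandwich with the fixed trace/projection maps; the paper instead introduces the concrete perturbation operator $D(\eta)\colon V\mapsto V'$ directly on the measurement space, derives the fixed-point identity $\tilde V(\eta)=D(\eta)\tilde V(\eta)+D(\eta)V(\gamma)$, and reads off the power series $V(\gamma+\eta)=\sum_{m\ge 0}D(\eta)^m V(\gamma)$. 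Your formulation is arguably cleaner (it avoids the slight awkwardness that the paper's $D(\eta)$ is declared as a map on $\R_\diamond^k$ while its definition implicitly uses the full interior solution), and it makes the role of compact support of $\eta$ equally transparent; the paper's version, on the other hand, immediately exhibits the higher-order sensitivity problems as iterated applications of $D(\eta)$ without further unwinding. Substantively the two arguments coincide.
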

\begin{proof}
For clarity, we only consider the CEM case as the CM can be handled analogously \cite{Calder'on1980}. Given $(v,V)\in H^1(\Omega)\oplus \R_\diamond^k$ and $\eta\in L_+^\infty(\Omega)$, the variational problem 
\begin{equation}\label{eq:fre-cem-weak}
\int_\Omega \gamma\nabla v' \cdot \nabla w \, dx + \sum_{j=1}^k \int_{E_j}\frac{1}{z}(v'-V_j')(w-W_j)\,dS = -\int_\Omega \eta \nabla v \cdot \nabla w \,dx
\end{equation}
for all $(w,W)\in H^1(\Omega) \oplus \R_\diamond^k$,
is uniquely solvable. Moreover, if $(v,V)$ weakly solves \eqref{eq:cem}, then $V' = R'(\gamma)I$ \cite{Lechleiter2008}. Clearly, if $\eta$ is compactly supported, the right-hand side of \eqref{eq:fre-cem-weak} does not induce any boundary terms and hence $(v',V')$ satisfies \eqref{eq:fre-cem}. 

Define the mapping 
\[
D = D(\eta)\colon \R_\diamond^k \to \R_\diamond^k, \quad V \mapsto V'
\] 
as the solution operator to \eqref{eq:fre-cem-weak}. Consider the expansion
\[
V(\gamma + \eta) = V(\gamma) + \tilde{V}(\eta),
\]
where we denote $V(\gamma) = R(\gamma)I$ and $V(\gamma+\eta) = R(\gamma + \eta)I$. A direct calculation using the variational formulation with the associated internal potentials reveals
\[
\tilde{V}(\eta) = D(\eta)V(\gamma+\eta) = D(\eta)\tilde{V}(\eta) + D(\eta)V(\gamma).
\]
As $\|D(\eta)\| \leq C\|\eta\|_{L^\infty(\Omega)}$, the associated Neumann-series converges for small enough $\eta$. Consequently,
\begin{equation}\label{eq:taylor}
V(\gamma+\eta) = V(\gamma) + \tilde{V}(\eta) = V(\gamma) + (\id - D(\eta))^{-1} D(\eta)V(\gamma) = \sum_{m=0}^\infty D(\eta)^m V(\gamma).
\end{equation}
Different order Fr\'echet derivatives can be inductively derived using \eqref{eq:taylor} and the fact that $D(\eta)$ is linear. \qed


\end{proof}

\end{document}